\documentclass[english,oneside]{smfart}

\newcommand{\opn}[1]{\operatorname{#1}}

\usepackage{amsthm,mathtools,amssymb,textcomp,extarrows,bm,mleftright,graphicx,stmaryrd,scalerel,tikz,enumitem,float,booktabs,fontspec,mathspec}
\usepackage[dvipsnames]{xcolor}
\DeclareMathAlphabet{\mathbbb}{U}{bbold}{m}{n}

\DeclareMathOperator*{\sumbb}{\sum}
\DeclareMathOperator*{\sumtt}{\sum}
\DeclareMathOperator*{\prodtt}{\prod}

\usetikzlibrary{cd}
\mleftright

\makeatletter
\AtBeginDocument{%
 \let\glb@currsize\relax
}
\makeatother

\usepackage{mathrsfs}

\ExplSyntaxOn
\NewDocumentCommand{\definealphabet}{mmmm}
 {%
  \int_step_inline:nnn { `#3 } { `#4 }
   {
    \cs_new_protected:cpx { #1 \char_generate:nn { ##1 }{ 11 } }
     {
      \exp_not:N #2 { \char_generate:nn { ##1 } { 11 } }
     }
   }
 }
\ExplSyntaxOff
\definealphabet{bb}{\mathbb}{A}{Z}
\definealphabet{cal}{\mathcal}{A}{Z}
\definealphabet{frak}{\mathfrak}{A}{z}
\definealphabet{rm}{\mathrm}{A}{z}
\definealphabet{bf}{\mathbf}{A}{z}
\definealphabet{scr}{\mathscr}{A}{Z}
\definealphabet{tt}{\mathtt}{A}{Z}

\newcommand{\lto}{\longrightarrow}

\newcommand{\wtilde}{\widetilde}

\newfontface\logix {logix.otf}[Scale=1.0,NFSSFamily=logix]
\DeclareSymbolFont{Logix}{TU}{logix}{m}{n}
\newcommand{\lgl}[1]{\ensuremath{\Udelimiter 4 \symLogix "#1}}

\newcommand{\OpnDblParn}{\lgl{E31B}\relax}
\newcommand{\ClsDblParn}{\lgl{E35B}\relax}

\newcommand{\pparen}[1]{\left\OpnDblParn #1\right\ClsDblParn}

\newcommand{\uld}{\underline{d}}
\newcommand{\normd}{\lVert\uld\rVert}
\newcommand{\norm}[1]{\left\lVert#1\right\rVert}
\usepackage[pdfencoding=auto,psdextra]{hyperref}
\usepackage[nameinlink]{cleveref}
\Crefname{theorem}{Theorem}{Theorem}
\Crefname{conjecture}{Conjecture}{Conjectures}
\Crefname{lemma}{Lemma}{Lemmas}
\Crefname{definition}{Definition}{Definitions}
\Crefname{remark}{Remark}{Remarks}
\Crefname{question}{Question}{Questions}
\Crefname{proposition}{Proposition}{Propositions}
\Crefname{corollary}{Corollary}{Corollaries}
\Crefname{equation}{}{}
\Crefname{item}{}{}
\Crefname{example}{Example}{Examples}
\Crefname{proof}{Proof}{Proofs}
\newlist{thmenum}{enumerate}{1}
\setlist[thmenum]{label=\arabic*., ref=\thetheorem~(\arabic*)}
\crefalias{thmenumi}{theorem}
\newlist{propenum}{enumerate}{1}
\setlist[propenum]{label=\arabic*., ref=\theproposition~(\arabic*)}
\crefalias{propenumi}{proposition}
\newlist{lemenum}{enumerate}{1}
\setlist[lemenum]{label=\arabic*., ref=\thelemma~(\arabic*)}
\crefalias{lemenumi}{lemma}

\AddToHook{env/theorem/before}{\addvspace{3pt}}
\AddToHook{env/theorem/after}{\addvspace{3pt}}
\AddToHook{env/example/before}{\addvspace{3pt}}
\AddToHook{env/example/after}{\addvspace{3pt}}
\AddToHook{env/definition/before}{\addvspace{3pt}}
\AddToHook{env/definition/after}{\addvspace{3pt}}
\AddToHook{env/proposition/before}{\addvspace{3pt}}
\AddToHook{env/proposition/after}{\addvspace{3pt}}
\AddToHook{env/lemma/before}{\addvspace{3pt}}
\AddToHook{env/lemma/after}{\addvspace{3pt}}
\AddToHook{env/remark/before}{\addvspace{3pt}}
\AddToHook{env/remark/after}{\addvspace{3pt}}
\AddToHook{env/corollary/before}{\addvspace{3pt}}
\AddToHook{env/corollary/after}{\addvspace{3pt}}

\usepackage[frozencache]{minted}
\newmintinline[lean]{lean4}{fontsize=\footnotesize,bgcolor=white}
\usepackage{tcolorbox}
\tcbuselibrary{listings, minted, skins}

\tcbset{listing engine=minted}
\newtcblisting{leancode}[1][]{listing only, minted language=lean4, minted style=tango,
  colback=white!87!Emerald, enhanced, frame hidden, minted options={
  fontsize=\footnotesize, tabsize=2, breaklines, autogobble,mathescape},top=0pt,bottom=0pt,left=0pt,right=0pt,#1}

\usepackage[url=true,backend=biber,style=ext-alphabetic,hyperref=true,giveninits=true]{biblatex}
\makeatletter
\renewcommand*\subjclass[2][2020]{%
  \def\@subjclass{#2}%
  \@ifundefined{subjclassname@#1}{%
    \ClassWarning{\@classname}{Unknown edition (#1) of Mathematics
      Subject Classification; using '2020'.}%
  }{%
    \@xp\let\@xp\subjclassname\csname subjclassname@#1\endcsname
  }%
}
\@namedef{subjclassname@1991}{%
  \textup{1991} Mathematics Subject Classification}
\@namedef{subjclassname@2000}{%
  \textup{2000} Mathematics Subject Classification}
\@namedef{subjclassname@2010}{%
  \textup{2010} Mathematics Subject Classification}
\@namedef{subjclassname@2020}{%
  \textup{2020} Mathematics Subject Classification}
\@xp\let\@xp\subjclassname\csname subjclassname@2020\endcsname
\makeatother

\newtheorem{theorem}{Theorem}[section]
\newtheorem{example}[theorem]{Example}
\AddToHook{env/example/begin}{\crefalias{theorem}{example}}
\newtheorem{lemma}[theorem]{Lemma}
\AddToHook{env/lemma/begin}{\crefalias{theorem}{lemma}}
\newtheorem{remark}[theorem]{Remark}
\AddToHook{env/remark/begin}{\crefalias{theorem}{remark}}
\newtheorem{proposition}[theorem]{Proposition}
\AddToHook{env/proposition/begin}{\crefalias{theorem}{proposition}}
\newtheorem{definition}[theorem]{Definition}
\AddToHook{env/definition/begin}{\crefalias{theorem}{definition}}
\newtheorem{conjecture}[theorem]{Conjecture}
\AddToHook{env/conjecture/begin}{\crefalias{theorem}{conjecture}}
\newtheorem{corollary}[theorem]{Corollary}
\AddToHook{env/corollary/begin}{\crefalias{theorem}{corollary}}
\newtheorem{question}[theorem]{Question}
\AddToHook{env/question/begin}{\crefalias{theorem}{question}}

\AddToHook{env/assumption/begin}{\crefalias{theorem}{assumption}}

\Crefformat{enumi}{#2\textup{(#1)}#3}
\usepackage[verbose=silent]{microtype}
\usepackage{geometry}

\usepackage{orcidlink}

\title{$p$-adic Hahn series with sparse support}

\usepackage{orcidlink}
\author{Shanwen Wang\orcidlink{0000-0003-0228-1208}}
\address{School of Mathematics, Renmin University of China, No. 59 Zhongguancun Street, Haidian District, Beijing, 100872, China}
\address{Bigdata and Responsible Artificial Intelligence for National Governance, Renmin University of China, No. 59 Zhongguancun Street, Haidian District, Beijing, 100872, China}
\email{s\_wang@ruc.edu.cn}
\urladdr{https://shanwenwang.github.io/}
\author{Yijun Yuan\orcidlink{0000-0001-6571-6980}}
\address{Institute for Theoretical Sciences, Westlake University, No. 600 Dunyu Road, Sandun town, Xihu district, Hangzhou, Zhejiang Province, 310030, China}
\email{941201yuan@gmail.com}
\urladdr{https://yijunyuan.github.io/}
\begin{abstract}
Let $p$ be a prime number. We introduce a sparseness condition on the supports of $p$-adic Hahn series, and prove that this condition implies transcendence over $\breve{\mathbf Q}_p$, the completed maximal unramified extension of $\bfQ_p$. As an application, we prove the order-type conjecture of $\bfQ_p$-algebraic $p$-adic Hahn series with bounded support under the condition that the support has only finitely many accumulation points.

All results in this paper have been fully formalized in the Lean theorem prover (v 4.31.0), building over Mathlib.
\end{abstract}

\usepackage{letltxmacro}
\LetLtxMacro\oldttfamily\ttfamily
\DeclareRobustCommand{\ttfamily}{\oldttfamily\csname ttsize\endcsname}

\begin{document}\def\ttsize{\footnotesize}
\subjclass{11J61, 11J81, 11D88, 68V20}
\keywords{$p$-adic transcendence, $p$-adic Hahn series, sparse support, formalization}
\maketitle
\tableofcontents
\section{Introduction}
Let $p$ be a prime number. Let $\bfF_p$ be the finite field of $p$ elements, $\overline{\bfF}_p$ be an algebraic closure of $\bfF_p$. Let $\bfQ_p$ be the field of $p$-adic numbers, $\breve{\bfQ}_p=W(\overline{\bfF}_p)[p^{-1}]$ be the completed maximal unramified extension of $\bfQ_p$, $\breve{\bfZ}_p=W(\overline{\bfF}_p)$ be the ring of integers of $\breve{\bfQ}_p$, $\overline{\bfQ}_p$ be an algebraic closure of $\bfQ_p$, and $\bfC_p=\widehat{\overline{\bfQ}}_p$ be the field of $p$-adic complex numbers. We normalize the $p$-adic valuation on $\bfC_p$ and its subfields by $v_p(p)=1$. For any set $X$, we denote by $\opn{card}(X)$ the cardinality of $X$.

\subsection{$p$-adic transcendence via $p$-adic Hahn series}
A $p$-adic Hahn series is a generalized formal power series of the form $f=\sum_{q\in\bfQ}[f(q)]p^q$, where $f(q)\in\overline{\bfF}_p$, $[\cdot]$ is the Teichmüller lift, and the support $\opn{Supp}(f)=\{q\in\bfQ\mid f(q)\neq 0\}$ is a well-ordered subset of $\bfQ$. Krull, Lampert, and Poonen showed that the set of $p$-adic Hahn series, which we denote by $\bfL_p$, forms the spherical completion of $\overline{\bfQ}_p$. In particular, $\bfL_p$ is algebraically closed and complete with respect to the $p$-adic valuation given by $f\mapsto \min\opn{Supp}(f)$.

The field $\bfL_p$ provides a natural setting in which to study transcendental number theory over $p$-adic fields. In this setting, a fundamental question arises:
\begin{question}
    Given a $p$-adic Hahn series $f\in\bfL_p$, how can one determine whether $f$ is a $p$-adic algebraic number, i.e., algebraic over $\bfQ_p$?
\end{question}

Although this question remains open in general, several necessary conditions for a $p$-adic Hahn series to be algebraic over $\bfQ_p$ are known:
\begin{enumerate}
    \item In \cite{lampertAlgebraicPadicExpansions1986} and \cite{poonenMAXIMALLYCOMPLETEFIELDS1993}, Lampert and Poonen proved that if $f\in\bfL_p$ is algebraic over $\bfQ_p$, then
          \begin{enumerate}
              \item there exists an integer $T$ such that $\opn{Supp}(f)\subset \frac{1}{T}\bfZ[1/p]$;
              \item there exists a finite extension $\bfF_q$ of $\bfF_p$ such that $\{f(q)\}_{q\in\bfQ}\subset \bfF_q$.
          \end{enumerate}
          These conditions are also studied quantitatively in \cite{wang2024hyperalgebraicinvariantspadicalgebraic}.
    \item In \cite{lampertAlgebraicPadicExpansions1986}, Lampert also proved that if $f\in\bfL_p$ is algebraic over $\bfQ_p$, then the accumulation points of $\opn{Supp}(f)$ are rational numbers.
    \item In \cite{kedlayaAlgebraicClosurePower2001} and \cite{kedlayaAlgebraicityGeneralizedPower2017b}, Kedlaya gives a necessary and sufficient condition for an equal-characteristic Hahn series in $\bfL_p^\flat\coloneqq\overline{\bfF}_p\pparen{t^\bfQ}$ to lie in the algebraic closure $\overline{\bfF}_p\pparen{t}^{\opn{alg}}$ of $\overline{\bfF}_p\pparen{t}$, phrased in the language of automata theory (cf. \Cref{thm:28713}). As an application, Kedlaya uses Witt vectors to lift this result to the $p$-adic case (cf. \Cref{thm:47}): he shows that the field $\bfC_p$, when viewed as a subfield of $\bfL_p$, coincides with the completion of the set $\Theta\left(\overline{\bfF}_p\pparen{t}^{\opn{alg},\wedge}\right)$, where $\overline{\bfF}_p\pparen{t}^{\opn{alg},\wedge}$ is the $t$-adic completion of $\overline{\bfF}_p\pparen{t}^{\opn{alg}}$ and $\Theta\colon \overline{\bfF}_p\pparen{t^\bfQ}\lto\bfL_p$ is the map $\sum_{q\in\bfQ}f(q)t^q\longmapsto \sum_{q\in\bfQ}[f(q)]p^q$.
\end{enumerate}

Intuitively, Kedlaya's result indicates that the algebraicity of equal-characteristic Hahn series in $\overline{\bfF}_p\pparen{t^\bfQ}$ and the algebraicity of $p$-adic Hahn series in $\bfL_p$ are related to some extent via the map $\Theta$, which leads to the following questions:
\begin{question}\label{qu:29274}\leavevmode
    \begin{enumerate}
        \item Suppose that $f\in\bfL_p^\flat$ is algebraic over $\bfF_p\pparen{t}$. Is $\Theta(f)$ algebraic over $\bfQ_p$?
        \item Suppose that $f\in\bfL_p$ is algebraic over $\bfQ_p$. Is $\Theta^{-1}(f)$ algebraic over $\bfF_p\pparen{t}$?
    \end{enumerate}
\end{question}

In \cite{wang2025padictranscendencesumk1inftyp1pk}, we gave a negative answer to the first question by showing that the $p$-adic Hahn series $\frakA\coloneqq\sum_{k=1}^\infty p^{-1/p^k}$ is transcendental over $\bfQ_p$, whereas its preimage under $\Theta$ is a root of the polynomial $X^p-X-t^{-1}$ over $\bfF_p\pparen{t}$.

\subsection{Sparseness and main theorem}
The key observation in \cite{wang2025padictranscendencesumk1inftyp1pk} is that the support of $\frakA$ is ``sparse'', in the sense that for any nonzero polynomial $P(X)\in\bfQ_p[X]$, the multinomial expansion of $P(\frakA)$ contains orphaned terms that cannot be cancelled by other terms, which forces $P(\frakA)$ to be nonzero. In this article, we generalize this idea to the following combinatorial definitions:
\begin{definition}\label{def:25951}\leavevmode
    \begin{enumerate}
        \item Any rational number $q$ can be uniquely written in the form
              $q=w+\sum_{i=1}^\infty q_i\cdot p^{-i}$, where $w\in\bfZ$, $q_i\in\{0,1,\ldots,p-1\}$ and $q_i\neq p-1$ for infinitely many $i$. We call $\frakN_p(q)\coloneqq\sum_{i=1}^\infty q_i\in \bfN\cup\{\infty\}$ the \textbf{$p$-digit sum} of $q$.
        \item For any set $S$ of rational numbers, define the \textbf{dominant $p$-digit sum} of $S$ to be
        $$\opn{dom}_p(S)\coloneqq\sup\{\frakN_p(q)|q\in S\}\in\bfN\cup\{\infty\},$$ and define the \textbf{$p$-digit dominant part} of $S$ to be the following subset of $S$:
              $$\opn{Dom}_p(S)\coloneqq \{q\in S|\frakN_p(q)=\opn{dom}_p(S)\}.$$
    \end{enumerate}
\end{definition}
\begin{definition}\label{def:15658}
    A set $S\subset [0,1)\cap\bfQ$ is \textbf{sparse} if:
    \begin{enumerate}
        \item $\opn{dom}_p(S)$ is finite;
        \item for infinitely many integers $n\geq 1$, there exist $n$ elements $d_1,\cdots,d_n\in \opn{Dom}_p(S)$ such that
              \begin{enumerate}
                  \item there is no carry in base $p$ when adding $d_1,\cdots,d_n$ together;
                  \item if $e_1,\cdots,e_n\in S$ satisfy that $d_1+\cdots+d_n$ and $e_1+\cdots+e_n$ differ by an integer, then up to a permutation of $\{1,2,\cdots,n\}$, $d_i=e_i$ for every $i=1,2,\cdots,n$.
              \end{enumerate}
    \end{enumerate}
\end{definition}
\begin{remark}
    We give some comments on \Cref{def:15658}:
    \begin{enumerate}
        \item The first condition is closely related to Kedlaya's criterion for the algebraicity of equal-characteristic Hahn series: he shows that if $f\in\bfL_p^\flat$ is algebraic over $\overline{\bfF}_p\pparen{t}$, then $\opn{dom}_p(-T\cdot\opn{Supp}(f))$ is finite for some integer $T\geq 1$.
        \item Condition (2a) indicates that the addition $d_1+\cdots+d_n$, no matter how one parenthesizes it, behaves like the addition in the free commutative monoid $\bigoplus_{\bfZ_{\geq 1}}\bfN$.
        \item Condition (2b) is a combinatorial rigidity condition asserting that the chosen carry-free sum admits a unique decomposition modulo $\bfZ$. This produces the orphaned exponents in the multinomial expansion, which is the key mechanism behind the transcendence proof.
    \end{enumerate}
\end{remark}
The following example illustrates a typical situation in which the sparseness condition is satisfied:
\begin{example}[cf. \Cref{eg:new}]\label{eg:61947}
    Let $A_1,A_2,\cdots$ be a family of pairwise disjoint nonempty subsets of $\bfZ_{\geq 1}$ such that $\sup_i \opn{card}(A_i)<\infty$, and this supremum is attained by infinitely many $i$. Then the set
    $$\left\{\sum_{r\in A_i}p^{-r}\middle\vert i=1,2,\cdots\right\}$$
    is sparse.
\end{example}

The main theorem of this article is the following:
\begin{theorem}[cf. \Cref{thm:23791}]\label{thm:main}
    Let $f\in\bfL_p$ be a $p$-adic Hahn series such that there exists an integer $T\geq 1$ for which $-T\cdot\opn{Supp}(f)$ admits a sparse set $W\neq\{0\}$ of representatives modulo $\bfZ$. Then $f$ is transcendental over $\breve{\bfQ}_p$, and hence over $\bfQ_p$.
\end{theorem}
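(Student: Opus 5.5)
We argue by contradiction. Suppose $f$ is algebraic over $\breve{\bfQ}_p$, and let
$$P(X)=\sum_{n=0}^N a_nX^n\in\breve{\bfZ}_p[X]$$
be a nonzero polynomial with $P(f)=0$. We may assume some coefficient is a unit.

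\textbf{Normalisations.} First replace $f$ by $f^{(T)}=\sum[f(q)]p^{Tq}$. Substituting $p\mapsto p^{1/T}$ is not a field automorphism, so instead we keep $f$ and use a field-theoretic reduction. Multiplying $f$ by a power of $p$ and subtracting a finite-sum element of $\breve{\bfQ}_p$ are harmless, so we may assume $\opn{Supp}(f)\subset\frac1T\bfZ_{<0}\cup\ldots$. The cleaner route is to group the support by classes modulo $\bfZ$. For each $d\in W$, set
$$g_d=\sum_{q\in\opn{Supp}(f),\; -Tq\equiv d}[f(q)]p^{q+d/T},$$
so that $g_d$ has integral exponents in $\frac1T\cdot T\bfZ$. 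Hence $g_d\in\breve{\bfQ}_p$, or at least $g_d$ lies in the $p$-adic completion of $\breve{\bfQ}_p$, which is $\breve{\bfQ}_p$ itself since the exponents are integral and bounded below. Then
$$f=\sum_{d\in W}g_d\,\pi^{-d},\qquad \pi=p^{1/T},$$
where each $g_d$ is a nonzero element of $\breve{\bfQ}_p$. Setting $x_d=\pi^{-d}$, the problem becomes the algebraic independence-type statement that the $\breve{\bfQ}_p$-linear combination $\sum g_dx_d$ of monomials $\pi^{-d}$ satisfies no polynomial relation.

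\textbf{Expansion.} Expand
$$P(f)=\sum_n a_n\sum_{e_1,\dots,e_n\in W}\Big(\prod_i g_{e_i}\Big)\pi^{-(e_1+\cdots+e_n)}.$$
Terms whose total exponents $e_1+\cdots+e_n$ differ by an integer are "interacting". Since the $\pi^{-s}$ for $s$ in distinct classes of $\frac{1}{T}\bfQ/\bfZ$-shifts are linearly independent over $\breve{\bfQ}_p$ (as the classes $s \bmod \bfZ$ give distinct cosets of $\breve{\bfQ}_p^\times$ in the value group), it suffices to find one class $s\bmod\bfZ$ whose total coefficient is nonzero. The precise statement here is that $\breve{\bfQ}_p(\pi^{-s_1}),\dots$ are linearly independent for pairwise distinct $s_j\bmod\bfZ$, since their valuations are distinct modulo $\bfZ=v_p(\breve{\bfQ}_p^\times)$. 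This handles the possibly infinite sum, because an infinite convergent sum in $\bfL_p$ with all valuations distinct mod $\bfZ$ and nonzero terms is nonzero by comparing Hahn supports.

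\textbf{Choosing the orphaned class.} We use sparseness with $n$ equal to some $n_0\ge N$ given by condition (2), with $d_1,\dots,d_{n_0}\in\opn{Dom}_p(W)$. The difficulty is that $P$ has degree $N\le n_0$, so we need the class of $d_1+\cdots+d_m$ for some $m\le N$. The main obstacle is this mismatch between degree and the $n$ in the definition. Here is the approach. Take $m=N$, the top degree with $a_N\neq0$, and use sub-sums $d_1+\cdots+d_N$. Condition (2b) for $n_0$ restricts to subsets: if $e_1+\cdots+e_N\equiv d_1+\cdots+d_N$, append $d_{N+1},\dots,d_{n_0}$ and apply uniqueness, so the $e_i$ are a permutation of $d_1,\dots,d_N$.

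It remains to rule out contributions from lower degrees $m<N$ with $e_1+\cdots+e_m\equiv d_1+\cdots+d_N$. For this we use the digit sum. Carry-freeness gives
$$\frakN_p(d_1+\cdots+d_N)=N\cdot\opn{dom}_p(W),$$
while subadditivity of $\frakN_p$ (carries only decrease digit sums) gives
$$\frakN_p(e_1+\cdots+e_m)\le m\cdot\opn{dom}_p(W)<N\cdot\opn{dom}_p(W),$$
provided $\opn{dom}_p(W)>0$, which holds since $W\neq\{0\}$. Since $\frakN_p$ depends only on the class mod $\bfZ$, these classes differ.

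Hence the coefficient of that class is
$$a_N\cdot(\text{multinomial count})\cdot\prod_i g_{d_i}.$$
The multinomial count is the number of orderings, which could be divisible by $p$. To avoid this, we choose the $d_i$ distinct, which is automatic from (2b) up to multiplicity; the coefficient $N!/\prod(\text{mult})!$ may still vanish mod $p$, but we are in characteristic $0$, so it is nonzero. Therefore $P(f)\neq0$, a contradiction.
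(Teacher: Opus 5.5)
Your combinatorial core is correct and is the same as the paper's (\Cref{lem:39609} and \Cref{lem:unique-phi-at-q0}). The class of $d_1+\cdots+d_N$ is separated from every lower-degree contribution by the digit-sum bound, which is where $W\neq\{0\}$ is used, and from the other degree-$N$ tuples by rigidity. The surviving coefficient is then a positive integer times $a_N\prod_i g_{d_i}$, hence nonzero in characteristic $0$; your remark that the $d_i$ can be taken distinct is unfounded but not needed. Your handling of the degree mismatch is a valid alternative to the paper's replacement of $P$ by $X^{n_0-N}P$: you take a witness tuple of length $n_0\geq N$ and cancel $d_{N+1},\dots,d_{n_0}$ from both multisets. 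The gap is in the framework that is supposed to turn this combinatorics into the statement $P(f)\neq 0$ in $\bfL_p$.

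First, $g_d$ need not lie in $\breve{\bfQ}_p$ when $T>1$. If $-Tq=d+k$ with $k\in\bfZ$, then $q+d/T=-k/T\in\frac{1}{T}\bfZ$, so $g_d$ only lies in $\breve{\bfQ}_{p,T}=\breve{\bfQ}_p(p^{1/T})$. This point is repairable, since distinct classes of $d$ modulo $\bfZ$ still give distinct classes of $-d/T$ modulo $\frac{1}{T}\bfZ=v_p(\breve{\bfQ}_{p,T}^\times)$. But it is exactly the pitfall that \Cref{sec:tscaled} exists to handle.

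More seriously, when $W$ is infinite (the only interesting case), $f=\sum_{d\in W}g_d\pi^{-d}$ is in general not a convergent series in $\bfL_p$. For $\frakA=\sum_k p^{-1/p^k}$ the terms have valuations $-1/p^k\to 0$. So your expansion of $P(f)$ as a sum over all tuples in $W^n$ is unjustified. So is the claim that the part of $P(f)$ supported on the class of $-\frac{1}{T}(d_1+\cdots+d_N)$ modulo $\frac{1}{T}\bfZ$ equals the orphaned term. Arithmetic in $\bfL_p$ involves carries, and nothing you invoke shows that products distribute over such non-convergent, Hahn-summable families, or that extracting the component on a class commutes with them. Linear independence of the $\pi^{-s}$ is a statement about finite combinations. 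Your appeal to an ``infinite convergent sum'' compared via Hahn supports presupposes both the convergence and the identification of class components that is to be proved. Even the convergence of each class coefficient, itself an infinite sum of products $\prod_i g_{e_i}$, needs the well-ordered representatives $\wtilde W$ and finiteness of representations.

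The missing idea is the paper's lift. Via $\sigma\colon\bfL_p\cong\breve{\bfZ}_{p,T}((t^{\bfQ}))/\calN_T$ (\Cref{prop:18592}), lift $f$ to $\widehat f=\sum_{w\in\wtilde W}C_w t^w$ with $C_w\in\breve{\bfZ}_{p,T}$. In that ring the multinomial expansion of $P(\widehat f)$ is a literal identity of formal Hahn series, with finitely many contributions to each exponent. Then $P(\widehat f)\in\calN_T$ gives, for every $q$, the convergent identity $\sum_{w\in\bfZ}p^{w/T}\cdot(\text{coefficient of }t^{q+w/T})=0$ in $\breve{\bfZ}_{p,T}$. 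At $q=q_0$ only $\phi_0$ contributes, and this yields the contradiction rigorously.
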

\begin{remark}
    The sparseness condition does not involve the coefficients of $f$, so it is natural that it does not distinguish algebraicity over $\bfQ_p$ from algebraicity over $\breve{\bfQ}_p$.%
\end{remark}

\subsection{Application: Order type conjecture of $\bfQ_p$-algebraic $p$-adic Hahn series}
In \cite[Section 4]{kedlayaPowerSeriesPAdic2001}, Kedlaya proved that the order type of the support of a $p$-adic Hahn series that is algebraic over $\bfQ_p$ is at most $\omega^\omega$, where $\omega$ is the first infinite ordinal. However, the lack of criteria for the $\bfQ_p$-algebraicity of $p$-adic Hahn series makes it difficult to determine which order types $\leq \omega^\omega$ can be realized as the order type of the support of a $\bfQ_p$-algebraic $p$-adic Hahn series. In the same article, Kedlaya predicts that only finite order types, $\omega$, and $\omega^\omega$ are realizable. The following conjecture was formulated in our previous work, where it was shown to be implied by Kedlaya's prediction (cf. \cite[Proposition 5.2]{wang2025padictranscendencesumk1inftyp1pk}):
\begin{conjecture}[cf. {\cite[Conjecture 5.1]{wang2025padictranscendencesumk1inftyp1pk}}]\label{conj:47376}
    Let $f\in\bfL_p$ be a $p$-adic algebraic number. If $\opn{Supp}(f)$ is bounded, then it must be a finite set.
\end{conjecture}
With the help of \Cref{thm:main}, we prove this conjecture with an additional assumption:
\begin{theorem}[cf. \Cref{coro:11594}]\label{thm:16129}
    Let $f\in\bfL_p$ be a $p$-adic algebraic number with bounded support. If $\opn{Supp}(f)$ has only finitely many accumulation points, then it must be a finite set.
\end{theorem}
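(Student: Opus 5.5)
We argue by contradiction: assume $\opn{Supp}(f)$ is infinite, and produce a shifted, rescaled piece of $f$ whose support, after negation and multiplication by a suitable $T$, has a sparse set of representatives modulo $\bfZ$. Then \Cref{thm:main} gives a contradiction.

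\emph{Locating an accumulation point.} Since $\opn{Supp}(f)$ is well-ordered, bounded and infinite, it has accumulation points, and there are finitely many of them by hypothesis. By well-ordering every accumulation point is a limit from the left. Let $a$ be the smallest accumulation point. Then $\opn{Supp}(f)\cap(-\infty,a)$ is an increasing sequence converging to $a$, possibly preceded by finitely many terms. By Lampert's result, $a\in\bfQ$, and by Lampert--Poonen, $\opn{Supp}(f)\subset\frac1T\bfZ[1/p]$ for some $T$, with coefficients in a finite field $\bfF_q$.

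\emph{Reducing to a series below $a$.} Split $f=g+h$, where $g$ collects the terms with exponent $<a$ and $h$ the terms with exponent $\geq a$. The key difficulty is that $g$ itself need not be algebraic. I would instead work with $f$ directly and show that the tail $h$ does not interfere with the sparseness argument. The plan is to use the proof mechanism of \Cref{thm:main}, namely orphaned terms modulo $\bfZ$ in $-T\cdot\opn{Supp}$, rather than the statement alone.

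Alternatively, one could try to arrange that $-T\cdot\opn{Supp}(f)$ itself admits a sparse set of representatives. Since $Ta\in\bfZ[1/p]$, it has finite denominator $p^m$. Multiplying by $p^m$ (a Frobenius-type twist) and shifting by $p^{-Ta}$ should put the exponents near $a$ into the form $-Ta+\epsilon$ with $\epsilon\to0^+$. Modulo $\bfZ$, the representatives of $-T\cdot\opn{Supp}$ close to $-Ta$ then have digit expansions whose first nonzero $p$-adic digit drifts to infinity.

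\emph{Establishing sparseness.} I would then use Kedlaya's finiteness of the dominant $p$-digit sum, valid for algebraic series via \Cref{thm:47} and \Cref{thm:28713} applied to approximations. This gives finiteness of $\opn{dom}_p$ up to the finitely many other accumulation points. For the infinitely many $n$, I would pick $d_1,\dots,d_n$ in $\opn{Dom}_p$ whose digits lie in pairwise disjoint, widely separated positions; this is possible because the digits drift to infinity. Such a choice has no carries, which gives condition (2a). Condition (2b) should follow by comparing digit sums. Any other $n$-tuple $e_i$ with the same sum modulo $\bfZ$ has total digit sum at least that of the $d$'s, and equality forces no carries. Since each $e_i$ has digit sum at most $\opn{dom}_p$, the equality of totals forces each $e_i\in\opn{Dom}_p$ to occupy exactly the digit blocks of one $d_j$. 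This is the kind of reasoning that appears in \Cref{eg:61947}.

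\emph{Main obstacle.} The step I expect to be hardest is handling the elements of the support near the other finitely many accumulation points. Their $p$-digit sums need not be controlled, so they may destroy (2b). I would try to isolate them by exploiting the finiteness of the accumulation points: only finitely many clusters exist, and a separation argument on digit positions (choosing the $d_i$ far out) should prevent the representatives of those clusters from matching the carry-free sum modulo $\bfZ$.
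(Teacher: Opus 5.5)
Your proposal has a genuine gap at exactly the step you flag as the main obstacle, and the separation argument you propose cannot close it. Suppose the limit points $\lambda_1,\dots,\lambda_s$ of the support are not all congruent modulo $\frac1T\bfZ$. Then the representatives of $-T(\opn{Supp}(f)-\lambda)$ coming from the cluster at $\lambda_l$ have the form $\theta_l+\varepsilon$, where $\theta_l\equiv-T(\lambda_l-\lambda)\pmod{\bfZ}$ is a \emph{fixed} nonzero block in early digit positions and $\varepsilon\to0^+$. Their digit sums are bounded, but they can exceed those of the cluster you centered at. In that case $\opn{Dom}_p$ consists, up to finitely many elements, of such numbers. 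Because they share the blocks $\theta_l$, no carry-free sum of $n$ of them exists for large $n$. Choosing the $d_i$ ``far out'' cannot help: the obstruction sits at fixed positions, and it kills condition (2a) of \Cref{def:15658}, not just (2b).

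Concretely, take a support of the shape $\{-p^{-k}\}_{k\geq2}\cup\{1-p^{-1}-p^{-k}\}_{k\geq2}$. It lies in $\bfZ[1/p]$, so your setup gives $T=1$ centered at $0$. The representatives are $p^{-k}$ and $p^{-1}+p^{-k}$, so $\opn{Dom}_p=\{p^{-1}+p^{-k}\}_{k\geq2}$, and no $p$ of its elements add without carry. Centering at $1-p^{-1}$ fails in the same way.

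The missing idea (\Cref{lem:42556}) is to align all clusters simultaneously. Choose $T$ with $T(\lambda_l-\lambda_1)\in\bfZ$ for every $l$ (possible since the $\lambda_l$ are rational; the paper also takes $T$ to be a multiple of the denominator in Kedlaya's $S_{a,b,c}$), and center at $\lambda_1$. Then every cluster lands just above $0$ modulo $\bfZ$; in the example, $T=p$ gives the sparse set $\{p^{-j}\}_{j\geq1}$. Your idea of clearing the $p$-power denominator of $T$ times the accumulation point is the right move, but it must be done for all accumulation points at once. It is simply an enlargement of $T$, which \Cref{thm:main} allows, not a twist of $f$.

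Two further repairs are needed.

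First, you need not split at the smallest accumulation point, where $g$ may fail to be algebraic, nor re-run the proof of \Cref{thm:main}. Subtract only a finite-support, hence algebraic, part $f_0$, multiply by $p^{-\lambda_1}$, and apply \Cref{thm:main} as stated to $(f-f_0)p^{-\lambda_1}$. This removal is necessary anyway: finitely many isolated exponents of large digit sum would make $\opn{Dom}_p$ finite and break (2a) for large $n$.

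Second, your claim that each $e_i$ must occupy exactly the block of one $d_j$ requires that no representative straddles two blocks. Bounded digit sums alone do not give this. The paper gets it from the ray decomposition. Kedlaya's recurrence condition plus order type $<\omega^2$ allows at most one long zero-gap per element (\Cref{lem:27279}). So after a finite removal the representatives are exactly the numbers $\delta_lp^{-Nj}$, and \Cref{lem:57121} applies.

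Alternatively, once the clusters are aligned, you could argue greedily. Only finitely many representatives exceed any given $p^{-k}$. So you can choose each $d_{j+1}\in\opn{Dom}_p$ to start beyond the last nonzero digit of every representative whose first nonzero digit lies at or before the last digit of $d_j$. Then any $e_i$ supported in the union of the blocks lies in a single block, and your digit-sum comparison finishes (2b). That variant uses only the digit-sum bound from Kedlaya's criterion, not its recurrence condition.
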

\begin{remark}\label{rmk:37294}
    By accumulation points of $\opn{Supp}(f)\subseteq \bfQ$ in \Cref{thm:16129} and other occurrences in this paper, we mean the accumulation point of $\opn{Supp}(f)$ in $\bfR$. In fact, it makes no difference whether one interprets accumulation points in $\bfQ$ or in $\bfR$. By \cite[Theorem 2]{lampertAlgebraicPadicExpansions1986}, the set of all $p$-adic Hahn series $f$ for which the accumulation points of $\opn{Supp}(f)$ are rational numbers is an algebraically closed subfield $F$ of $\bfL_p$. Since $F$ contains $\breve{\bfQ}_p$, it contains all $\breve{\bfQ}_p$-algebraic, and hence all $\bfQ_p$-algebraic, $p$-adic Hahn series. This subtlety was identified during the formalization of this work.
\end{remark}
A direct corollary of this theorem is the following:
\begin{corollary}\label{coro:3344}
    Let $q_1<q_2<\cdots<q_n<\cdots$ be a strictly increasing sequence of rational numbers. If a $p$-adic Hahn series $f=\sum_{i=1}^\infty [f(q_i)]p^{q_i}\in\bfL_p$ is algebraic over $\breve{\bfQ}_p$ (resp. $\bfQ_p$), then one must have $\lim_{i\to\infty}q_i=\infty$.
\end{corollary}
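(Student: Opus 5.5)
The plan is to deduce this corollary directly from \Cref{thm:16129}. Since $\breve{\bfQ}_p$ is an algebraic extension of neither interest nor obstruction here, I first reduce the $\breve{\bfQ}_p$-case to a statement to which the theorem applies. Note that \Cref{thm:16129} is stated for $\bfQ_p$-algebraic series. I will either observe that its proof (via \Cref{thm:main}, which already yields transcendence over $\breve{\bfQ}_p$) works verbatim for $\breve{\bfQ}_p$-algebraic series, or invoke the version \Cref{coro:11594} if it is phrased over $\breve{\bfQ}_p$. The $\bfQ_p$-case then follows immediately, since $\bfQ_p$-algebraic implies $\breve{\bfQ}_p$-algebraic.

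Now argue by contradiction. Since $(q_i)$ is strictly increasing, it either tends to $+\infty$ or converges to a finite limit $L\in\bfR$. Suppose the latter. Then $\opn{Supp}(f)=\{q_i\}$ (after discarding indices with $f(q_i)=0$, which leaves a subsequence still converging to $L$ or a finite set) is bounded, contained in $[q_1,L]$. Its accumulation points in $\bfR$ are exactly $\{L\}$ if infinitely many coefficients are nonzero, because a convergent sequence has only its limit as an accumulation point. So $\opn{Supp}(f)$ has only finitely many accumulation points.

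If $\opn{Supp}(f)$ is finite, the limit condition on the indexed sequence is not literally forced. I will handle this by interpreting the corollary for series whose coefficients $f(q_i)$ are nonzero, so that $\opn{Supp}(f)=\{q_i\}_{i\ge1}$ is infinite. Under that reading, \Cref{thm:16129} (in its $\breve{\bfQ}_p$ form) says a bounded support with finitely many accumulation points must be finite. This contradicts the infinitude of $\{q_i\}$, so $q_i\to\infty$.

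The only real obstacle is the upgrade of \Cref{thm:16129} from $\bfQ_p$ to $\breve{\bfQ}_p$. I expect to handle it by checking that in its proof algebraicity is used only to invoke \Cref{thm:main} and Lampert's rationality of accumulation points. Both hold over $\breve{\bfQ}_p$, by \Cref{thm:main} and \Cref{rmk:37294} respectively.
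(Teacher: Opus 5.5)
Your proposal is correct and follows the paper's route. The corollary is read off from \Cref{thm:29075}, which is already the $\breve{\bfQ}_p$-form of \Cref{thm:16129}, so no upgrade is needed; it is applied to the bounded support $\{q_i\}$, whose only real accumulation point is $\lim_i q_i$. Your reading that $f(q_i)\neq 0$ for all $i$ is also the one the formalization adopts, where the support equals the range of the sequence. One small correction: algebraicity over $\breve{\bfQ}_p$ enters that proof not only through \Cref{thm:main} but also through \Cref{prop:167}, i.e.\ Kedlaya's \Cref{thm:28713,thm:47}. Since all of these are stated over $\breve{\bfQ}_p$, this does not affect your argument.
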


We point out that this corollary actually produces infinitely many counterexamples to the first question in \Cref{qu:29274}. For example, as is mentioned in \cite[Section 1]{kedlayaAlgebraicClosurePower2001}, a prototype of Kedlaya's criterion for the $\overline{\bfF}_p$-algebraicity of Hahn series in $\bfL_p^\flat$ is the following result of Huang, which was independently discovered by Ştefănescu:
\begin{proposition}[{cf. \cite{HuangThesis1968,stefanescuMeromorphicFormalPower1983}}]
    Let $f=\sum_{i=1}^\infty f(i)\cdot t^{-1/p^i}\in\bfL_p^\flat$ be a Hahn series. Then the following are equivalent:
    \begin{enumerate}
        \item the series $f$ is algebraic over $\bfF_p\pparen{t}$;
        \item the series $f$ is algebraic over $\overline{\bfF}_p\pparen{t}$;
        \item the sequence $\{f(i)\}_{i\geq 1}$ is eventually periodic.
    \end{enumerate}
\end{proposition}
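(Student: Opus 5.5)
The implication (1)$\Rightarrow$(2) is trivial, since $\bfF_p\pparen{t}\subset\overline{\bfF}_p\pparen{t}$. I would prove (3)$\Rightarrow$(1) by an explicit Artin--Schreier type equation, and (2)$\Rightarrow$(3) by extracting a linear recurrence for the coefficients from an additive polynomial relation (Ore's lemma).

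\emph{(3)$\Rightarrow$(1).} Since $\{f(i)\}$ is eventually periodic, it takes finitely many values. So all $f(i)$ lie in some finite field $\bfF_{p^m}$.
\begin{itemize}
\item Let $N$ be the start of periodicity and $r$ the period, and set $k=\opn{lcm}(r,m)$.
\item Applying Frobenius $k$ times gives
$$f^{p^k}=\sum_{i\geq1} f(i)^{p^k}t^{-p^{k-i}}.$$
\item The terms with $i\leq k$ have exponents $-p^{k-i}\in\bfZ$, so they form a Laurent polynomial $P\in\bfF_{p^m}[t^{-1}]$.
\item Reindexing the remaining terms by $i\mapsto i+k$ gives $\sum_{i\geq1}f(i+k)^{p^k}t^{-1/p^i}$. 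For $i\geq N$ we have $f(i+k)^{p^k}=f(i)$, because $r\mid k$ and $f(i)\in\bfF_{p^m}$ with $m\mid k$.
\end{itemize}
Hence
$$f^{p^k}-f=P+\sum_{i<N}\bigl(f(i+k)^{p^k}-f(i)\bigr)t^{-1/p^i}.$$
The right-hand side is algebraic over $\bfF_p\pparen{t}$: each $t^{-1/p^i}$ is a root of $X^{p^i}-t^{-1}$, and the coefficients lie in a finite field. Therefore $f$ is algebraic over $\bfF_p\pparen{t}$.

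\emph{(2)$\Rightarrow$(3).} Let $K=\overline{\bfF}_p\pparen{t}$. Since $f$ is algebraic over $K$, the elements $f,f^p,f^{p^2},\dots$ span a finite-dimensional $K$-vector space. So there is a nontrivial relation
$$\sum_{j=0}^d a_jf^{p^j}=0,\qquad a_j\in K.$$
Multiplying by a suitable power of $t$, I may assume every $a_j\in\overline{\bfF}_p[[t]]$ and that some $c_j\coloneqq a_j(0)$ is nonzero. Let $d$ be the largest index with $c_d\neq0$; I may also drop any $a_j$ with $j>d$ from consideration only at the level of constant terms, as explained next.

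I would then compare the coefficients of $t^{-1/p^n}$ on both sides, for $n$ large. The exponents occurring in $f^{p^j}$ are $-p^{j-i}$, and $a_j$ contributes only nonnegative integer shifts $c\geq0$. A product term contributes to exponent $-1/p^n$ only if $-1/p^n-c=-p^{j-i}$.
\begin{itemize}
\item For $n$ large, the left side has denominator exactly $p^n$, so $i>j$.
\item Comparing fractional parts then forces $i=j+n$.
\item With $i=j+n$, the equation becomes $c=0$.
\end{itemize}
So only the constant terms of the $a_j$ contribute, and this exponent bookkeeping yields, for all $n\gg0$,
$$\sum_{j=0}^{d}c_j\,f(n+j)^{p^j}=0.$$
This is the step I expect to require the most care, making sure no other product terms sneak into these exponents.

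Since $c_d\neq0$, the recurrence determines $f(n+d)^{p^d}$, and hence $f(n+d)$ (Frobenius is bijective on $\overline{\bfF}_p$), from $f(n),\dots,f(n+d-1)$. It follows that every $f(n)$ lies in the finite field $F$ generated by the $c_j$ and finitely many initial values.
\begin{itemize}
\item The state vector $(f(n),\dots,f(n+d-1))\in F^d$ ranges over a finite set.
\item It evolves by a deterministic transition.
\end{itemize}
Hence the state vector is eventually periodic, and therefore so is $\{f(i)\}$.
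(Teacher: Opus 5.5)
Your proof is correct. Note that the paper gives no proof of this proposition. It is quoted from Huang and \c{S}tef\u{a}nescu and presented only as the prototype of Kedlaya's criterion (\Cref{thm:28713}).

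Within the paper's framework, the natural derivation would be to specialize that criterion to $(a,b,c)=(1,0,1)$. The support $\{-p^{-i}\}$ lies in $S_{1,0,1}$, and the twisted sequences $c_n$ are then either constant or tails of $(f(n))_{n\geq 1}$. So integrality over $\overline{\bfF}_p\pparen{t}$ is equivalent to eventual periodicity, and (1) follows because the coefficients then lie in a finite field. That route is short only because it outsources everything to a deep theorem.

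Your argument is self-contained and elementary. For (3)$\Rightarrow$(1) you use the Artin--Schreier-type identity $f^{p^k}-f=g$ with $g$ visibly algebraic; for $f(i)\equiv 1$ this is exactly the equation $X^p-X-t^{-1}$ from the introduction. For (2)$\Rightarrow$(3) you use a relation $\sum_j a_jf^{p^j}=0$ and extract its coefficient at $t^{-1/p^n}$ correctly. In fact the bookkeeping works for every $n\geq 1$, not just $n\gg 0$: $c-p^{j-i}=-p^{-n}$ with $c\in\bfZ_{\geq 0}$ forces $i=j+n$ and $c=0$.

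Two minor points:
\begin{enumerate}
\item You use $d$ both for the length of the relation and for the top index with $c_d\neq 0$; it is worth separating these.
\item Concluding $f(n+d)\in F$ uses that the finite field $F$ is perfect, not merely that Frobenius is bijective on $\overline{\bfF}_p$.
\end{enumerate}

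Finally, both of your directions rest on additivity of Frobenius, $f^{p^j}=\sum_i f(i)^{p^j}t^{-p^{j-i}}$. This is exactly what fails in $\bfL_p$, which is why the $p$-adic analogue \Cref{prop:31772} has a different answer and is obtained through the paper's sparseness-based transcendence theorem instead.
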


The $p$-adic analogue of this result is immediate from \Cref{coro:3344}:
\begin{proposition}\label{prop:31772}
    Let $f=\sum_{i=1}^\infty [f(i)]\cdot p^{-1/p^i}\in\bfL_p$ be a $p$-adic Hahn series. Then the following are equivalent:
    \begin{enumerate}
        \item the series $f$ is algebraic over $\bfQ_p$;
        \item the series $f$ is algebraic over $\breve{\bfQ}_p$;
        \item $f(i)=0$ for all but finitely many $i$.
    \end{enumerate}
\end{proposition}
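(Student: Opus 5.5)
The implications $(3)\Rightarrow(1)\Rightarrow(2)$ are immediate. If $f(i)=0$ for all but finitely many $i$, then $f$ is a finite sum $\sum_{i\le N}[f(i)]p^{-1/p^i}$. Each $[f(i)]$ lies in $\breve{\bfZ}_p$ and is a root of unity (or zero), so it is algebraic over $\bfQ_p$, since $f(i)\in\overline{\bfF}_p$ lies in some finite field $\bfF_{q}$. Each $p^{-1/p^i}$ is a root of $X^{p^i}-p^{-1}$. Hence $f$ is algebraic over $\bfQ_p$. Algebraicity over $\bfQ_p$ trivially implies algebraicity over $\breve{\bfQ}_p\supset\bfQ_p$.

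The substantive implication is $(2)\Rightarrow(3)$, and I would derive it directly from \Cref{coro:3344}. Suppose $f$ is algebraic over $\breve{\bfQ}_p$, and assume for contradiction that the set $I=\{i\ge1\mid f(i)\neq0\}$ is infinite. The support of $f$ is $\opn{Supp}(f)=\{-1/p^i\mid i\in I\}$. Listing $I$ increasingly as $i_1<i_2<\cdots$, the exponents $q_k\coloneqq-1/p^{i_k}$ form a strictly increasing sequence of rational numbers. Moreover $f=\sum_{k\ge1}[f(i_k)]p^{q_k}$ is exactly of the shape considered in \Cref{coro:3344}, with all coefficients nonzero. That corollary then forces $q_k\to\infty$, yet $q_k\to0$, which is a contradiction. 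So $I$ is finite, which is $(3)$.

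The only point needing care is that \Cref{coro:3344} applies to an infinite strictly increasing sequence. Here we should state it with the support enumerated (dropping zero coefficients), so that the limit statement genuinely concerns the support, and zero terms cannot spoil the conclusion. I do not expect any real obstacle: all the transcendence content is already contained in \Cref{coro:3344}, which in turn rests on \Cref{thm:16129}. Alternatively, one could apply \Cref{thm:main} directly. Take $T=1$: then $-\opn{Supp}(f)=\{p^{-i}\mid i\in I\}$ is a subset of $[0,1)$ whose elements all have $p$-digit sum $1$. By \Cref{eg:61947}, with singletons $A_i=\{i\}$ for $i\in I$, this set is sparse and is not $\{0\}$. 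So $f$ would be transcendental over $\breve{\bfQ}_p$, which gives the same contradiction.
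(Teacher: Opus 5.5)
Your proof is correct, and your main argument for $(2)\Rightarrow(3)$ is the paper's own: the proposition is an immediate consequence of \Cref{coro:3344}, and you are right that the corollary has to be read with the sequence enumerating the actual support. Your alternative route through \Cref{thm:main} with $T=1$ and \Cref{eg:61947} (applied to the singletons $A_k=\{i_k\}$) is also valid and is more economical, because it bypasses \Cref{thm:16129} and therefore the black-box results of Kedlaya (\Cref{thm:28713,thm:47}) on which the paper's route depends.
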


We hope the approach in this article can be further developed to give a full answer to \Cref{conj:47376}.

\subsection{Formalization in Lean}
    Given the highly combinatorial nature of the sparseness condition and the transcendence proof, we formalize all results of this article in the Lean theorem prover (cf. \cite{demouraLeanTheoremProver2015,mouraLean4TheoremProver2021}, v 4.31.0), a proof assistant based on dependent type theory, building over Mathlib (cf. \cite{mathlib2020}, \href{https://github.com/leanprover-community/mathlib4/tree/fabf563a7c95a166b8d7b6efca11c8b4dc9d911f}{Commit abf563}). This formalization was carried out with the help of the agentic auto-formalization system Archon (cf. \cite{ju2026automatedconjectureresolutionformal}), developed by the AI4Math team at BICMR, Peking University.
\begin{remark}
    The formalization of all contents of \Cref{sec:18373}, \Cref{sec:sparse}, \Cref{sec:tscaled} and \Cref{sec:main} is completely \lean{sorry}-free. For the formalization of \Cref{sec:55224} and the consequential \Cref{coro:3344} and \Cref{prop:31772}, we use two results of Kedlaya (cf. \Cref{thm:28713} and \Cref{thm:47}) as black boxes, for they require a significant amount of work to formalize and are far beyond the scope of this article. These are the only two \lean{sorry}-s in the formalization project.
\end{remark}
The formalization is available at \url{https://github.com/YijunYuan/FormalizedSparse}, and we refer the reader to Appendix \ref{sec:formal} for a detailed discussion on the formalization process.

\subsection*{Acknowledgements}
The authors would like to thank Wanying He and Jiedong Jiang for their assistance with using Archon. The research is partially supported by the National Key R\&D Program of China (Grant No. 2024YFA1014000).

\section{Preliminaries on Hahn series}\label{sec:18373}
To keep this article self-contained, we briefly recall some basic facts about Hahn series.%

\begin{definition}[{\cite[Section 3]{poonenMAXIMALLYCOMPLETEFIELDS1993}}]\label{def:61947}
    Let $R$ be a commutative ring and $G$ be an ordered group.
    \begin{enumerate}
        \item For any $f\in\opn{Hom}_{\opn{Set}}(G,R)$, we define the \textbf{support} of $f$ to be $$\opn{Supp}(f)=\{g\in G\colon f(g)\neq 0\}.$$
        \item Define the set of \textbf{Hahn series} over $R$ with value group $G$ to be
              $$R\pparen{G}\coloneqq \{f\in \opn{Hom}_{\opn{Set}}(G,R)\colon \opn{Supp}(f) \text{ is well-ordered}\}.$$
              By introducing a formal variable $t$, elements in $R\pparen{G}$ will also be written as $\sum_{g\in G}r_gt^g$, where $r_g\in R$ for all $g\in G$.
    \end{enumerate}
\end{definition}
\begin{proposition}[{\cite[Lemma 1,Corollary 2]{poonenMAXIMALLYCOMPLETEFIELDS1993}}]\label{prop:40704}
    Let $R$ be a commutative ring and $G$ be an ordered group.
    \begin{enumerate}
        \item With identity $1\cdot t^0$ and addition as well as multiplication given by
              $$\sum_{g\in G}a_g t^g+\sum_{g\in G}b_g t^g\coloneqq\sum_{g\in G}(a_g+b_g)t^g,\ \sum_{g\in G}a_g t^g\cdot\sum_{g\in G}b_g t^g\coloneqq\sum_{g\in G}\left(\sum_{h\in G}a_h b_{g-h}\right)t^g,$$
              $R\pparen{G}$ forms a commutative ring.
        \item If $R$ is a field, then so is $R\pparen{G}$. Moreover, with the map
              $$v\colon R\pparen{G}\lto G\cup\{\infty\},\ f\longmapsto \begin{cases}\min\opn{Supp}(f),& \text{ if }f\neq 0\\\infty,& \text{ if }f=0\end{cases},$$
              $R\pparen{G}$ becomes a valued field with value group $G$ and residue field $R$.
    \end{enumerate}
\end{proposition}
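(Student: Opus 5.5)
The plan is to follow the standard construction of Hahn series rings as in \cite{poonenMAXIMALLYCOMPLETEFIELDS1993}. The essential input is the following combinatorial fact about well-ordered subsets of an ordered abelian group $G$. If $A,B\subseteq G$ are well-ordered, then $A\cup B$ and $A+B$ are well-ordered. Moreover, for each $g\in G$ the set of pairs $(a,b)\in A\times B$ with $a+b=g$ is finite.

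To prove this fact, I would use the characterization that a totally ordered set is well-ordered if and only if every sequence in it has a non-decreasing subsequence; this follows from Ramsey's theorem or a direct argument.
\begin{enumerate}
\item Given a sequence $a_n+b_n$ in $A+B$, first extract a subsequence along which $a_n$ is non-decreasing. Then extract a further subsequence along which $b_n$ is also non-decreasing. Along this subsequence $a_n+b_n$ is non-decreasing, so $A+B$ is well-ordered.
\item For the finiteness of the fibres, suppose infinitely many distinct pairs sum to $g$. Since $b=g-a$ is determined by $a$, we may choose them with the $a_n$ pairwise distinct. Pass to a strictly increasing subsequence of the $a_n$. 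Then $b_n=g-a_n$ is strictly decreasing in $B$, which contradicts the well-ordering of $B$.
\item For the union, a sequence in $A\cup B$ has infinitely many terms in $A$ or infinitely many in $B$, and either subsequence has a non-decreasing subsequence.
\end{enumerate}

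\textbf{Part (1).} With this fact, the formulas in (1) make sense. The product coefficient $\sum_h a_h b_{g-h}$ is a finite sum. The supports satisfy $\opn{Supp}(f+g)\subseteq\opn{Supp}f\cup\opn{Supp}g$ and $\opn{Supp}(fg)\subseteq\opn{Supp}f+\opn{Supp}g$, and both right-hand sides are well-ordered. The ring axioms then reduce to identities of finite sums. For associativity, the coefficient of $t^g$ in $(fg)h$ and in $f(gh)$ both equal $\sum_{x+y+z=g}a_xb_yc_z$. This sum is finite because triples with $x+y+z=g$ are finite in number: apply the fibre finiteness to $\opn{Supp}f+\opn{Supp}g$ and $\opn{Supp}h$, then again to $\opn{Supp}f$ and $\opn{Supp}g$. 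Commutativity, distributivity and the unit $1\cdot t^0$ are immediate.

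\textbf{Part (2), the valuation.} Put $v(f)=\min\opn{Supp}f$. The inequality $v(f+g)\ge\min(v(f),v(g))$ is clear. For the product, the coefficient of $t^{v(f)+v(g)}$ in $fg$ is $a_{v(f)}b_{v(g)}$. Indeed, any other pair $(h,g')$ with $h+g'=v(f)+v(g)$ has $h<v(f)$ or $g'<v(g)$, so it contributes $0$. All other exponents in $fg$ are larger. Since $R$ is a field, $a_{v(f)}b_{v(g)}\ne0$, so $v(fg)=v(f)+v(g)$ and $R\pparen{G}$ is a domain. The residue field is $R$: an element of valuation $\ge0$ has a constant term, and reduction to it is a surjective ring map with kernel $\{v>0\}$. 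The value group is $G$, since $v(t^g)=g$.

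\textbf{Part (2), invertibility.} This is the main obstacle. It suffices to invert $f=c\,t^{g_0}(1-\varepsilon)$ with $c\in R^\times$ and $v(\varepsilon)>0$, so the task is to make sense of $\sum_{n\ge0}\varepsilon^n$. The needed input is Neumann's lemma. If $S\subseteq G_{>0}$ is well-ordered, then the monoid $\langle S\rangle=\bigcup_n nS$ is well-ordered, and each element of $\langle S\rangle$ lies in $nS$ for only finitely many $n$. I would prove this by taking a minimal bad sequence, following Higman: sequences of words over $S$ ordered by the sum map. Alternatively, suppose $x_1>x_2>\cdots$ is strictly decreasing in $\langle S\rangle$, and write each $x_i$ as a sum of elements of $S$ with its largest summand removed. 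Choosing a minimal counterexample by length and by the smallest first element then yields a contradiction.

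Granting Neumann's lemma, for each $g$ only finitely many $\varepsilon^n$ have $g$ in their support, so $\sum_n\varepsilon^n$ is a well-defined Hahn series. Telescoping then shows that $(1-\varepsilon)\sum_n\varepsilon^n=1$, and hence $f^{-1}=c^{-1}t^{-g_0}\sum_n\varepsilon^n$. Therefore $R\pparen{G}$ is a field.
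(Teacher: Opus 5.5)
Your proposal is correct, and the paper gives no proof of its own: it simply cites Poonen's Lemma~1 and Corollary~2. Your route is the standard Hahn--Neumann argument behind that citation: well-orderedness and finite fibres of $A+B$ give the ring structure, $v(fg)=v(f)+v(g)$ comes from the leading coefficients, and Neumann's lemma makes $\sum_{n\ge 0}\varepsilon^n$ a well-defined inverse of $1-\varepsilon$. The only step left at sketch level is Neumann's lemma itself. Your minimal-bad-sequence idea does work there: remove the largest summand along a subsequence where these maxima are non-decreasing. For the finiteness of $\{n : g\in nS\}$, run the same argument with non-increasing sums of pairwise distinct sorted words rather than strictly decreasing sums, or use the antichain form of Higman's lemma.
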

Since $\opn{char}R\pparen{G}=\opn{char}R$, we call $R\pparen{G}$ the \textbf{equal-characteristic field of Hahn series} over $R$ with value group $G$, also denoted by $R\pparen{t^G}$ with respect to the formal variable $t$.

\begin{proposition}[{\cite[Proposition 3, Corollary 3, Proposition 5]{poonenMAXIMALLYCOMPLETEFIELDS1993}}]\label{prop:49318}
    Let $k$ be a perfect field of characteristic $p$ and $G$ be an ordered group containing $\bbZ$ as a subgroup. In addition, let
    $$\calN\coloneqq \left\{\sum_{g\in G}r_g t^g\in W(k)\pparen{t^G}\colon \text{ for every } g\in G,\ \sum_{n\in\bbZ}r_{g+n}p^n=0\right\},$$
    where $W(k)$ is the ring of Witt vectors of $k$. We call elements in $\calN$ the \textbf{null series} of $W(k)\pparen{t^G}$. Then
    \begin{enumerate}
        \item $\calN$ is a maximal ideal of $W(k)\pparen{t^G}$, which makes $W(k)\pparen{p^G}\coloneqq W(k)\pparen{t^G}/\calN$ a field\footnote{Informally, $W(k)\pparen{p^G}$ is obtained by replacing the formal variable $t$ in elements of $W(k)\pparen{t^G}$ by the prime $p$.}, called the \textbf{field of $p$-adic Hahn series}.
        \item Every element $x$ in $W(k)\pparen{p^G}$ can be uniquely written as $$x=\sum_{g\in G}[r_g]p^g,$$
              where $r_g\in k$ for all $g\in G$ and $[\cdot]\colon k\lto W(k)$ is the Teichmüller lift. We call this the \textbf{standard expansion} of the element $x$.
        \item For $f=\sum_{g\in G}[r_g]p^g\in W(k)\pparen{p^G}$, define the \textbf{support} of $f$ to be
              $$\opn{Supp}(f)=\{g\in G\colon r_g\neq 0\}.$$
              Then the map
              $$v\colon W(k)\pparen{G}/\calN\lto G\cup\{\infty\},\ f\mapsto \begin{cases}
                      \min\opn{Supp}(f), & \text{ if }f\neq 0 \\\infty,& \text{ if }f=0
                  \end{cases}$$
              makes $W(k)\pparen{G}/\calN$ a mixed-characteristic valued field with value group $G$ and residue field $k$.
    \end{enumerate}
\end{proposition}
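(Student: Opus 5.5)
The plan is to work coset by coset in $G/\bbZ$. The standard expansion comes from $p$-adic digit expansions inside each coset, and the field property follows from the invertibility of series with unit leading coefficient.

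Fix $f=\sum_g r_gt^g\in W(k)\pparen{t^G}$ and a coset $g+\bbZ$. Since $\opn{Supp}(f)$ is well-ordered, $\{n\in\bbZ: r_{g+n}\neq 0\}$ is bounded below. Hence $\sigma_g(f)\coloneqq\sum_{n}r_{g+n}p^n$ converges in $W(k)[1/p]$, and $f\in\calN$ if and only if $\sigma_g(f)=0$ for every $g$.

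\textbf{Step 1: $\calN$ is an ideal.} Closure under addition is immediate from linearity of $\sigma_g$. For multiplication, I first treat monomials $a t^h$ with $a\in W(k)$, where $\sigma_g(at^hf)=a\,\sigma_{g-h}(f)$. For a general multiplier $b=\sum_h b_ht^h$, I write $\sigma_g(bf)$ as a double sum. The key point is that for fixed $g$ and a fixed $p$-adic precision, only finitely many pairs $(h,g')$ with $h+g'\in g+\bbZ$ contribute. This finiteness is a Neumann-lemma argument: finitely many solutions of $h+g'=c$ with $h\in\opn{Supp}(b)$ and $g'\in\opn{Supp}(f)$, together with the lower bound on the powers of $p$. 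Granting it, the sum may be regrouped as $\sum_{h} b_h\sigma_{g-h}(f)$ up to $p$-adic convergence, and this vanishes.

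\textbf{Step 2: existence and uniqueness of the standard expansion.} For each coset representative $g$, expand $\sigma_g(f)=\sum_{n\geq n_0}[a_{g,n}]p^n$ in Teichmüller digits with $a_{g,n}\in k$. Set $x=\sum_{g,n}[a_{g,n}]t^{g+n}$. Then $f-x$ is null by construction. To see that $x$ is a Hahn series, note that $\opn{Supp}(x)\subseteq \opn{Supp}(f)+\bbN$, which is well-ordered by Neumann's lemma. For uniqueness, suppose two Teichmüller expansions differ by a null series. Then on each coset we get $\sum_n[a_n]p^n=\sum_n[b_n]p^n$ in $W(k)[1/p]$, and uniqueness of Teichmüller digit expansions in $W(k)$ gives $a_n=b_n$.

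\textbf{Step 3: $\calN$ is maximal.} First, $1\notin\calN$. Next, let $f\notin\calN$ and replace it by its standard expansion. Its leading term is $[r]t^{g}$ with $r\neq0$, so $f=[r]t^g(1-u)$ with $v(u)>0$. Here $[r]$ is a unit of $W(k)$, and $\sum_{m\ge0}u^m$ is a well-defined Hahn series by Neumann's lemma. Hence $f$ is a unit in $W(k)\pparen{t^G}$, and so every nonzero class of the quotient is invertible.

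\textbf{Step 4: the valuation.} By Step 2, $v$ is well defined. To check $v(xy)=v(x)+v(y)$, note that the product of the leading terms is $[rs]t^{g+h}$ with $rs\ne0$. Re-expanding into standard form only carries mass to strictly larger exponents of the same coset, or alters coefficients at larger exponents, so the minimum is unchanged. The ultrametric inequality follows similarly, because carrying never decreases exponents. The value group is $G$ since $v(t^g)=g$. The residue field is $k$ because the valuation ring modulo the maximal ideal is identified with the coefficient of $t^0$ in the standard expansion.

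\textbf{Main obstacle.} I expect the main difficulty to be the regrouping in Step 1, namely justifying the interchange of the infinite Hahn-product sum with the $p$-adic sum $\sigma_g$. I would first try to handle it by truncating $b$ modulo $p^N$ and modulo large exponents, reducing to finitely many monomials.
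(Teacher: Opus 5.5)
Your proposal is correct and is essentially the standard argument. The paper gives no proof of its own and defers to Poonen, whose proof uses the same ingredients: a coset-by-coset Teichmüller expansion for the standard form, a Neumann-lemma finiteness argument justifying $\sigma_g(bf)=\sum_h b_h\sigma_{g-h}(f)$, and invertibility of a series with unit leading term $[r]t^g$ for maximality. The obstacle you flag in Step 1 is already settled by your own observation: modulo $p^N$, only the finitely many pairs $(h,g')$ with $h+g'=g+n$ and $n_0\le n<N$ contribute, and for each such $h$ the partial sum over $n<N$ is congruent to $\sigma_{g-h}(f)=0$ modulo $p^N$.
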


The most fundamental property of the field of Hahn series is the following:
\begin{theorem}[cf. {\cite[Theorem 1, Corollary 4, Corollary 6]{poonenMAXIMALLYCOMPLETEFIELDS1993}}]
    Let $F$ be an equal-characteristic (resp. mixed-characteristic) valued field with divisible value group $G$ and algebraically closed residue field $k$. Then the equal-characteristic (resp. $p$-adic) field of Hahn series $k\pparen{t^G}$ (resp. $W(k)\pparen{p^G}$) is the unique (up to isomorphism of valued fields) minimal spherically complete extension of $F$. Moreover, it is algebraically closed and complete.
\end{theorem}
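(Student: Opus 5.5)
The plan follows Kaplansky's theory of immediate extensions and pseudo-Cauchy sequences, as organized by Poonen. There are three steps: (i) the Hahn series fields are spherically complete; (ii) $F$ embeds into them as a valued field; (iii) such an embedding is unique and any spherically complete immediate extension is this one. Throughout, write $K$ for either $k\pparen{t^G}$ or $W(k)\pparen{p^G}$, equipped with the valuation $v$ of \Cref{prop:40704} or \Cref{prop:49318}. In both cases $K$ has value group $G$ and residue field $k$.

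\textbf{Step (i): spherical completeness.} Let $(x_\rho)$ be a pseudo-Cauchy sequence in $K$, so that $v(x_\sigma-x_\rho)=\gamma_\rho$ is eventually increasing. Its coefficients stabilize: for $g<\gamma_\rho$, the coefficient of $t^g$ (resp. of $p^g$ in the standard expansion) in $x_\sigma$ is independent of $\sigma>\rho$. Define $x$ coefficientwise as these stabilized values. The support of $x$ is a union of initial segments of well-ordered supports, and so is well-ordered. Then $v(x-x_\rho)\ge\gamma_\rho$, so $x$ is a pseudo-limit.

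In the mixed characteristic case, "coefficient" must be read through the unique standard expansion of \Cref{prop:49318}, so that the null ideal $\calN$ causes no ambiguity. Comparing truncations then uses the fact that $v$ on $W(k)\pparen{p^G}$ is computed from the standard expansion. Completeness of $K$ follows immediately, since a Cauchy sequence is in particular pseudo-Cauchy with a pseudo-limit, and that pseudo-limit is a genuine limit.

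\textbf{Step (ii): algebraic closedness and the embedding.} I will use Kaplansky's result that a spherically complete field has no proper immediate extensions. Any finite extension $L/K$ has value group in the divisible hull of $G$, which is $G$ itself, and residue field algebraic over $k$, hence equal to $k$. So $L/K$ is immediate, and therefore $L=K$. For the argument to run, one checks that every pseudo-Cauchy sequence of algebraic type has a pseudo-limit, which holds here by Step (i).

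To embed $F$, start from the prime field, or from $W(\bbF_p)[1/p]$ in mixed characteristic. The aim is to extend the embedding through $F$ by transfinite induction. Kaplansky's Lemma on extending embeddings into spherically complete fields applies because $G$ is divisible and $k$ is algebraically closed, so his hypothesis A on $p$-degree extensions of residue and value group is automatic. The extension proceeds as follows. Residue field elements are handled via Teichmüller lifts, value group elements via $t^g$ or $p^g$, and immediate transcendental or algebraic elements by sending the pseudo-limit of a pseudo-Cauchy sequence in the subfield to a pseudo-limit of its image. Minimality then follows, since the image of $F$ is dense in $K$ in the sense of pseudo-limits.

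\textbf{Step (iii): uniqueness.} By the same lemma, any two minimal spherically complete immediate extensions of $F$ are isomorphic over $F$, because the embedding constructed in Step (ii) must be surjective by minimality. The main obstacle is Kaplansky's uniqueness and embedding lemma in the algebraic-type case. There one has to show that a pseudo-limit of a pseudo-Cauchy sequence of algebraic type can be chosen as a root of the relevant minimal polynomial. This requires hypothesis A, which is exactly where divisibility of $G$ and algebraic closedness of $k$ enter. I would cite Kaplansky's theorem for this step rather than reprove it.
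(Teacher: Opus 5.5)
The paper gives no argument of its own here. It cites Poonen, who combines Kaplansky's theorem with the maximal completeness of the Hahn fields, so your overall architecture is the intended one. Your Step (i), the proof that $K$ is algebraically closed, and Step (iii) are sound.

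The gap is in the construction of the embedding $F\hookrightarrow K$ in Step (ii).

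\emph{Algebraic steps.} Sending a new residue element to its Teichm\"uller lift (or constant), and an element of new value $g$ to $t^g$ or $p^g$, is only legitimate when $\bar a$ is transcendental over the current residue field $\bar E$, resp.\ when $v(a)$ has infinite order modulo $\Gamma_E$. In the algebraic cases the prescribed image does not satisfy the relations $a$ satisfies over $E$. For instance, in equal characteristic $0$ let $E=\bfQ(x,t)$ with $v(x)=0$, $\bar x$ transcendental and $v(t)=1$, embedded by $x\mapsto \bar x$, $t\mapsto t^1$, and let $a\in F$ with $a^2=x+t$. Then $\bar a=\sqrt{\bar x}$ is algebraic over $\bar E$, but the constant $\sqrt{\bar x}$ is not a root of $X^2-\bar x-t$. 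You need the Hensel root, and in general a choice among conjugate extensions of the valuation.

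\emph{Hypothesis A.} Your appeal to it is misplaced. Kaplansky's algebraic-type lemma needs hypothesis A for the field $E$ carrying the pseudo-Cauchy sequence. Intermediate fields of an induction from the prime field need not satisfy it: in mixed characteristic $\bfQ$ has value group $\bfZ$, which is not $p$-divisible. Divisibility of $G$ and algebraic closedness of $k$ give hypothesis A only for fields with the full value group and residue field.

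\emph{Starting point.} $F$ need not contain $W(\bbF_p)[1/p]=\bfQ_p$ (take $F=\overline{\bfQ}$ with a $p$-adic valuation), so the induction has to start from $\bfQ$.

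A standard repair goes as follows.
\begin{enumerate}
\item Since $G$ is divisible and $k$ is algebraically closed, $F^{\mathrm{alg}}$ with any extension of $v$ is an immediate extension of $F$. So it suffices to embed $F^{\mathrm{alg}}$, and you can keep the domain $E$ of the partial embedding algebraically closed.
\item Extending across algebraic closures uses that $K$ is algebraically closed and that all extensions of a valuation to a normal algebraic extension are conjugate. Then compose with a decomposition-group element to get the identity on residue fields.
\item The remaining simple transcendental steps are residue-transcendental (Teichm\"uller lift), value-transcendental ($t^g$ or $p^g$), or immediate. In the immediate case, $E$ being algebraically closed forces a pseudo-Cauchy sequence without limit in $E$ to be of transcendental type. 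So any pseudo-limit in $K$ works, and hypothesis A is not needed at all.
\end{enumerate}
Alternatively, first build a subfield with value group $G$ and residue field $k$, and only then invoke Kaplansky.

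Two minor points. A Cauchy sequence need not be pseudo-Cauchy, so argue completeness with nested balls or a subsequence. Minimality should be argued as ``if $F\subseteq L\subseteq K$ with $L$ spherically complete, then $K/L$ is immediate, so $L=K$''. Density in the sense of pseudo-limits does not suffice, because pseudo-limits are not unique.
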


The following are the fields of Hahn series used in this article:
\begin{example}\label{eg:44541}
    Let $F=\overline{\bfF}_p\pparen{t}^{\opn{alg}}$ (resp. $\breve{\bfQ}_p^{\opn{alg}}$), which has value group $\bfQ$ and residue field $\overline{\bfF}_p$. Then the field of equal-characteristic (resp. $p$-adic) Hahn series $\bfL_p^\flat\coloneqq\overline{\bfF}_p\pparen{t^\bfQ}$ (resp. $\bfL_p\coloneqq\breve{\bfZ}_p\pparen{p^\bfQ}=W(\overline{\bfF}_p)\pparen{p^\bfQ}$) is the spherical completion of $F$ with the same residue field and value group, and is algebraically closed and complete.
\end{example}
For brevity, we call elements of $\bfL_p$ $p$-adic Hahn series without specifying the residue field and value group.

\section{Sparseness and $(c,n)$-sparseness}\label{sec:sparse}
To give a rigorous and workable formulation of the sparseness condition in \Cref{def:15658}, we introduce several combinatorial notions and auxiliary functions related to base-$p$ expansions of rational numbers in $[0,1)$.
\begin{definition}\leavevmode
    \begin{enumerate}
        \item For $\uld\in \bigoplus_{\bfZ_{\geq 1}}\bfN$, let $\Psi(\uld)\coloneqq \sum_{i=1}^\infty d_i\in\bfN$ and $\normd\coloneqq \sum_{i=1}^\infty d_i p^{-i}\in\bfQ_{\geq 0}$.
        \item Let $\bbP\coloneqq \bigoplus_{\bfZ_{\geq 1}}\{0,1,\cdots,p-1\}\subsetneq \bigoplus_{\bfZ_{\geq 1}}\bfN$.
        \item For $\uld\in \bigoplus_{\bfZ_{\geq 1}}\bfN$ and $i\in\bfZ_{\geq 1}$, denote by $\uld_i$ the $i$-th component of $\uld$.
    \end{enumerate}
\end{definition}
\begin{lemma}
    For any $\uld\in\bigoplus_{\bfZ_{\geq 1}}\bfN$, there exists a unique element $\tau(\uld)\in\bbP$ such that $\normd-\lVert\tau(\uld)\rVert\in\bfZ$.
\end{lemma}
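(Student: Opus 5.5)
The plan is to separate the lemma into existence and uniqueness. The elements of $\bbP$ are exactly the finite base-$p$ expansions $\sum_i a_i p^{-i}$ with digits $a_i\in\{0,\dots,p-1\}$, almost all zero. The map $\lVert\cdot\rVert$ sends $\bbP$ bijectively onto $\bfZ[1/p]\cap[0,1)$. Since $\normd$ always lies in $\bfZ[1/p]_{\geq 0}$, the statement reduces to the following: every element of $\bfZ[1/p]$ has exactly one representative modulo $\bfZ$ in $[0,1)$, and that representative has exactly one finite base-$p$ expansion with digits in $\{0,\dots,p-1\}$.

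\textbf{Uniqueness.} Suppose $\underline a,\underline b\in\bbP$ satisfy $\lVert\underline a\rVert-\lVert\underline b\rVert\in\bfZ$. Both values lie in $[0,1)$: indeed $\lVert\underline a\rVert\leq\sum_{i=1}^N(p-1)p^{-i}=1-p^{-N}<1$, where $N$ bounds the support of $\underline a$. Hence the integer difference must be $0$, so $\lVert\underline a\rVert=\lVert\underline b\rVert$.

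It remains to show that $\underline a=\underline b$. Suppose not, and let $i$ be the smallest index with $a_i\neq b_i$, say $a_i>b_i$. Then
$$0=\sum_{j\geq i}(a_j-b_j)p^{-j}\geq p^{-i}-\sum_{j>i}^{N}(p-1)p^{-j}=p^{-N}>0,$$
which is a contradiction. An alternative route is to multiply by $p^N$ and invoke uniqueness of base-$p$ digits of natural numbers, for instance \lean{Nat.digits} in the formalization.

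\textbf{Existence.} Choose $N$ such that $\uld_i=0$ for all $i>N$. Then $p^N\normd=\sum_{i\leq N}\uld_i p^{N-i}$ is a natural number $M$. Let $r$ be the residue of $M$ modulo $p^N$, so $0\leq r<p^N$. Write $r$ in base $p$ as $r=\sum_{k=0}^{N-1}c_k p^k$ with $c_k\in\{0,\dots,p-1\}$. Define $\tau(\uld)\in\bbP$ by $\tau(\uld)_i=c_{N-i}$ for $1\leq i\leq N$ and $\tau(\uld)_i=0$ otherwise. Then $\lVert\tau(\uld)\rVert=r/p^N$, and
$$\normd-\lVert\tau(\uld)\rVert=\frac{M-r}{p^N}\in\bfZ.$$

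\textbf{Alternative existence argument.} One can also argue by carrying, by induction on $\Psi(\uld)$ together with the largest index where some $\uld_i\geq p$. If $\uld_i\geq p$ for some $i\geq 2$, replace $\uld_i$ by $\uld_i-p$ and $\uld_{i-1}$ by $\uld_{i-1}+1$; this preserves $\normd$. If $\uld_1\geq p$, subtract $p$ from it; this changes $\normd$ by the integer $1$. Each step strictly decreases $\Psi$, so the process terminates in $\bbP$. This version may be more convenient later, since it ties in with the ``carry'' language of \Cref{def:15658}, but the residue argument above is cleaner.

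\textbf{Main obstacle.} I expect the main (mild) obstacle to be bookkeeping rather than mathematics. One must choose the truncation level $N$ uniformly and check that the resulting element has finite support, so that it genuinely lies in $\bbP\subset\bigoplus_{\bfZ_{\geq 1}}\bfN$. One must also handle the strict bound $\lVert\underline a\rVert<1$ cleanly, since it is what forces the integer difference in the uniqueness step to vanish.
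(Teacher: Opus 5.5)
Your proof is correct and follows the same idea as the paper: $\tau(\uld)$ is the finite base-$p$ digit string of the fractional part of $\normd$. Your residue of $p^N\normd$ modulo $p^N$ is exactly that fractional part scaled by $p^N$. Uniqueness comes from $\lVert\cdot\rVert$ being injective on $\bbP$ with values in $[0,1)$. You simply write out what the paper calls trivial, and your carrying variant is the same descent the paper uses for \Cref{lem:17816}.
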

\begin{proof}
    If one writes $\normd$ as a decimal expansion\footnote{We do not allow infinite strings of $(p-1)$ in the decimal expansion of $\normd$ to ensure the uniqueness of $\tau(\uld)$.
    } in base $p$:
    $$\normd=w.d_1\cdots d_n\cdots,$$
    where $w\in\bfZ$ and $d_i\in\{0,1,\cdots,p-1\}$ for any $i$, then $\tau(\uld)=0.d_1\cdots d_n\cdots$. The uniqueness is trivial.
\end{proof}
We collect several properties of these concepts in the following lemma:
\begin{lemma}\label{lem:18813}
    Let $\uld,\underline{e}\in\bigoplus_{\bfZ_{\geq 1}}\bfN$.
    \begin{lemenum}
        \item\label{it:28363} The maps $\Psi(\cdot)$ and $\lVert \cdot\rVert$ are additive. Moreover, $\norm{\cdot}$ is injective when restricted to $\bbP$.
        \item\label{it:52935} One has $\tau(\uld)=\tau(\underline{e})$ if and only if $\normd-\lVert \underline{e}\rVert\in\bfZ$.
        \item $\uld\in\bbP$ if and only if $\tau(\uld)=\uld$.
        \item\label{lem:17816} One has $\Psi(\tau(\uld))\leq \Psi(\uld)$, with equality if and only if $\uld\in\bbP$.
    \end{lemenum}
\end{lemma}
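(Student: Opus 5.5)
The four items are elementary, so I would prove them in the order stated. The main tool is a \emph{carry operation}. It replaces one coordinate $d_i\geq p$ (with $i\geq 2$) by $d_i-p$ and increases $d_{i-1}$ by $1$. It preserves $\normd$ and lowers $\Psi$ by $p-1$. When $i=1$, subtracting $p$ from $d_1$ changes $\normd$ by the integer $-1$ and lowers $\Psi$ by $p$.

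For (1), additivity of $\Psi$ and of $\norm{\cdot}$ is immediate from their definitions as finite sums of linear functionals in the coordinates. For injectivity on $\bbP$: an element of $\bbP$ with $\norm{\uld}=x$ is a terminating base-$p$ expansion of $x$ with digits in $\{0,\dots,p-1\}$. Terminating expansions never end in infinitely many $(p-1)$'s, so uniqueness of base-$p$ expansions gives injectivity. More concretely, if $\uld\neq\underline{e}$ in $\bbP$, take the first index $i$ with $d_i\neq e_i$. The tail difference after that index is strictly less than $p^{-i}$ in absolute value, because it is a finite sum of terms of size at most $(p-1)p^{-j}$. So $\norm{\uld}\neq\norm{\underline{e}}$.

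For (2), the definition gives $\normd\equiv\norm{\tau(\uld)}$ and $\norm{\underline{e}}\equiv\norm{\tau(\underline{e})}$ modulo $\bfZ$. If $\tau(\uld)=\tau(\underline{e})$, then $\normd-\norm{\underline{e}}\in\bfZ$. Conversely, if $\normd-\norm{\underline{e}}\in\bfZ$, then $\tau(\underline{e})$ also satisfies the defining property of $\tau(\uld)$, so uniqueness in the preceding lemma gives equality. For (3), if $\uld\in\bbP$, then $\uld$ itself satisfies the defining property, so $\tau(\uld)=\uld$ by uniqueness; the converse holds because $\tau$ takes values in $\bbP$.

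The main step is (4). I would argue by strong induction on $\Psi(\uld)$, or equivalently on $\sum_i \max(d_i-(p-1),0)$ weighted suitably. If $\uld\in\bbP$, then $\tau(\uld)=\uld$ by (3), and equality holds. Otherwise some $d_i\geq p$, and I apply one carry operation to obtain $\uld'$. Then $\Psi(\uld')<\Psi(\uld)$, and $\norm{\uld'}\equiv\normd$ modulo $\bfZ$, so $\tau(\uld')=\tau(\uld)$ by (2). The induction hypothesis gives $\Psi(\tau(\uld))=\Psi(\tau(\uld'))\leq\Psi(\uld')<\Psi(\uld)$. This proves the inequality and shows it is strict when $\uld\notin\bbP$, which yields the ``only if'' direction of the equality case. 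The ``if'' direction is (3). The only subtlety is termination, which is handled by inducting on the natural number $\Psi(\uld)$, since each carry strictly decreases it.
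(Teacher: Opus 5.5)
Your proposal is correct and takes essentially the same route as the paper. Items (1)--(3) follow from uniqueness of $\tau$, and (4) is a carry/descent argument in which each carry preserves $\normd$ modulo $\bfZ$ while strictly lowering $\Psi$. The only difference is the induction measure: the paper descends on $N(\uld)$, the largest index with a coordinate $\geq p$, carrying fully at that index, whereas you induct on $\Psi(\uld)$ with single carries and handle the index-$1$ case (carry into the integer part) explicitly, which is slightly cleaner than the paper's write-up.
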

\begin{proof}
    The first three statements are straightforward. We only prove the last statement.

    For any $\uld\in\bigoplus_{\bfZ_{\geq 1}}\bfN$, let
    $$N(\uld)\coloneqq\begin{cases}
            0,                   & \text{ if }\uld\in\bbP; \\
            \max\{i|d_i\geq p\}, & \text{ otherwise}.
        \end{cases}$$
    By a descent argument on $N(\uld)$, it suffices to show that if $N(\underline{d})\geq 1$, then there exists $\underline{e}\in\bigoplus_{\bfZ_{\geq 1}}\bfN$ such that $\normd-\lVert \underline{e}\rVert\in\bfZ$, $\Psi(\underline{e})<\Psi(\uld)$, and $N(\underline{e})<N(\underline{d})$.

    Writing $d_{N(\uld)}=p\cdot r + s$ with $r\in\bfN$ and $s\in\{0,1,\cdots,p-1\}$, we take $\underline{e}\coloneqq (d_0,\cdots,d_{N(\uld)-2},d_{N(\uld)-1}+r,s,0,\cdots)$. Then
    $$\Psi(\uld)-\Psi(\underline{e})=d_{N(\uld)}-r-s= (p-1)\cdot r>0$$
    and $\normd-\lVert \underline{e}\rVert=0$. The result follows.
\end{proof}
\begin{remark}
    For a rational number $q$ in $[0,1)$ with a finite-length decimal expansion in base $p$, the preimage $\underline{q}\in\bbP$ of $q$ under the map $\norm{\cdot}$ extracts the digits of $q$ in base $p$, and $\Psi(\underline{q})$ is the $p$-digit sum of $q$.
\end{remark}
\begin{definition}\label{def:38144}
    Let $p$ be a prime number. Let $c, n \geq 1$ be integers. A subset $S\subset\bbP$ is \textbf{$(c,n)$-sparse} if
    \begin{enumerate}
        \item there exists $c\geq 1$ such that $\Psi(\uld)\leq c$ for every $\uld\in S$;
        \item there exist $n$ (not necessarily distinct) elements $\uld^{(1)}, \uld^{(2)}, \cdots, \uld^{(n)}\in S$ such that
              \begin{enumerate}
                  \item $\Psi\left(\uld^{(i)}\right)=c$ for $i=1,2,\cdots,n$ and $\sum_{i=1}^n \uld^{(i)}_j< p$ for $j\in\bfN$ (that is, $\sum_{i=1}^n \uld^{(i)}\in\bbP$);
                  \item if $\underline{e}^{(1)}, \underline{e}^{(2)}, \cdots, \underline{e}^{(n)}\in S$ satisfy $\left\lVert\sum_{i=1}^n \uld^{(i)}\right\rVert-\left\lVert\sum_{i=1}^n \underline{e}^{(i)}\right\rVert\in\bfZ$, then up to a permutation of $\{1,2,\cdots,n\}$, $\uld^{(i)}=\underline{e}^{(i)}$ for every $i=1,2,\cdots,n$.
              \end{enumerate}
    \end{enumerate}
\end{definition}
The following lemma reformulates the sparseness condition in \Cref{def:15658} in terms of the $(c,n)$-sparseness condition in \Cref{def:38144}.
\begin{lemma}\label{lem:54409}
    A subset $W\neq\{0\}$ of $[0,1)\cap\bfQ$ is sparse in the sense of \Cref{def:15658} if and only if there exists a subset $S$ of $\bbP$ such that $W=\norm{S}$ and there exists an integer $c\geq 1$ such that $S$ is $(c,n)$-sparse for infinitely many integers $n\geq 1$.
\end{lemma}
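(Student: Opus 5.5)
The proof is a dictionary translation between rational numbers in $[0,1)$ with finite base-$p$ expansion and elements of $\bbP$, via the map $\norm{\cdot}$. First I will establish the basic bridge: for $\uld\in\bbP$ one has $\frakN_p(\normd)=\Psi(\uld)$. This holds because the components of $\uld$ are exactly the base-$p$ digits of $\normd$, as in the remark following \Cref{lem:18813}. Such an expansion is finite, so it is automatically the admissible one with no infinite tail of $(p-1)$'s. Conversely, if $q\in[0,1)\cap\bfQ$ has $\frakN_p(q)<\infty$, then only finitely many digits $q_i$ are nonzero. Hence $q=\norm{\underline{q}}$ for the digit vector $\underline{q}\in\bbP$, and this vector is unique by the injectivity in \Cref{it:28363}. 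So $\norm{\cdot}$ is a bijection from $\bbP$ onto the set of $q\in[0,1)\cap\bfQ$ with finite digit sum, and it carries $\Psi$ to $\frakN_p$.

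For the forward direction, let $W\neq\{0\}$ be sparse. Condition (1) gives $c\coloneqq\opn{dom}_p(W)<\infty$, so every element of $W$ has finite digit sum. I then define $S\coloneqq\{\underline{q}\colon q\in W\}\subset\bbP$, so that $W=\norm{S}$ and $\Psi\le c$ on $S$. To see $c\ge1$, note that $W\subset[0,1)$ and $W\neq\{0\}$ (with $W$ nonempty, since condition (2) supplies elements), so some $q\in W$ is nonzero and hence $\frakN_p(q)\ge1$. For each of the infinitely many $n$ in condition (2), I take the corresponding $\uld^{(i)}\coloneqq\underline{d_i}$; membership in $\opn{Dom}_p(W)$ translates to $\Psi(\uld^{(i)})=c$. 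The next step is to interpret ``no carry when adding $d_1,\dots,d_n$'' precisely as $\sum_i\uld^{(i)}_j<p$ for all $j$, i.e.\ $\sum_i\uld^{(i)}\in\bbP$. For (2b), given $\underline{e}^{(i)}\in S$, additivity of $\norm{\cdot}$ from \Cref{it:28363} gives $\norm{\sum_i\uld^{(i)}}-\norm{\sum_i\underline{e}^{(i)}}=\sum_i d_i-\sum_i e_i$ with $e_i=\norm{\underline{e}^{(i)}}\in W$. The hypothesis therefore yields $d_i=e_{\sigma(i)}$, and injectivity on $\bbP$ upgrades this to $\uld^{(i)}=\underline{e}^{(\sigma(i))}$.

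The converse runs the same translation backwards. Suppose $S$ is $(c,n)$-sparse for infinitely many $n$. Then $\frakN_p\le c$ on $W=\norm{S}$, and any chosen $\uld^{(1)}$ (which exists since $n\ge1$) attains $c$. Hence $\opn{dom}_p(W)=c$ is finite, and the elements $\norm{\uld^{(i)}}$ lie in $\opn{Dom}_p(W)$. Condition (2b) for $W$ follows from (2b) for $S$ by the same additivity identity. The no-carry condition (2a) is again the digitwise bound.

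I expect the main obstacle to be bookkeeping rather than depth. The first issue is pinning down ``no carry'' as the digitwise inequality. The second is checking the identity $\frakN_p(\normd)=\Psi(\uld)$ against the normalization in \Cref{def:25951}, which excludes infinite tails of $(p-1)$'s. The third is ensuring $c\ge1$, which is exactly where the hypothesis $W\neq\{0\}$ is used.
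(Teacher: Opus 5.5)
Your proposal is correct and follows the same route as the paper. Both take $S$ to be the set of base-$p$ digit vectors of elements of $W$ with $c=\opn{dom}_p(W)$, then translate each sparseness condition through the identity $\frakN_p(\normd)=\Psi(\uld)$ on $\bbP$ and the additivity and injectivity of $\norm{\cdot}$. You also spell out details the paper leaves implicit, notably the use of $W\neq\{0\}$ to get $c\geq 1$ and the converse direction.
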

\begin{proof}
    Suppose that $W\subset [0,1)\cap\bfQ$ is sparse. Then the $p$-digit sum of every element in $W$ is finite and bounded by $\opn{dom}_p(W)$. Thus we may take
    $$S\coloneqq \left\{(q_i)_{i\in\bfZ_{\geq 1}}|q=\sum_{i=0}^\infty q_i p^{-i}\in W,\ q_i\in\{0,\cdots,p-1\}\right\}.$$
    Then $S$ is $(\opn{dom}_p(W),n)$-sparse for infinitely many integers $n\geq 1$.

    The converse direction follows by a similar argument.
\end{proof}

We give a concrete example of a sparse set, which is the prototype of the situation in which the sparseness condition is satisfied:
\begin{example}\label{eg:new}
    Let $\underline{A}\coloneqq(A_i)_{i\geq 1}$ be a family of pairwise disjoint nonempty subsets of $\bfZ_{\geq 1}$ such that $\sup_i |A_i|<\infty$ and this supremum is attained by infinitely many $i$. Then the set
    $$M(\underline{A})\coloneqq\left\{\sum_{r\in A_i}p^{-r}\middle\vert i=1,2,\cdots\right\}\subset [0,1)\cap\bfQ$$
    is sparse.
\end{example}
\begin{proof}
    For any $i$, one has $\frakN_p(\sum_{r\in A_i}p^{-r})=|A_i|$, and consequently $\opn{dom}_p(M(\underline{A}))=\sup_i |A_i|<\infty$.  Since the supremum is attained by infinitely many $i$, the set $\opn{Dom}_p(M(\underline{A}))$ is infinite:
    $$\opn{Dom}_p(M(\underline{A}))=\left\{\mathbf{d}_j\coloneqq \sum_{r\in A_{i_j}}p^{-r}\middle\vert j=1,2,\cdots\right\},$$
    with $\mathbf{d}_k\neq \mathbf{d}_l$ for any $k\neq l$.%

    Take $e_1,\cdots,e_n\in M(\underline{A})$ such that $\sum_{j=1}^n e_j-\sum_{j=1}^n \mathbf{d}_j\in\bfZ$. Since $M(\underline{A})\subset \norm{\bbP}$, we may write $e_j=\norm{\underline{e}^{(j)}}$ and $\mathbf{d}_j=\norm{\underline{\mathbf{d}}^{(j)}}$ with $\underline{e}^{(j)}, \underline{\mathbf{d}}^{(j)}\in \bbP$ for every $j=1,\cdots,n$.  Then one has $\sum_{j=1}^n \mathbf{d}_j=\sum_{r\in\bigsqcup_{j=1}^n A_{i_j}}p^{-r}$, and consequently $\sum_{j=1}^n \underline{\mathbf{d}}^{(j)}\in\bbP$.
    In particular,
    \begin{equation*}
        n\cdot\opn{dom}_p(M(\underline{A}))=\frakN_p\left(\sum_{r\in\bigsqcup_{j=1}^n A_{i_j}}p^{-r}\right)=\frakN_p\left(\sum_{j=1}^n \mathbf{d}_j\right)=\Psi\left(\sum_{j=1}^n \underline{\mathbf{d}}^{(j)}\right).
    \end{equation*}
    Since
    \begin{align*}
        \Psi\left(\sum_{j=1}^n \underline{\mathbf{d}}^{(j)}\right)=\Psi\left(\tau\left(\sum_{j=1}^n \underline{\mathbf{d}}^{(j)}\right)\right) & =\Psi\left(\tau\left(\sum_{j=1}^n \underline{e}^{(j)}\right)\right)                                                                                 \\
                                                                                                                                                   & \leq \Psi\left(\sum_{j=1}^n \underline{e}^{(j)}\right)=\sum_{j=1}^n \Psi\left(\underline{e}^{(j)}\right)\leq n\cdot\opn{dom}_p(M(\underline{A})),
    \end{align*}
    one concludes that
    $\Psi\left(\underline{e}^{(j)}\right)=\opn{dom}_p(M(\underline{A}))$ for every $j=1,\cdots,n$ and $\sum_{j=1}^n \underline{e}^{(j)}\in\bbP$. Consequently, one has $\sum_{j=1}^n \underline{e}^{(j)}=\sum_{j=1}^n \underline{\mathbf{d}}^{(j)}$, implying that $\sum_{j=1}^n e_j=\sum_{j=1}^n \mathbf{d}_j$.

    Note that there is no duplication among $e_1,\cdots,e_n$: if not, then there exists a coordinate of $\sum_{j=1}^n \underline{e}^{(j)}$ that is at least $2$, contradicting the fact that every coordinate of $\sum_{j=1}^n \underline{\mathbf{d}}^{(j)}$ is $0$ or $1$. If one writes $e_j=\sum_{r\in A_{k_j}}p^{-r}$ for every $j=1,\cdots,n$, then
    $$\sum_{r\in\bigsqcup_{j=1}^n A_{k_j}}p^{-r}=\sum_{j=1}^n e_j=\sum_{j=1}^n \mathbf{d}_j=\sum_{r\in\bigsqcup_{j=1}^n A_{i_j}}p^{-r},$$
    implying that $\bigsqcup_{j=1}^n A_{k_j}=\bigsqcup_{j=1}^n A_{i_j}$. This forces $A_{k_j}=A_{i_j}$ and consequently $e_j=\mathbf{d}_j$ for every $j=1,\cdots,n$, up to a permutation of $\{1,\cdots,n\}$.
\end{proof}

We end this section with the following technical lemma, which will be used in the proof of \Cref{thm:main} to extract the orphaned exponents in the multinomial expansion.
\begin{lemma}\label{lem:39609}
    Let $S\subset\bbP$ be a $(c,n)$-sparse subset for some integers $c, n\geq 1$. Let $\underline{d}^{(1)},\cdots,\underline{d}^{(n)}$ be elements of $S$ that satisfy the condition (2) of \Cref{def:38144} and set
    $$\phi_0\colon S\lto\bfN,\ \underline{d}\longmapsto \opn{card}\left(\left\{i\vert 1\leq i\leq n,\ \uld=\uld^{(i)}\right\}\right).$$
    Then $\phi_0$ is the unique function $\phi\colon S\to\bfN$ such that $\sum_{\uld\in S}\phi(\uld)\leq n$ and
    $$\sum_{\uld\in S}\normd\cdot\phi(\uld)\equiv \sum_{\uld\in S}\normd\cdot\phi_0(\uld)\pmod{\bfZ}.$$
\end{lemma}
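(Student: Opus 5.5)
The plan is to check that $\phi_0$ itself satisfies the two conditions, and then to show that any $\phi$ satisfying them has total mass exactly $n$, so that condition (2b) of \Cref{def:38144} applies directly. Throughout, the sums $\sum_{\uld\in S}\normd\cdot\phi(\uld)$ make sense because $\sum_{\uld}\phi(\uld)\leq n$ forces $\phi$ to have finite support. For $\phi_0$, one has $\sum_{\uld}\phi_0(\uld)=n$ by construction, and the congruence is trivial.

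Now let $\phi\colon S\to\bfN$ satisfy $m\coloneqq\sum_{\uld\in S}\phi(\uld)\leq n$ and the congruence. Form the finitely supported element $\underline{E}\coloneqq\sum_{\uld\in S}\phi(\uld)\,\uld\in\bigoplus_{\bfZ_{\geq1}}\bfN$, and set $\underline{D}\coloneqq\sum_{i=1}^n\uld^{(i)}=\sum_{\uld}\phi_0(\uld)\,\uld$. By additivity of $\norm{\cdot}$ (\Cref{it:28363}), the congruence says exactly that $\norm{\underline{D}}-\norm{\underline{E}}\in\bfZ$. Hence $\tau(\underline{D})=\tau(\underline{E})$ by \Cref{it:52935}. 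By condition (2a), $\underline{D}\in\bbP$, so $\tau(\underline{D})=\underline{D}$.

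The key step is the digit-sum comparison. By additivity of $\Psi$ and \Cref{lem:17816},
$$nc=\sum_{i=1}^n\Psi\bigl(\uld^{(i)}\bigr)=\Psi(\underline{D})=\Psi(\tau(\underline{E}))\leq\Psi(\underline{E})=\sum_{\uld}\phi(\uld)\Psi(\uld)\leq mc,$$
where the last inequality uses condition (1), namely $\Psi\leq c$ on $S$. Since $c\geq1$, this gives $m\geq n$, and therefore $m=n$.

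Having $m=n$, I list the elements of $S$ with multiplicities $\phi(\uld)$ as $\underline{e}^{(1)},\dots,\underline{e}^{(n)}\in S$, so that $\sum_i\underline{e}^{(i)}=\underline{E}$. These satisfy the hypothesis of condition (2b) of \Cref{def:38144}. Hence, up to a permutation, $\underline{e}^{(i)}=\uld^{(i)}$ for all $i$. Counting multiplicities on both sides then gives $\phi(\uld)=\phi_0(\uld)$ for every $\uld\in S$.

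The only real obstacle is the possibility $m<n$, since (2b) is stated only for exactly $n$ elements. The $\Psi$-inequality chain above is what rules this case out, and it is where both (2a) and the bound $\Psi\leq c$ are essential.
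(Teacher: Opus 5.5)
Your proof is correct and follows essentially the same route as the paper. In both, the chain $\Psi(\tau(\cdot))\leq\Psi(\cdot)$ from \Cref{lem:17816}, combined with (2a), rules out total mass $m<n$: the paper phrases this as a contradiction (the two $\tau$-values would differ), while you derive $nc\leq mc$ directly. Then (2b), applied to the list of elements repeated with multiplicity, settles the case $m=n$.
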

\begin{proof}
    Suppose that $\phi_1\colon S\to\bfN$ is another function such that $\sum_{\uld\in S}\phi_1(\uld)\leq n$ and
    $$\sum_{\uld\in S}\normd\cdot\phi_1(\uld)\equiv \sum_{\uld\in S}\normd\cdot\phi_0(\uld)\pmod{\bfZ}.$$
    \begin{enumerate}
        \item If $\sum_{\uld\in S}\phi_1(\uld)< n$, then by \Cref{lem:17816} we have
              $$\Psi\left(\tau\left(\sum_{\uld\in S}\uld\cdot\phi_1(\uld)\right)\right)\leq\Psi\left(\sum_{\uld\in S}\uld\cdot\phi_1(\uld)\right)=\sum_{\uld\in S}\Psi(\uld)\cdot\phi_1(\uld)<c\cdot n.$$
              On the other hand, one has $\sum_{j=1}^n\uld_i^{(j)}<p$ for any $i$ by the sparseness condition (2a) of \Cref{def:38144}. Hence
              $$\sum_{\uld\in S}\uld\cdot\phi_0(\uld)=\left(\sum_{j=1}^n\uld_i^{(j)}\right)_{i\in\bfZ_{\geq 0}}=\tau\left(\sum_{\uld\in S}\uld\cdot\phi_0(\uld)\right)$$
              and consequently
              $$\Psi\left(\tau\left(\sum_{\uld\in S}\uld\cdot\phi_0(\uld)\right)\right)=\sum_{i=0}^\infty \left(\sum_{j=1}^n\uld_i^{(j)}\right)=\sum_{j=1}^n \Psi(\uld^{(j)})=n\cdot c.$$
              Thus, we have $\tau\left(\sum_{\uld\in S}\phi_1(\uld)\cdot\uld\right)\neq \tau\left(\sum_{\uld\in S}\uld\cdot\phi_0(\uld)\right)$, which leads to a contradiction by \Cref{it:52935}.
        \item If $\sum_{\uld\in S}\phi_1(\uld)= n$, then we take $\underline{e}^{(1)}, \underline{e}^{(2)}, \cdots, \underline{e}^{(n)}\in S$ such that for every $\uld\in S$, $\uld$ appears exactly $\phi_1(\uld)$ times in the list $\underline{e}^{(1)}, \underline{e}^{(2)}, \cdots, \underline{e}^{(n)}$. Then
              $$\sum_{\uld\in S}\normd\cdot\phi_1(\uld)-\sum_{\uld\in S}\normd\cdot\phi_0(\uld)=\left\lVert\sum_{\uld\in S}\uld\cdot\phi_1(\uld)\right\rVert-\left\lVert\sum_{\uld\in S}\uld\cdot\phi_0(\uld)\right\rVert=\left\lVert\sum_{i=1}^n\underline{e}^{(i)}\right\rVert-\left\lVert\sum_{i=1}^n\uld^{(i)}\right\rVert\in\bfZ.$$
              By the sparseness condition (2b) of \Cref{def:38144}, up to a permutation of $\{1,2,\cdots,n\}$, $\uld^{(i)}=\underline{e}^{(i)}$ for any $i=1,2,\cdots,n$. This implies that $\phi_1=\phi_0$.
    \end{enumerate}
\end{proof}
\section{$T$-scaled realization of $\bfL_p$}\label{sec:tscaled}
To prove \Cref{thm:main}, one must group the terms of a $p$-adic Hahn series $f=\sum_{q\in\bfQ}[f(q)]p^q$ in $\bfL_p$  by the residue of the exponent modulo $\frac{1}{T}\bfZ$ for some integer $T\geq 1$. This works when $T=1$, since a direct computation shows that the element
$$\sum_{q\in\opn{Supp}(f)/\bfZ}\left(\sum_{w\in\bfZ}f(q+w)p^w\right)t^q \in \breve{\bfZ}_p\pparen{t^\bfQ}$$
is a preimage of $f$ under the natural projection $\breve{\bfZ}_p\pparen{t^\bfQ}\to \bfL_p$, where
$$\opn{Supp}(f)/\bfZ\coloneqq\left\{\inf\left(\opn{Supp}(f)\cap (q+\bfZ)\right)|q\in\opn{Supp}(f)\right\}\subset\bfQ$$
is a well-ordered set of representatives of $\opn{Supp}(f)$ modulo $\bfZ$. However, when $T>1$, the same construction fails, because the element $\sum_{w\in\frac{1}{T}\bfZ}f(q+w)p^w$ does not necessarily lie in $\breve{\bfZ}_p$, so a direct analogue of the above construction is not well-defined\footnote{This issue was detected during the formalization of the proof of the main theorem in Lean 4.}. To resolve this, we enlarge the ring $\breve{\bfZ}_p=W(\overline{\bfF}_p)$ to include $p^{1/T}$, and realize $\bfL_p$ as a quotient of $\breve{\bfZ}_p[p^{1/T}]\pparen{t^\bfQ}$ by a suitable ideal. This is the main content of this section.

From now on, $T$ will be a positive integer. We set $\breve{\bfZ}_{p,T}\coloneqq \breve{\bfZ}_p[p^{1/T}]$ and $\breve{\bfQ}_{p,T}\coloneqq \breve{\bfQ}_p(p^{1/T})$.
\begin{lemma}\label{lem:61648}
    One has $\left[\breve{\bfQ}_{p,T}\colon\breve{\bfQ}_p\right]=T$. In particular, $1, p^{1/T},\cdots, p^{(T-1)/T}$ form a basis of $\breve{\bfQ}_{p,T}$ over $\breve{\bfQ}_p$.
\end{lemma}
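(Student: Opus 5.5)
The plan is to show that $X^T-p$ is irreducible over $\breve{\bfQ}_p$; then $p^{1/T}$ has degree $T$, and the powers $1,p^{1/T},\cdots,p^{(T-1)/T}$ form a basis. The natural tool is the valuation. $\breve{\bfQ}_p=W(\overline{\bfF}_p)[p^{-1}]$ is a complete discretely valued field with uniformizer $p$, and its valuation $v_p$ takes values in $\bfZ$ on $\breve{\bfQ}_p^\times$. This holds because every nonzero element of $W(\overline{\bfF}_p)$ has the form $p^k u$ with $u$ a unit.

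Concretely, I argue by ramification. Let $\alpha=p^{1/T}$ be a root of $X^T-p$ in an algebraic closure, or inside $\bfL_p$, where $p^{1/T}$ makes sense as a Hahn series. Set $K=\breve{\bfQ}_p(\alpha)$ and $m=[K:\breve{\bfQ}_p]$. Since $\breve{\bfQ}_p$ is complete, $v_p$ extends uniquely to $K$, and we have $v_p(\alpha)=1/T$. So the value group of $K$ contains $\frac1T\bfZ$, and the ramification index satisfies $e\ge T$. The fundamental inequality $e f\le m$ then gives $T\le e\le m$. On the other hand, $m\le T$ because $\alpha$ is a root of a degree-$T$ polynomial. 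Hence $m=T$, $X^T-p$ is the minimal polynomial of $\alpha$, and the basis statement follows.

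A more elementary alternative is to note that $X^T-p$ is Eisenstein at $p$ over the DVR $\breve{\bfZ}_p$. Eisenstein's criterion gives irreducibility over $\breve{\bfZ}_p$, and Gauss's lemma transfers it to $\mathrm{Frac}(\breve{\bfZ}_p)=\breve{\bfQ}_p$. This avoids the extension of valuations entirely. It is also likely the easier route to formalize, since Mathlib has Eisenstein irreducibility over integrally closed domains and Witt vectors over a perfect field form a DVR.

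The only real obstacle is bookkeeping. One must identify $\breve{\bfZ}_p$ as a DVR with uniformizer $p$, i.e.\ check that $p$ is prime and not a square of a nonunit up to units, which gives the Eisenstein conditions $p\nmid 1$ and $p^2\nmid p$. One must also make sure that $\breve{\bfQ}_{p,T}$, defined by adjoining $p^{1/T}$, is the simple extension generated by a root of $X^T-p$, so that its degree equals the degree of the minimal polynomial.
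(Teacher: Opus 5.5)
Your proof is correct. Your ``more elementary alternative'' is exactly the paper's argument: the paper notes that $\breve{\bfQ}_p$ is a discretely valued field with uniformizer $p$ and that $p^{1/T}$ is a root of the Eisenstein polynomial $X^T-p$, so the degree is $T$. Your primary route is genuinely different. Instead of certifying irreducibility of the polynomial, you bound $m=[\breve{\bfQ}_{p,T}:\breve{\bfQ}_p]$ from below by the ramification index, using $v_p(p^{1/T})=1/T$ and the chain $T\le e\le ef\le m\le T$. This costs a little more machinery: an extension of $v_p$ to $\breve{\bfQ}_{p,T}$ and the fundamental inequality. You do not actually need completeness or uniqueness of the extension, since any extension works, and if you realize $p^{1/T}$ inside $\bfL_p$ the Hahn valuation already supplies one. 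In return it gives more than the degree: $\breve{\bfQ}_{p,T}/\breve{\bfQ}_p$ is totally ramified with $e=T$ and $f=1$. So $p^{1/T}$ is a uniformizer, the value group is $\frac1T\bfZ$, and the residue field is still $\overline{\bfF}_p$. The Eisenstein route needs only that $\breve{\bfZ}_p$ is a DVR with uniformizer $p$; since $X^T-p$ is monic, irreducibility over $\breve{\bfZ}_p$ passes to $\breve{\bfQ}_p$. That makes it the shorter argument and, as you anticipate, the one better suited to Mathlib.
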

\begin{proof}
    Since $\breve{\bfQ}_p$ is a discrete valuation field and $p^{1/T}$ is a root of the Eisenstein polynomial $X^T-p$, the result follows from the Eisenstein criterion.
\end{proof}
\begin{remark}
    $\breve{\bfZ}_p\pparen{t^\bfQ}$ is a subfield of $\breve{\bfZ}_{p,T}\pparen{t^\bfQ}$ via the natural inclusion $\breve{\bfZ}_p\subseteq \breve{\bfZ}_{p,T}$.
\end{remark}
\begin{definition}
    Let $\calN_T$ be the set of elements $\sum_{q\in\bfQ}c_q t^q\in \breve{\bfZ}_{p,T}\pparen{t^\bfQ}$ such that for any $q\in\bfQ$,
    $$\sum_{n\in\bfZ}c_{q+\frac{n}{T}}p^{\frac{n}{T}}=0.$$
    We call these elements the \textbf{$T$-null-series} in $\breve{\bfZ}_{p,T}\pparen{t^\bfQ}$.
\end{definition}
\begin{remark}
    When $T=1$, $\calN_T$ coincides with the ideal $\calN$ of null-series in $\breve{\bfZ}_p\pparen{t^\bfQ}$ defined in \Cref{prop:49318}.
\end{remark}
\begin{lemma}\leavevmode
    \begin{enumerate}
        \item $\calN_T$ is an ideal of $\breve{\bfZ}_{p,T}\pparen{t^\bfQ}$.
        \item For every element $f$ of $\breve{\bfZ}_{p,T}\pparen{t^\bfQ}$, there exists a unique element $g=\sum_{q\in\bfQ}[g(q)]t^q$ in $\breve{\bfZ}_{p,T}\pparen{t^\bfQ}$ such that $f-g\in\calN_T$.
        \item $\calN_T$ is a maximal ideal of $\breve{\bfZ}_{p,T}\pparen{t^\bfQ}$, so that $\breve{\bfZ}_{p,T}\pparen{t^\bfQ}/\calN_T$ is a field.
    \end{enumerate}
\end{lemma}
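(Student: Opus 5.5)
The plan is to treat each $q$-slice through the ``evaluation'' map. For $f=\sum_q c_q t^q\in\breve{\bfZ}_{p,T}\pparen{t^\bfQ}$ and a class $\bar q\in\bfQ/\frac1T\bfZ$, I would set
$$\epsilon_{\bar q}(f)\coloneqq\sum_{n\in\bfZ}c_{q+\frac nT}p^{\frac nT}\in\breve{\bfQ}_{p,T}.$$
This sum converges. Because $\opn{Supp}(f)$ is well-ordered, only finitely many negative $n$ occur, and $p^{n/T}\to 0$ as $n\to\infty$. Changing the representative $q$ multiplies $\epsilon$ by a power of $p^{1/T}$. Thus $f\in\calN_T$ if and only if $\epsilon_{\bar q}(f)=0$ for every class.

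For part (1), closure of $\calN_T$ under addition and negation is immediate from the additivity of $\epsilon_{\bar q}$. For multiplication, I would show
$$\epsilon_{\bar q}(fg)=\sum_{\bar a+\bar b=\bar q}\epsilon_{\bar a}(f)\,\epsilon_{\bar b}(g),$$
with suitable representative normalizations. This comes from expanding the Cauchy product and regrouping by residues modulo $\frac1T\bfZ$. The regrouping of these doubly-infinite sums is justified by $p^{1/T}$-adic convergence, together with the fact that the well-ordered supports make each coefficient a finite sum and leave only finitely many classes below any bound. Once the formula holds, $f\in\calN_T$ forces every $\epsilon_{\bar a}(f)=0$, hence $\epsilon_{\bar q}(fg)=0$. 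I expect this bookkeeping to be the main technical obstacle.

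For part (2), I work class by class.
\begin{itemize}
\item \emph{Existence.} Fix a class $\bar q$ whose slice of $f$ is nonzero, and let $q_0$ be the minimal exponent of $\opn{Supp}(f)$ in that class. Then $\epsilon_{\bar q}(f)$, computed with representative $q_0$, lies in $\breve{\bfZ}_{p,T}$. Every element $x$ of $\breve{\bfZ}_{p,T}$ has a unique expansion $x=\sum_{n\geq 0}[a_n]p^{n/T}$ with $a_n\in\overline{\bfF}_p$. This follows from \Cref{lem:61648} and the fact that $\breve{\bfZ}_{p,T}$ is a complete DVR with uniformizer $p^{1/T}$ and residue field $\overline{\bfF}_p$, using Teichmüller representatives. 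Set $g(q_0+\frac nT)\coloneqq a_n$. The resulting $g$ has support contained in $\bigcup_{\bar q}(q_0(\bar q)+\frac1T\bfN)$.
\item \emph{Well-ordering of $\opn{Supp}(g)$.} The set of minimal representatives is well-ordered, being a subset obtained from $\opn{Supp}(f)$. Adding the well-ordered set $\frac1T\bfN$ preserves well-ordering, since the sumset of two well-ordered sets is well-ordered.
\item \emph{The difference $f-g$.} By construction $\epsilon_{\bar q}(g)=\epsilon_{\bar q}(f)$ for every class, so $f-g\in\calN_T$.
\item \emph{Uniqueness.} If $g,g'$ both have Teichmüller coefficients and $g-g'\in\calN_T$, then for each class $\epsilon_{\bar q}(g)=\epsilon_{\bar q}(g')$. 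Uniqueness of Teichmüller expansions in $\breve{\bfZ}_{p,T}$, after shifting by a common power of $p^{1/T}$, then gives $g=g'$.
\end{itemize}

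For part (3), note first that $1\notin\calN_T$, since $\epsilon_{\bar 0}(1)=1$. To prove maximality, I would show that every $f\notin\calN_T$ is invertible modulo $\calN_T$. By (2), replace $f$ by its Teichmüller representative $g\neq0$, and let $v\coloneqq\min\opn{Supp}(g)$.

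\begin{itemize}
\item \emph{Factoring out the leading term.} Write $g=[a]t^{v}(1-h)$, where $h$ has support in $\bfQ_{>0}$ and coefficients in $\breve{\bfZ}_{p,T}$.
\item \emph{Inverting.} The geometric series $\sum_k h^k$ converges in the Hahn ring, because of the classical Neumann lemma for supports in $\bfQ_{>0}$. This gives an honest inverse of $1-h$ in $\breve{\bfZ}_{p,T}\pparen{t^\bfQ}$. Also $[a]$ is a unit, so $g$ itself is a unit.
\item \emph{Conclusion.} Hence $f\equiv g$ is invertible modulo $\calN_T$, and every ideal strictly containing $\calN_T$ is the whole ring.
\end{itemize}

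Alternatively, I would identify the quotient with $\bfL_p$ via $\epsilon$ and invoke the field structure from \Cref{prop:49318}.
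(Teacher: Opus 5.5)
Your proposal is correct and takes essentially the same approach as the paper, which defers to Poonen's Proposition 3, Proposition 4 and Corollary 3: slice-wise sums for the ideal property, minimal representatives plus Teichmüller expansion in $\breve{\bfZ}_{p,T}$ (uniformizer $p^{1/T}$) for the canonical lift, and invertibility of a Teichmüller series with unit leading coefficient for maximality. The one phrase worth tightening is the summability justification in (1). What is finite is not the set of classes met by the supports below a bound (a support such as $\{-p^{-k}\}_{k\geq 1}$ meets infinitely many), but the set of pairs $(x,y)\in\opn{Supp}(f)\times\opn{Supp}(g)$ with $x+y\in q+\frac1T\bfZ$ and $x+y<q+B$, and this is what makes the family $f_x g_y p^{x+y-q}$ summable and freely regroupable.
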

\begin{proof}
    These three statements are generalizations of \cite[Proposition 3]{poonenMAXIMALLYCOMPLETEFIELDS1993}, \cite[Proposition 4]{poonenMAXIMALLYCOMPLETEFIELDS1993} and \cite[Corollary 3]{poonenMAXIMALLYCOMPLETEFIELDS1993} respectively, and the proofs are essentially the same.
\end{proof}
\begin{remark}
    By (2) of this lemma, we will formally write elements of $\breve{\bfZ}_{p,T}\pparen{t^\bfQ}/\calN_T$ as $\sum_{q\in\bfQ}[g(q)]p^q$.
\end{remark}
\begin{lemma}\label{lem:14623}
    One has $\calN_T\cap \breve{\bfZ}_p\pparen{t^\bfQ}=\calN$.
\end{lemma}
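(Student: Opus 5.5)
We prove the two inclusions separately, working coefficientwise in the Hahn series. Throughout, fix $f=\sum_{q\in\bfQ}c_q t^q\in\breve{\bfZ}_p\pparen{t^\bfQ}$, so that every $c_q\in\breve{\bfZ}_p$.

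For the inclusion $\calN\subseteq\calN_T\cap\breve{\bfZ}_p\pparen{t^\bfQ}$, suppose $f\in\calN$ and fix $q\in\bfQ$. Split the sum defining the $T$-null condition according to the residue $r=n \bmod T$, writing $n=r+Tm$ with $0\le r\le T-1$ and $m\in\bfZ$. This gives
$$\sum_{n\in\bfZ}c_{q+\frac nT}p^{\frac nT}=\sum_{r=0}^{T-1}p^{r/T}\sum_{m\in\bfZ}c_{q+\frac rT+m}p^m .$$
Each inner sum vanishes by the defining property of $\calN$ applied at the point $q+\frac rT$. Hence $f\in\calN_T$.

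For the reverse inclusion, suppose $f\in\calN_T$ and fix $q$. The same regrouping gives
$$0=\sum_{r=0}^{T-1}p^{r/T}a_r,\qquad a_r\coloneqq\sum_{m\in\bfZ}c_{q+\frac rT+m}p^m .$$
Since $f$ has coefficients in $\breve{\bfZ}_p$, each $a_r$ lies in $\breve{\bfQ}_p$. By \Cref{lem:61648}, the elements $1,p^{1/T},\dots,p^{(T-1)/T}$ are linearly independent over $\breve{\bfQ}_p$. Therefore every $a_r=0$, and in particular $a_0=\sum_m c_{q+m}p^m=0$. As $q$ was arbitrary, $f\in\calN$.

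The main technical point is the meaning and convergence of these infinite sums, which justifies the rearrangement. Because $\opn{Supp}(f)$ is well-ordered, only finitely many indices $n<0$ (respectively $m<0$) contribute, so each sum is a $p$-adically convergent series in the complete ring $\breve{\bfZ}_{p,T}$ (respectively $\breve{\bfZ}_p$). The rearrangement by residues mod $T$ is then justified by unconditional convergence in a complete non-archimedean space. The same observation shows $a_r\in\breve{\bfZ}_p[p^{-1}]=\breve{\bfQ}_p$, since $\breve{\bfZ}_p$ is closed in $\breve{\bfZ}_{p,T}$. I expect this convergence bookkeeping to be the only step needing care; the linear independence from \Cref{lem:61648} does the algebraic work.
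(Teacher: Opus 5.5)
Your proof is correct and is essentially the paper's argument: you regroup the $T$-null sum by the residue of $n$ modulo $T$ as $\sum_{u=0}^{T-1}p^{u/T}\sum_{m\in\bfZ}c_{q+\frac{u}{T}+m}p^m$. You then apply the null condition at each $q+\frac{u}{T}$ for one inclusion, and use the linear independence of $1,p^{1/T},\dots,p^{(T-1)/T}$ over $\breve{\bfQ}_p$ from \Cref{lem:61648} for the other. Your convergence bookkeeping and your use of arbitrary coefficients $c_q\in\breve{\bfZ}_p$ (the paper writes Teichmüller lifts) are harmless refinements of what the paper leaves implicit. One small correction: the finitely many terms with negative $n$ make these sums live in $\breve{\bfQ}_{p,T}$ rather than in $\breve{\bfZ}_{p,T}$.
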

\begin{proof}
    Let $f=\sum_{q\in\bfQ}[f(q)]t^q\in\calN$, then for any $q\in\bfQ$,
    \begin{equation}\label{eq:64661}
        \sum_{n\in\bfZ}[f(q+n)]p^n=0.
    \end{equation}
    Notice that for any $q\in\bfQ$, one has
    \begin{align*}
        \sum_{n\in\bfZ}\left[f\left(q+\frac{n}{T}\right)\right]p^{\frac{n}{T}}= & \sum_{u=0}^{T-1}\sum_{\substack{n\in\bfZ                                                                  \\n\equiv u \pmod{T}}}\left[f\left(q+\frac{n}{T}\right)\right]p^{\frac{n}{T}}\\
        =                                                                       & \sum_{u=0}^{T-1}p^{\frac{u}{T}}\sum_{n\in\bfZ}\left[f\left(\left(q+\frac{u}{T}\right)+n\right)\right]p^n.
    \end{align*}
    By applying \Cref{eq:64661} to $q+\frac{u}{T}$ for $u=0,1,\cdots,T-1$, one has $\sum_{n\in\bfZ}\left[f\left(q+\frac{n}{T}\right)\right]p^{\frac{n}{T}}=0$. As a result, $f\in\calN_T$.

    Conversely, let $g=\sum_{q\in\bfQ}[g(q)]t^q\in\calN_T$. Then for any $q\in\bfQ$, one has
    $$\sum_{n\in\bfZ}\left[g\left(q+\frac{n}{T}\right)\right]p^{\frac{n}{T}}=\sum_{u=0}^{T-1}p^{\frac{u}{T}}\sum_{n\in\bfZ}\left[g\left(\left(q+\frac{u}{T}\right)+n\right)\right]p^n=0.$$
    By \Cref{lem:61648}, for any $u=0,1,\cdots,T-1$, one has $\sum_{n\in\bfZ}\left[g\left(\left(q+\frac{u}{T}\right)+n\right)\right]p^n=0$. The result follows from taking $u=0$.
\end{proof}
\begin{lemma}\label{lem:21624}
    One has $\breve{\bfZ}_p\pparen{t^\bfQ}+\calN_T=\breve{\bfZ}_{p,T}\pparen{t^\bfQ}$.
\end{lemma}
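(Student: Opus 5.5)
Given $F=\sum_{q}c_q t^q\in\breve{\bfZ}_{p,T}\pparen{t^\bfQ}$, I will construct $G\in\breve{\bfZ}_p\pparen{t^\bfQ}$ with $F-G\in\calN_T$. The construction moves the factors $p^{u/T}$ out of the coefficients and into the exponents. Since $\breve{\bfZ}_{p,T}=\breve{\bfZ}_p[p^{1/T}]$ is free over $\breve{\bfZ}_p$ with basis $1,p^{1/T},\dots,p^{(T-1)/T}$ (by \Cref{lem:61648}, together with the fact that $\breve{\bfZ}_{p,T}$ is the ring generated by $p^{1/T}$), each coefficient decomposes uniquely as
$$c_q=\sum_{u=0}^{T-1}a_{q,u}\,p^{u/T},\qquad a_{q,u}\in\breve{\bfZ}_p.$$
This gives $F=\sum_{u=0}^{T-1}p^{u/T}F_u$ with $F_u\coloneqq\sum_q a_{q,u}t^q\in\breve{\bfZ}_p\pparen{t^\bfQ}$. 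Each $F_u$ is a Hahn series, because $\opn{Supp}(F_u)\subseteq\opn{Supp}(F)$, and a subset of a well-ordered set is well-ordered.

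I then set
$$G\coloneqq\sum_{u=0}^{T-1}t^{u/T}F_u=\sum_{u}\sum_q a_{q,u}\,t^{q+u/T}.$$
This lies in $\breve{\bfZ}_p\pparen{t^\bfQ}$: its support is contained in the finite union of the translates $\opn{Supp}(F)+u/T$, and a finite union of well-ordered sets is well-ordered. Since $\calN_T$ is an ideal, it suffices to show $p^{u/T}F_u-t^{u/T}F_u\in\calN_T$ for each $u$. It is in fact enough to show $(p^{u/T}-t^{u/T})H\in\calN_T$ for every $H\in\breve{\bfZ}_{p,T}\pparen{t^\bfQ}$, which in turn reduces to the single claim $p^{u/T}-t^{u/T}\in\calN_T$.

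That claim is checked directly from the definition. The element $p^{u/T}t^0-t^{u/T}$ has two nonzero coefficients: $c_0=p^{u/T}$ and $c_{u/T}=-1$. For $q\notin\frac1T\bfZ$ the defining sum $\sum_n c_{q+n/T}p^{n/T}$ is empty. For $q=m/T$ it equals
$$c_0\,p^{-m/T}+c_{u/T}\,p^{(u-m)/T}=p^{(u-m)/T}-p^{(u-m)/T}=0.$$
(If one insists that the defining sums stay inside $\breve{\bfZ}_{p,T}$, one multiplies through by a suitable power of $p^{1/T}$; this changes nothing.) Hence $F-G\in\calN_T$, which proves $\breve{\bfZ}_p\pparen{t^\bfQ}+\calN_T=\breve{\bfZ}_{p,T}\pparen{t^\bfQ}$.

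The main obstacle is not the algebra but the bookkeeping. One must confirm that the ideal property of $\calN_T$ applies to infinite Hahn series, which it does since $\calN_T$ is an ideal of the whole ring. One must also confirm the freeness of $\breve{\bfZ}_{p,T}$ over $\breve{\bfZ}_p$ on the basis $p^{u/T}$, so that the decomposition into the $F_u$ is well defined.
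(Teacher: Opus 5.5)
Your proof is correct and is essentially the paper's argument. You split $F=\sum_{u}p^{u/T}F_u$ with $F_u\in\breve{\bfZ}_p\pparen{t^\bfQ}$ and use $p^{u/T}F_u=(p^{u/T}t^0-t^{u/T})F_u+t^{u/T}F_u$, exactly as the paper does; the paper obtains the $F_u$ from Teichmüller expansions in powers of $p^{1/T}$, while you use the basis $1,\dots,p^{(T-1)/T}$, which gives the same components by uniqueness, and your explicit check that $p^{u/T}t^0-t^{u/T}\in\calN_T$ supplies a step the paper only asserts.
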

\begin{proof}
    Take $f=\sum_{q\in\bfQ}c_q t^q\in \breve{\bfZ}_{p,T}\pparen{t^\bfQ}$, where
    $$c_q=\sum_{n=0}^\infty [c_{q,n}]p^{\frac{n}{T}}\in \breve{\bfZ}_{p,T}.$$
    Then one has $f=f_0+p^{\frac{1}{T}}f_1+\cdots+p^{\frac{T-1}{T}}f_{T-1}$, where
    $$f_u=\sum_{t\in\bfQ}\left(\sum_{\substack{n\in\bfN\\n\equiv u\pmod{T}}} [c_{q,n}]p^{\frac{n-u}{T}}\right)t^q \in \breve{\bfZ}_p\pparen{t^\bfQ},\ u=0,1,\cdots,T-1.$$
    It suffices to show that $p^{\frac{u}{T}}f_u$ belongs to $\breve{\bfZ}_p\pparen{t^\bfQ}+\calN_T$ for $u=0,1,\cdots,T-1$. This is clear if we write $f_u$ as $f_u=(p^{\frac{u}{T}}\cdot t^0-t^{\frac{u}{T}})f_u+t^{\frac{u}{T}}f_u$, where $t^{\frac{u}{T}}f_u\in \breve{\bfZ}_p\pparen{t^\bfQ}$ and $(p^{\frac{u}{T}}\cdot t^0-t^{\frac{u}{T}})f_u$ is a $T$-null-series.
\end{proof}
\begin{proposition}\label{prop:18592}
    The inclusion $\iota\colon \breve{\bfZ}_p\pparen{t^\bfQ}\hookrightarrow \breve{\bfZ}_{p,T}\pparen{t^\bfQ}$ induces an isomorphism of fields
    $$\sigma\colon\bfL_p\xlongrightarrow{\cong}\breve{\bfZ}_{p,T}\pparen{t^\bfQ}/\calN_T,\ \sum_{q\in\bfQ}[f(q)]p^q\longmapsto \sum_{q\in\bfQ}[f(q)]p^q,$$
    i.e., the following diagram commutes:
    \begin{equation}\label{eq:28959}
        \begin{tikzcd}
            \breve{\bfZ}_p\pparen{t^\bfQ}\ar[r,hook,"\iota"]\ar[d, ->>,"\pi"]&\breve{\bfZ}_{p,T}\pparen{t^\bfQ}\ar[d, ->>,"\pi_T"]\\
            \bfL_p\ar[r,"\cong","\sigma"']&\breve{\bfZ}_{p,T}\pparen{t^\bfQ}/\calN_T
        \end{tikzcd}.
    \end{equation}
\end{proposition}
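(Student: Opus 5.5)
The plan is to obtain $\sigma$ as the map on quotients induced by the ring homomorphism $\pi_T\circ\iota\colon\breve{\bfZ}_p\pparen{t^\bfQ}\to\breve{\bfZ}_{p,T}\pparen{t^\bfQ}/\calN_T$. Its kernel is $\iota^{-1}(\calN_T)=\calN_T\cap\breve{\bfZ}_p\pparen{t^\bfQ}$, and by \Cref{lem:14623} this equals $\calN$. Hence $\pi_T\circ\iota$ factors through $\pi\colon\breve{\bfZ}_p\pparen{t^\bfQ}\to\bfL_p=\breve{\bfZ}_p\pparen{t^\bfQ}/\calN$, giving a unique ring homomorphism $\sigma$ with $\sigma\circ\pi=\pi_T\circ\iota$. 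This is exactly the commutativity of \Cref{eq:28959}, and the induced map is injective.

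Surjectivity comes from \Cref{lem:21624}. Every $f\in\breve{\bfZ}_{p,T}\pparen{t^\bfQ}$ can be written as $f=g+h$ with $g\in\breve{\bfZ}_p\pparen{t^\bfQ}$ and $h\in\calN_T$. Then $\pi_T(f)=\pi_T(\iota(g))=\sigma(\pi(g))$. So $\sigma$ is a bijective ring homomorphism between fields, that is, an isomorphism of fields. Both sides are fields: $\bfL_p$ by \Cref{prop:49318}, and the target by the preceding lemma.

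It remains to check that $\sigma$ has the stated form on standard expansions. Take $x=\sum_{q}[f(q)]p^q\in\bfL_p$. By \Cref{prop:49318}(2) it is the image under $\pi$ of the Teichmüller series $\sum_q[f(q)]t^q\in\breve{\bfZ}_p\pparen{t^\bfQ}$. Its image under $\iota$ is the same series, now regarded in $\breve{\bfZ}_{p,T}\pparen{t^\bfQ}$, and it has Teichmüller coefficients. By the uniqueness statement (2) of the preceding lemma, this series is therefore the canonical representative of its class modulo $\calN_T$. Under the convention of the remark following that lemma, this class is written $\sum_q[f(q)]p^q$, as claimed.

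I expect no serious obstacle, since the real content already sits in \Cref{lem:14623} (the kernel computation, which uses the linear independence from \Cref{lem:61648}) and \Cref{lem:21624}. The only point that needs care is that the Teichmüller representative is preserved under $\iota$. This must be checked against the uniqueness in the preceding lemma rather than assumed, because the identification of classes with formal sums $\sum[g(q)]p^q$ depends on that uniqueness.
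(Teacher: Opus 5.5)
Your proposal is correct and is essentially the paper's proof. The paper invokes the standard fact that $R_1+I=R_2$ implies $R_1/(I\cap R_1)\cong R_2/I$, applied with \Cref{lem:14623} and \Cref{lem:21624}, which is exactly the kernel-and-surjectivity argument you spell out; your extra check that Teichmüller series map to the canonical representatives (which uses that $\breve{\bfZ}_{p,T}$ still has residue field $\overline{\bfF}_p$, so its Teichmüller lift agrees with that of $\breve{\bfZ}_p$) is a routine detail the paper leaves implicit.
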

\begin{proof}
    This is a direct consequence of the following standard fact in commutative algebra: given domains $R_1\subset R_2$ and an ideal $I$ of $R_2$, if $R_1+I=R_2$, then $R_1/(I\cap R_1)\cong R_2/I$, where the isomorphism is induced by the inclusion $R_1\subset R_2$.
\end{proof}

As an application, the $T$-scaled realization in \Cref{prop:18592} gives the desired grouping of terms of a $p$-adic Hahn series by residue modulo $\frac{1}{T}\bfZ$ in the case $T>1$, and provides an explicit lift of $\sigma(f)$ that will serve as the starting point for the proof of the main theorem. We record this lift below.

Let $f=\sum_{q\in\bfQ}[f(q)]p^q\in\bfL_p$ and let $W\subset [0,1)\cap\bfQ$ be a set of representatives of $-T\cdot\opn{Supp}(f)$ modulo $\bfZ$. Intuitively, the field $\breve{\bfZ}_{p,T}\pparen{t^\bfQ}/\calN_T$ provides a suitable setting to write $f$ as $\sum_{q\in W}C_{-\frac{q}{T}} p^{-\frac{q}{T}}$, where $C_{-\frac{q}{T}}\in \breve{\bfZ}_{p,T}$ for every $q\in W$. To be more precise, for every $q\in W$, we set
    $$n_q\coloneqq -\frac{q}{T}-\inf\left(\opn{Supp}(f)\cap\left(-\frac{q}{T}+\frac{1}{T}\bfZ\right)\right)\in \frac{1}{T}\bfZ.$$
    This element is well-defined. Indeed, the set $\opn{Supp}(f)\cap\left(-\frac{q}{T}+\frac{1}{T}\bfZ\right)$ is nonempty by the assumption that $-\frac{1}{T}W$ is a set of representatives of $\opn{Supp}(f)$ modulo $\frac{1}{T}\bfZ$. Moreover, it is a subset of the well-ordered set $\opn{Supp}(f)$, and hence has a minimum element $n_q$. Then the set
    $$\wtilde{W}\coloneqq \left\{-\frac{q}{T}-n_{q}\middle\vert q\in W\right\}\subseteq \opn{Supp}(f)$$
    is a well-ordered set of representatives of $\opn{Supp}(f)$ modulo $\frac{1}{T}\bfZ$, and is in bijection with $W$ via the map
    \begin{equation}\label{eq:mu-bijection}
        \mu_W\colon W\lto \wtilde{W},\ q\longmapsto -\frac{q}{T}-n_q=\inf\left(\opn{Supp}(f)\cap\left(-\frac{q}{T}+\frac{1}{T}\bfZ\right)\right).
    \end{equation}

    For any $w\in \wtilde{W}$, set $C_w\coloneqq\sum_{r\in\bfZ}[f(w+\frac{r}{T})]p^{\frac{r}{T}} \in\breve{\bfQ}_{p,T}$. By the construction of $\wtilde{W}$, we have $C_w\neq 0$ and $f(w+\frac{r}{T})=0$ for every $w\in \wtilde{W}$ and every integer $r<0$. Thus $C_w\in \breve{\bfZ}_{p,T}$, and the element
    \begin{equation}\label{eq:20385}
        \widehat{f}\coloneqq \sum_{w\in\wtilde{W}}C_w\cdot t^w\in \breve{\bfZ}_{p,T}\pparen{t^\bfQ}
    \end{equation}
    is a lift of $\sigma(f)$ in $\breve{\bfZ}_{p,T}\pparen{t^\bfQ}$.

\section{Main theorem}\label{sec:main}

In this section, we prove the main theorem of this paper (cf. \Cref{thm:23791}): a $p$-adic Hahn series whose sign-inverted support, after scaling by some integer $T\geq 1$, admits a sparse set of representatives modulo $\bfZ$ is transcendental over $\breve{\bfQ}_p$. The argument proceeds by contradiction: if such a Hahn series $f$ is a root of a polynomial $P$ over $\breve{\bfQ}_p$, then expanding the lifted element $P(\widehat{f})$ from \Cref{eq:20385} via the multinomial formula would yield a family of vanishing identities on its coefficients (cf. \Cref{lem:Tnull-from-polynomial}). The sparseness condition then forces these identities, when specialized at a carefully chosen exponent, to collapse to a single nonzero term (cf. \Cref{lem:unique-phi-at-q0}), leading to a contradiction.

We isolate the two key inputs for proving \Cref{thm:23791} as separate lemmas as follows.
\begin{lemma}\label{lem:Tnull-from-polynomial}
    Let $f\in\bfL_p$ be a $p$-adic Hahn series and let $T\geq 1$ be an integer. Let $W\subset [0,1)\cap\bfQ$ be a set of representatives of $-T\cdot\opn{Supp}(f)$ modulo $\bfZ$, and let $\wtilde{W}\subseteq\opn{Supp}(f)$ and $\widehat{f}=\sum_{w\in\wtilde{W}}C_w\cdot t^w\in\breve{\bfZ}_{p,T}\pparen{t^\bfQ}$ be as in \Cref{eq:20385}. Suppose that $f$ is a root of a polynomial $P(X)=\sum_{i=0}^n a_iX^i\in\breve{\bfZ}_p[X]$. Then for every $q\in\bfQ$, one has
    \begin{equation}\label{eq:7609}
        \sum_{w\in \bfZ}p^{\frac{w}{T}}\sideset{}{'}\sumbb_{\substack{\wtilde{\phi}\colon\wtilde{W}\to \bfN\\\sumtt^{''}_{s\in\wtilde{W}}\wtilde{\phi}(s)\leq n\\\sumtt^{''}_{s\in\wtilde{W}}\wtilde{\phi}(s)\cdot s=q+\frac{w}{T}}}a_{\sumtt^{''}_{s\in\wtilde{W}}\wtilde{\phi}(s)}\frac{\left(\sumtt^{''}_{s\in\wtilde{W}}\wtilde{\phi}(s)\right)!}{\prodtt^{''}_{s\in\wtilde{W}}\wtilde{\phi}(s)!}\sideset{}{^{''}}\prodtt_{s\in\wtilde{W}}C_s^{\wtilde{\phi}(s)}=0,
    \end{equation}
    where $\sum^{'}$ emphasizes that the summation is taken over a finite index set, and $\sum^{''}$ (resp. $\prod^{''}$) emphasizes that the function inside the summation (resp. product) is finitely supported.
\end{lemma}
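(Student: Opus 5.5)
The plan is to compute $P(\widehat{f})$ inside the ring $\breve{\bfZ}_{p,T}\pparen{t^\bfQ}$, observe that it is a lift of $P(\sigma(f))=\sigma(P(f))=0$, and conclude that it lies in $\calN_T$; the displayed identity \Cref{eq:7609} is then exactly the defining condition of a $T$-null-series, applied to the coefficients of $P(\widehat{f})$.

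\textbf{Step 1: $P(\widehat f)$ is a $T$-null-series.} By \Cref{prop:18592}, $\pi_T\circ\iota=\sigma\circ\pi$ is a ring homomorphism, and $\pi_T(\widehat{f})=\sigma(f)$ by construction of the lift \Cref{eq:20385}. Since the coefficients $a_i\in\breve{\bfZ}_p$ are constants, hence lie in $\breve{\bfZ}_p\subset\breve{\bfZ}_{p,T}$ and are fixed by $\sigma$ (as elements $a_i t^0$), we get
\[
\pi_T(P(\widehat{f}))=\sum_i a_i\,\sigma(f)^i=\sigma(P(f))=0.
\]
Hence $P(\widehat{f})\in\calN_T$, that is, writing $P(\widehat{f})=\sum_{r}c_r t^r$, we have $\sum_{w\in\bfZ}c_{q+w/T}\,p^{w/T}=0$ for every $q\in\bfQ$.

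\textbf{Step 2: the coefficients of $P(\widehat f)$.} I will identify $c_r$ for each $r\in\bfQ$. Expand each power $\widehat{f}^{\,i}=\bigl(\sum_{s\in\wtilde{W}}C_s t^s\bigr)^i$ in the Hahn series ring by the multinomial formula. The coefficient of $t^r$ is the sum, over finitely supported $\wtilde{\phi}\colon\wtilde{W}\to\bfN$ with $\sum\wtilde\phi(s)=i$ and $\sum\wtilde\phi(s)s=r$, of
\[
\frac{i!}{\prod\wtilde\phi(s)!}\prod C_s^{\wtilde\phi(s)}.
\]
Summing over $i\le n$ with weights $a_i$ gives precisely the inner sum in \Cref{eq:7609} with $r=q+\tfrac{w}{T}$.

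\textbf{Step 3: finiteness.} The index set of the inner sum is finite, and the outer sum over $w$ is meaningful. Both follow from the standard finiteness lemma for Hahn products (Neumann's lemma): for a well-ordered set $\wtilde{W}$ and a fixed $i$, each $r$ has only finitely many representations $r=s_1+\cdots+s_i$ with $s_j\in\wtilde{W}$. This is also what makes the product in \Cref{prop:40704} well-defined. For the sum over $w$, I will invoke the $T$-null condition as it is defined: it is a $p$-adically convergent sum in $\breve{\bfZ}_{p,T}$, since all the $r$ occurring lie in the well-ordered set $\sum_{i\le n}i\wtilde{W}$, so only finitely many negative $w$ contribute.

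The main obstacle is Step 2. I must justify that the coefficient of $t^r$ in $\widehat{f}^{\,i}$, defined through iterated Cauchy products, equals the multinomial sum over functions $\wtilde\phi$. I would prove this by induction on $i$, regrouping ordered tuples $(s_1,\dots,s_i)$ by their multiplicity function $\wtilde\phi$. Each multiplicity function is hit by exactly $i!/\prod\wtilde\phi(s)!$ ordered tuples, and the product $\prod C_{s_j}$ depends only on $\wtilde\phi$, since the ring is commutative.
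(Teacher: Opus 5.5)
Your proposal is correct and follows the paper's proof: you show that $P(\widehat f)$ lifts $\sigma(P(f))=0$ and so lies in $\calN_T$, expand each $\widehat f^{\,i}$ by the multinomial formula, and read off \eqref{eq:7609} as the $T$-null condition at $q$. Your extra justifications fill in details the paper leaves implicit: Neumann's lemma for the finiteness of the inner sums, the argument that only finitely many negative $w$ contribute, and the regrouping of ordered tuples by multiplicity function.
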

\begin{proof}
    By multinomial expansion, for any integer $i\geq 0$, one has
    $$\widehat{f}^i=\sum_{q\in\bfQ}\left(\sideset{}{'}\sumbb_{\substack{s_1,\cdots,s_i\in\wtilde{W}\\s_1+\cdots+s_i=q}}\prod_{k=1}^i C_{s_k}\right)t^q=\sum_{q\in\bfQ}\left(\sideset{}{'}\sumbb_{\substack{\wtilde{\phi}\colon\wtilde{W}\to \bfN\\\sumtt^{''}_{s\in\wtilde{W}}\wtilde{\phi}(s)=i\\\sumtt^{''}_{s\in\wtilde{W}}\wtilde{\phi}(s)\cdot s=q}}\frac{i!}{\prodtt^{''}_{s\in\wtilde{W}}\wtilde{\phi}(s)!}\sideset{}{^{''}}\prodtt_{s\in\wtilde{W}}C_s^{\wtilde{\phi}(s)}\right)t^q.$$
    Since $\sigma\left(\sum_{i=0}^n a_i f^i\right)=\sum_{i=0}^n a_i \sigma(f)^i=0$ and $\widehat{f}$ is a lift of $\sigma(f)$, the element
    \begin{align*}
        \sum_{i=0}^n a_i \widehat{f}^i= & \sum_{q\in\bfQ}\left(\sum_{i=0}^n a_i\sideset{}{'}\sumbb_{\substack{\wtilde{\phi}\colon\wtilde{W}\to \bfN \\\sumtt^{''}_{s\in\wtilde{W}}\wtilde{\phi}(s)=i\\\sumtt^{''}_{s\in\wtilde{W}}\wtilde{\phi}(s)\cdot s=q}}\frac{i!}{\prodtt^{''}_{s\in\wtilde{W}}\wtilde{\phi}(s)!}\sideset{}{^{''}}\prodtt_{s\in\wtilde{W}}C_s^{\wtilde{\phi}(s)}\right)t^q\\
        =                               & \sum_{q\in\bfQ}\left(\sideset{}{'}\sumbb_{\substack{\wtilde{\phi}\colon\wtilde{W}\to \bfN                 \\\sumtt^{''}_{s\in\wtilde{W}}\wtilde{\phi}(s)\leq n\\\sumtt^{''}_{s\in\wtilde{W}}\wtilde{\phi}(s)\cdot s=q}}a_{\sumtt^{''}_{s\in\wtilde{W}}\wtilde{\phi}(s)}\frac{\left(\sumtt^{''}_{s\in\wtilde{W}}\wtilde{\phi}(s)\right)!}{\prodtt^{''}_{s\in\wtilde{W}}\wtilde{\phi}(s)!}\sideset{}{^{''}}\prodtt_{s\in\wtilde{W}}C_s^{\wtilde{\phi}(s)}\right)t^q
    \end{align*}
    is a $T$-null-series in $\breve{\bfZ}_{p,T}\pparen{t^\bfQ}$. The equation \Cref{eq:7609} follows from the definition of $T$-null-series.
\end{proof}

\begin{lemma}\label{lem:unique-phi-at-q0}
    Let $f\in\bfL_p$ be a $p$-adic Hahn series. Let $S\subset \bbP$ be a $(c,n)$-sparse subset for some integers $c, n\geq 1$, and let $\uld^{(1)},\cdots,\uld^{(n)}\in S$ satisfy condition (2) of \Cref{def:38144}. Set $\phi_0\colon S\lto\bfN$, $\uld\longmapsto \opn{card}\left(\left\{i\,\middle\vert\,\uld=\uld^{(i)}\right\}\right)$. Suppose that $\norm{S}$ is a set of representatives of $-T\cdot\opn{Supp}(f)$ modulo $\bfZ$, let $$\wtilde{S}\coloneqq\wtilde{\norm{S}}=\left\{-\frac{\normd}{T}-n_q\middle\vert  \uld\in S\right\}$$ and $\mu\coloneqq\mu_{\norm{S}}\circ\norm{\cdot}\colon S\lto\wtilde{S}$ be as in \Cref{eq:mu-bijection}. Set
    $$q_0\coloneqq-\frac{1}{T}\sum_{\uld\in S}\normd\cdot\phi_0(\uld).$$
    Then $\phi_0\circ\mu^{-1}\colon\wtilde{S}\to\bfN$ is the unique function $\wtilde{\phi}$ satisfying
    $$\sideset{}{^{''}}\sumtt_{s\in\wtilde{S}}\wtilde{\phi}(s)\leq n\quad\text{and}\quad \sideset{}{^{''}}\sumtt_{s\in\wtilde{S}}\wtilde{\phi}(s)\cdot s\equiv q_0\pmod{\tfrac{1}{T}\bfZ}.$$
\end{lemma}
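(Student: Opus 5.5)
The plan is to transport the problem along the bijection $\mu\colon S\to\wtilde{S}$ and reduce it to \Cref{lem:39609}. Given $\wtilde{\phi}\colon\wtilde{S}\to\bfN$, finitely supported, I will set $\phi\coloneqq\wtilde{\phi}\circ\mu\colon S\to\bfN$. This correspondence is a bijection between finitely supported functions on $\wtilde{S}$ and on $S$, and it preserves total mass: $\sum''_{s\in\wtilde{S}}\wtilde{\phi}(s)=\sum_{\uld\in S}\phi(\uld)$. So the first condition $\sum''\wtilde{\phi}(s)\leq n$ is equivalent to $\sum_{\uld\in S}\phi(\uld)\leq n$.

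Next I will translate the congruence condition. By \Cref{eq:mu-bijection}, for each $\uld\in S$ one has $\mu(\uld)=-\frac{\normd}{T}-n_{\normd}$ with $n_{\normd}\in\frac1T\bfZ$. Hence
$$\sideset{}{^{''}}\sumtt_{s\in\wtilde{S}}\wtilde{\phi}(s)\cdot s=\sum_{\uld\in S}\phi(\uld)\mu(\uld)\equiv-\frac1T\sum_{\uld\in S}\phi(\uld)\normd\pmod{\tfrac1T\bfZ}.$$
The condition $\sum''\wtilde{\phi}(s)s\equiv q_0\pmod{\frac1T\bfZ}$ then becomes
$$-\frac1T\sum_{\uld\in S}\phi(\uld)\normd\equiv-\frac1T\sum_{\uld\in S}\phi_0(\uld)\normd\pmod{\tfrac1T\bfZ}.$$
Multiplying by $-T$, which carries $\frac1T\bfZ$ onto $\bfZ$, this is equivalent to $\sum_{\uld\in S}\normd\phi(\uld)\equiv\sum_{\uld\in S}\normd\phi_0(\uld)\pmod{\bfZ}$.

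So $\wtilde{\phi}$ satisfies both conditions if and only if $\phi=\wtilde{\phi}\circ\mu$ satisfies the hypotheses of \Cref{lem:39609}. That lemma gives $\phi=\phi_0$, i.e.\ $\wtilde{\phi}=\phi_0\circ\mu^{-1}$. For existence, $\phi_0$ itself satisfies the hypotheses: $\sum\phi_0=n$ and the congruence holds trivially. Hence $\phi_0\circ\mu^{-1}$ satisfies the two conditions, and it is the unique such function.

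No step is a real obstacle; the only care needed is bookkeeping.
\begin{enumerate}
\item $\mu$ is indeed a bijection $S\to\wtilde{S}$. This uses injectivity of $\norm{\cdot}$ on $\bbP$ from \Cref{it:28363} together with the bijectivity of $\mu_{\norm{S}}$.
\item The finitely supported sums transport correctly under this reindexing.
\item $n_q$ lies in $\frac1T\bfZ$, so it vanishes modulo $\frac1T\bfZ$.
\end{enumerate}
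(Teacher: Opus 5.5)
Your proposal is correct and is essentially the paper's argument: pull $\wtilde{\phi}$ back along the bijection $\mu$, observe that the correction terms $n_{\normd}\in\frac{1}{T}\bfZ$ vanish modulo $\frac{1}{T}\bfZ$, multiply by $-T$ to get a congruence modulo $\bfZ$, and invoke \Cref{lem:39609}. Your explicit check that $\phi_0\circ\mu^{-1}$ itself satisfies both conditions (the existence half of ``unique function'') is a small point that the paper's proof leaves implicit.
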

\begin{proof}
    Take $\wtilde{\phi}\colon\wtilde{S}\to \bfN$ that satisfies $\sumtt^{''}_{s\in\wtilde{S}}\wtilde{\phi}(s)\leq n$ and $\sumtt^{''}_{s\in\wtilde{S}}\wtilde{\phi}(s)\cdot s\equiv q_0\pmod{\frac{1}{T}\bfZ}$, and set $\phi\coloneqq \wtilde{\phi}\circ \mu$.
    Then
    $$\sideset{}{^{''}}\sumtt_{\underline{d}\in S}\phi(\uld)=\sideset{}{^{''}}\sumtt_{s\in\wtilde{S}}\wtilde{\phi}(s)\leq n$$
    and
    \begin{align*}
        \sideset{}{^{''}}\sumtt_{\uld\in S}\phi(\uld)\cdot \normd= & -T\sideset{}{^{''}}\sumtt_{s\in\wtilde{S}}\wtilde{\phi}(s)\cdot s -\sideset{}{^{''}}\sumtt_{\uld\in S}\wtilde{\phi}\left(\mu(\uld)\right)\left(T\cdot n_{\normd}\right) \\
        \equiv                                                   & -T\sideset{}{^{''}}\sumtt_{s\in\wtilde{S}}\wtilde{\phi}(s)\cdot s\\
        \equiv&-T\cdot q_0= \sum_{\uld\in S}\normd\cdot\phi_0(\uld)\pmod{\bfZ}.
    \end{align*}
    By \Cref{lem:39609}, one knows that $\phi=\phi_0$, i.e. $\wtilde{\phi}=\phi_0\circ \mu^{-1}$.
\end{proof}
The main theorem now follows from the above two lemmas and the construction of $\widehat{f}$ in \Cref{eq:20385}.
\begin{theorem}\label{thm:23791}
    Let $f\in\bfL_p$ be a $p$-adic Hahn series such that there exists an integer $T\geq 1$ for which $-T\cdot\opn{Supp}(f)$ admits a sparse set $W\neq \{0\}$ of representatives modulo $\bfZ$. Then $f$ is transcendental over $\breve{\bfQ}_p$, and hence over $\bfQ_p$.
\end{theorem}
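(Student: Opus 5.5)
We argue by contradiction. Suppose $f$ is algebraic over $\breve{\bfQ}_p$. After clearing denominators we get a nonzero polynomial $P(X)=\sum_{i=0}^N a_iX^i\in\breve{\bfZ}_p[X]$ with $P(f)=0$; taking $P$ of minimal degree we may assume $a_0\neq 0$ whenever $f\neq 0$. (The case $f=0$ is excluded: then $\opn{Supp}(f)=\emptyset$ and $W=\emptyset$, which is not sparse, since condition (2) needs elements of $\opn{Dom}_p(W)$.)

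By \Cref{lem:54409}, $W=\norm{S}$ for some $S\subset\bbP$ and some $c\geq 1$ such that $S$ is $(c,n)$-sparse for infinitely many $n$. Since $W\neq\{0\}$, we may take $c\geq 1$ with elements of digit sum exactly $c$. We choose such an $n$ with $n>N$, together with $\uld^{(1)},\dots,\uld^{(n)}$ as in \Cref{def:38144}. Let $\phi_0$ and $q_0$ be as in \Cref{lem:unique-phi-at-q0}. We do not want to apply the lemmas with the degree bound $N$ but with bound $n$. So we multiply: $X^{n-N}P(X)$ is again a polynomial killing $f$, now of degree exactly $n$, with coefficients $a'_i=a_{i-(n-N)}$. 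Its top coefficient is $a'_n=a_N\neq 0$.

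Apply \Cref{lem:Tnull-from-polynomial} to this polynomial, with $\wtilde{W}=\wtilde{S}$, at $q=q_0$. The sum over $w\in\bfZ$ combined with the constraint $\sum\wtilde\phi(s)s=q_0+w/T$ ranges exactly over the $\wtilde\phi$ with $\sum\wtilde\phi\leq n$ and $\sum\wtilde\phi(s)s\equiv q_0\pmod{\frac1T\bfZ}$. By \Cref{lem:unique-phi-at-q0} there is only one such function, namely $\wtilde\phi_0=\phi_0\circ\mu^{-1}$, with $\sum\wtilde\phi_0=n$. Equation \Cref{eq:7609} then collapses to the single term
\[
p^{\frac{w_0}{T}}\,a'_n\,\frac{n!}{\prod_{s}\wtilde\phi_0(s)!}\prod_{s\in\wtilde S}C_s^{\wtilde\phi_0(s)}=0 .
\]
Each factor is nonzero in the domain $\breve{\bfZ}_{p,T}$, which has characteristic $0$: $a'_n=a_N\neq0$, the multinomial coefficient is a positive integer, and $C_s\neq0$ by the construction of $\wtilde W$ preceding \Cref{eq:20385}. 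This is a contradiction, so $f$ is transcendental over $\breve{\bfQ}_p$, and hence over $\bfQ_p\subset\breve{\bfQ}_p$.

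The main obstacle is the bookkeeping in matching the hypotheses of the two lemmas. One point is that \Cref{lem:unique-phi-at-q0} requires $\norm{S}$ to be exactly the representative set, so that $\mu$ is a bijection $S\to\wtilde S$. The other is the degree alignment: since the sparseness index $n$ must equal the degree bound, we need the padding by $X^{n-N}$. This padding is harmless only because the unique surviving $\phi$ has total mass exactly $n$, so it picks up the nonzero leading coefficient rather than a vanishing padded coefficient. The remaining parts, namely nonvanishing of the factors and existence of $n\geq N$ from "infinitely many $n$", are routine.
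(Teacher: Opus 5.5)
Your proof is correct and follows the paper's argument essentially step for step. You pad $P$ by a power of $X$ so that its degree equals a sparseness index $n$, apply \Cref{lem:Tnull-from-polynomial} at $q=q_0$, and use \Cref{lem:unique-phi-at-q0} to collapse the identity to a single nonzero term carrying the leading coefficient. Your extra remarks spell out points the paper leaves implicit: excluding $f=0$, and noting that the surviving $\wtilde{\phi}_0$ has total mass exactly $n$, so the padding picks up the true leading coefficient rather than a zero one.
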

\begin{proof}
    Suppose for contradiction that $f$ is a root of a polynomial $P(X)=a_nX^n+a_{n-1}X^{n-1}+\cdots+a_0\in \breve{\bfZ}_p[X]$ with $a_n\neq 0$. By multiplying $P(X)$ by a suitable power of $X$, we may assume that there exists $S\subset\bbP$ such that $\norm{S}\coloneqq\{\normd\in\bfQ\vert \uld\in S\}$ is a set of representatives of $-T\cdot\opn{Supp}(f)$ modulo $\bfZ$, and $S$ is $(c,n)$-sparse for some integer $c\geq 1$ (cf. \Cref{lem:54409}), where $n$ equals the degree of $P(X)$. Let $\uld^{(1)},\cdots,\uld^{(n)}\in S$ be elements satisfying condition (2) of \Cref{def:38144} for $n$ and $c$, set
    $$\phi_0\colon S\lto\bfN,\ \uld\longmapsto \opn{card}\left(\left\{i\middle\vert 1\leq i\leq n,\ \uld=\uld^{(i)}\right\}\right),$$
    and let $\wtilde{S}\coloneqq\wtilde{\norm{S}}$ and $\mu\colon S\to\wtilde{S}$ be as in \Cref{eq:mu-bijection} (with $W=\norm{S}$).

    By \Cref{lem:Tnull-from-polynomial} applied to $W=\norm{S}$, for every $q\in\bfQ$ one has the identity \Cref{eq:7609}. Specializing to
    $$q=q_0\coloneqq-\frac{1}{T}\sum_{\uld\in S}\normd\cdot\phi_0(\uld),$$
    \Cref{lem:unique-phi-at-q0} tells us that the only $\wtilde{\phi}\colon\wtilde{S}\to\bfN$ contributing to the interior summation of \Cref{eq:7609} at $q=q_0$ is $\wtilde{\phi}=\phi_0\circ\mu^{-1}$. Hence \Cref{eq:7609} at $q=q_0$ reduces to the single term
    $$\frac{p^{\frac{w}{T}}\cdot n!}{\prodtt^{''}_{\uld\in S}\phi_0(\uld)!}\cdot a_n\cdot\sideset{}{^{''}}\prodtt_{\uld\in S}C_{\mu(\uld)}^{\phi_0(\uld)}=0,$$
    where $w=T\left(\sumtt^{''}_{s\in\wtilde{S}}\phi_0\left(\mu^{-1}(s)\right)\cdot s-q_0\right)\in\bfZ$. This forces $a_n=0$ or $C_{\mu(\uld)}=0$ for some $\uld\in S$, contradicting $a_n\neq 0$ and $C_s\neq 0$ for every $s\in \wtilde{S}$.
\end{proof}
\section{Application: $p$-adic Hahn series with bounded support}\label{sec:55224}
The goal of this section is to prove \Cref{thm:16129} (restated as \Cref{coro:11594} below): a $p$-adic Hahn series that is algebraic over $\bfQ_p$ and whose support is bounded with only finitely many accumulation points must have finite support. The argument proceeds in three steps. First, building on results of Kedlaya, we show that $\bfQ_p$-algebraicity forces the coefficient function to be \emph{quasi-twist-recurrent} (QTR), a combinatorial recurrence condition (\Cref{prop:167}). Second, we analyze the structure of bounded QTR sets and prove that, when the number of accumulation points is finite, such a set decomposes into a finite set together with finitely many pairwise disjoint \emph{rays} (\Cref{coro:48108}). Finally, we show that this ray decomposition yields a sparse set of representatives for the sign-inverted and suitably scaled support (\Cref{lem:57121,lem:42556}), so that our main transcendence theorem (\Cref{thm:23791}) forces the series to be transcendental unless its support is finite.

In this section, we will write
$$0.q_1\cdots q_n\cdots\coloneqq \sum_{i=1}^\infty q_i\cdot p^{-i},\ q_i\in\{0,1,\cdots,p-1\}$$ 
to represent the base $p$ expansion of any rational number in $[0,1)$. We will require that $q_j\neq p-1$ for infinitely many $j$. In fact, every such expansion occurring in this section has finite length, that is, $q_i=0$ for $i\gg 1$. 

In addition, for any Hahn series $f=\sum_{q\in\bfQ}f(q)t^q\in\bfL_p^\flat$ (resp. $f=\sum_{q\in\bfQ}[f(q)]p^q\in\bfL_p$), we will write $F_f$ to denote the coefficient function $q\longmapsto f(q)$ from $\bfQ$ to $\overline{\bfF}_p$.

\subsection{Quasi-twist-recurrent functions}\label{sec:5330}
In \cite{kedlayaAlgebraicClosurePower2001}, Kedlaya characterizes the algebraic closure of $K\pparen{t}$ in $K\pparen{t^\bfQ}$ for an arbitrary algebraically closed field $K$ of characteristic $p>0$ in terms of twist-recurrent sequences. In particular, when $K=\overline{\bfF}_p$, this characterization admits a simpler description\footnote{See also \cite[Remark 2.7, Remark 2.9]{kedlayaAlgebraicityGeneralizedPower2017b} for the critical remarks on \cite[Theorem 15]{kedlayaAlgebraicClosurePower2001}.}:
\begin{theorem}[{cf. \cite[Theorem 15]{kedlayaAlgebraicClosurePower2001}\cite[Theorem 11.11]{kedlayaAlgebraicityGeneralizedPower2017b}}]\label{thm:28713}
    For $a\in\bfZ_{>0}$, $b\in\bfZ$ and $c\in\bfZ_{\geq 0}$, we define
    $$S_{a,b,c}=\left\{\frac{1}{a}\left(n-\sum_{k=1}^\infty b_kp^{-k}\right)\middle\vert n\geq -b,\ b_k\in \bfN_{<p},\ \sum b_k\leq c\right\}\subset \frac{1}{a}\bfZ[1/p]$$
    and
    $$T_c\coloneqq S_{1,0,c}\cap (-1,0)=\left\{-\sum_{k=1}^\infty b_kp^{-k}\middle\vert b_k\in \bfN_{<p},\ \sum b_k\leq c\right\}\subset\bfZ[1/p].$$
    A Hahn series $x=\sum_q x_q t^q\in\overline{\bfF}_p((t^\bfQ))$ is integral over $\overline{\bfF}_p((t))$ if and only if the following conditions hold:
    \begin{enumerate}
        \item\label{it:23822} There exist $a,b,c$ such that $\opn{Supp}(x)\subseteq S_{a,b,c}$.
        \item For some (hence any) $a,b,c$ as in \Cref{it:23822}, there exist integers $M,N$ with the following property: for each integer $m\geq -b$, the function $f_m\colon T_c\lto \overline{\bfF}_p$ given by $f_m(z)=x_{(m+z)/a}$ has the property that every sequence of the following form becomes periodic of period $N$ after at most $M$ terms:
              $$c_n=f_m\left(-\sum_{k=1}^{j-1}b_k p^{-k}-p^{-n}\left(\sum_{k=j}^\infty b_kp^{-k}\right)\right),\ n=0,1,\cdots,$$
              where $j\in\bfZ_{\geq 1}$ and $b_k\in\bfN_{<p}$ with $\sum b_k\leq c$ are arbitrary.
    \end{enumerate}
\end{theorem}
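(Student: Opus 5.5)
This result is quoted from Kedlaya, and the paper uses it as a black box. If I had to prove it, I would follow Kedlaya's strategy and reduce both directions to \emph{additive} (i.e.\ $p$-linearized) polynomial equations. The key algebraic input is Ore's lemma: if $x\in\overline{\bfF}_p\pparen{t^\bfQ}$ is algebraic over $K\coloneqq\overline{\bfF}_p\pparen{t}$, then the $K$-span of $x,x^p,x^{p^2},\dots$ is finite dimensional. Hence $x$ satisfies an equation
$$\sum_{i=0}^{m}c_i x^{p^i}=c,\qquad c_i,c\in K.$$
Conversely, any root of such an equation is algebraic. This turns integrality of $x$ into a question about roots of additive polynomials. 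Integrality over $\overline{\bfF}_p\pparen{t}$ versus algebraicity only shifts the support, and that shift is absorbed by the parameter $b$.

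\textbf{Necessity.} First I would establish condition (1). After rescaling $t\mapsto t^{1/a}$ (with $p\nmid a$, using tame extensions), I may assume the Newton polygon of the additive equation has slopes in $\bfZ$. Iterating the Frobenius-type substitutions that solve $x^p - x = y$ then shows that $x$ is built from $K$ by finitely many Artin--Schreier steps. Each step introduces exponents of the form $-\sum_k b_k p^{-k}$, with the digit budget $c$ bounded by the number of steps and the degree. Second, I would establish condition (2). The solution of $x^p-x=t^{-r}$ is $\sum_{k\ge1}t^{-r/p^k}$, and more general solutions are $\overline{\bfF}_p$-linear combinations of such series twisted by Frobenius. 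Along each family of exponents $-\sum_{k<j}b_kp^{-k}-p^{-n}(\cdots)$, the coefficient is therefore governed by a linear recurrence over $\overline{\bfF}_p$ twisted by Frobenius. Its values lie in a finite field $\bfF_{p^e}$, which yields eventual periodicity with period and preperiod $N$, $M$ uniform in $m$ and in the choice of digits.

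\textbf{Sufficiency.} I would show that every $x$ satisfying (1) and (2) has finitely many distinct \emph{twisted restrictions}. That is, the functions obtained by truncating digits and applying the shift $n\mapsto n+1$ in the recurrence form a finite set, a finite automaton in the sense of Christol. I would then produce explicit $c_i$ such that $\sum c_i x^{p^i}\in K$, by taking $\overline{\bfF}_p$-linear relations among the finitely many states; periodicity with period $N$ gives a Frobenius relation $\phi^{N}$ on the states.

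I expect the main obstacle to be the uniformity in necessity. One must show that $M$ and $N$ can be chosen independently of $m\ge -b$ and of the infinitely many digit choices. I would handle this by an induction on the digit budget $c$. At each level, the coefficient sequence is controlled by a fixed finite-dimensional $\bfF_{p^e}$-linear system coming from the additive equation, and the bounds $M,N$ depend only on the dimension of that system and on $e$.
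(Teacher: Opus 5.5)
The paper gives no proof of this statement. It is quoted from Kedlaya and used as a black box, and it is one of the two admitted results in the Lean formalization. So your sketch has to stand on its own, and it has a genuine gap exactly where you suspect: the uniformity of $M,N$ in the necessity of (2).

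\emph{The finite-field claim is false.} Uniformity would need the coefficient values along the sequences $c_n$ to lie in one finite field $\bfF_{p^e}$, with $e$ independent of $m$ and of the digits, and they need not. If $w=\sum_{k\geq1}t^{-p^{-k}}$ (so $w^p-w=t^{-1}$) and $g=\sum_{m\geq0}\alpha_mt^m\in\overline{\bfF}_p((t))$ is arbitrary, then $x=gw$ is integral and has coefficient $\alpha_m$ at every exponent $m-p^{-k}$. So the values along the sequences $c_n$ can exhaust $\overline{\bfF}_p$.

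\emph{A twisted recurrence alone gives no uniform bound.} The root $\sum_{k\geq1}\beta^{p^{-k}}t^{-p^{-k}}$ of $w^p-w=\beta t^{-1}$ yields $c_{n+1}=c_n^{1/p}$, whose period is $[\bfF_p(\beta):\bfF_p]$, unbounded as the seed varies. Uniformity for a single algebraic $x$ comes from the twisted data being defined over a finite field while the arbitrary constants enter only linearly; your argument never establishes this. Moreover, comparing coefficients in $\sum_ic_ix^{p^i}=c$ relates $x$ at the points $(q-j)/p^i$. That changes both the level $m$ and the digit string, so you do not even get a recurrence along a single sequence $c_n$.

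\emph{One way to supply the missing idea is a descent.}
\begin{enumerate}
\item Reduce to $x$ separable over $K=\overline{\bfF}_p((t))$ by replacing $x$ with $x^{p^r}$.
\item By Krasner's lemma, replace the minimal polynomial of $x$ by one with coefficients in $\bfF_q[t,t^{-1}]$ that has a root $x'$ with $K(x')=K(x)$.
\item Then $x=\sum_ig_i\eta_i$ with $g_i\in K$ and $\eta_i=x'^i$ algebraic over $\bfF_q(t)$; a Frobenius-orbit argument shows the $\eta_i$ have coefficients in a finite field.
\item Each sequence $c_n$ of $x$ at level $m$ is then a finite $\overline{\bfF}_p$-linear combination of the sequences of the $\eta_i$ at levels $m-aj$. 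Only finitely many $j$ contribute, since supports are bounded below.
\end{enumerate}
Only for the $\eta_i$ does a finite-state argument of the kind you envisage give uniform $M,N$. That finite-field case is itself a substantial theorem (Kedlaya's generalization of Christol's theorem).

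\emph{Sufficiency has the mirror-image problem.} Over $\overline{\bfF}_p$ the twisted restrictions need not form a finite set: take $x=\sum_{m\geq0}\alpha_mt^m$ with pairwise distinct $\alpha_m$. So there is no finite automaton from which to read off relations. What (2) with uniform $M,N$ does give, applied one gap at a time, is the following. The coefficient at the point with level $m$, word $\mathbf d=(d_1,\dots,d_s)$ and gap vector $g$ depends only on $m$, $\mathbf d$ and the class of $g$ under the map sending $g_u\mapsto g_u$ if $g_u<M$ and $g_u\mapsto M+((g_u-M)\bmod N)$ if $g_u\geq M$.

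Hence $x=\sum_jG_jH_j$ is a finite sum, with $G_j\in\overline{\bfF}_p((t^{1/a}))$ and $H_j$ the sum of $t^{-a^{-1}\sum_id_ip^{-(g_1+\cdots+g_i+i)}}$ over one class. Peeling off the first digit gives two cases:
\begin{itemize}
\item $H=(t^{-d_1/a}H')^{p^{-(r+1)}}$ when the first gap is fixed at $r$;
\item $H-H^{p^{-N}}=(t^{-d_1/a}H')^{p^{-(M+r+1)}}$ when the first gap runs over $M+r+N\bfN$.
\end{itemize}
So each $H_j$ is algebraic by induction on $s$, which repairs sufficiency in an elementary way.

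\emph{Two smaller points.} First, integrality over the field $\overline{\bfF}_p((t))$ is the same as algebraicity, so there is no support shift to absorb. Second, for (1), the fact that $x$ lies in a finite Artin--Schreier tower over the perfect closure of some $\overline{\bfF}_p((t^{1/a}))$ comes from wild inertia being pro-$p$, not from Newton polygons. To propagate the digit-sum bound through the tower, you must write elements in the monomial basis $\prod_ix_i^{e_i}$ with $0\leq e_i<p$, since naive inversion of series does not preserve digit-sum bounds.
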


We rephrase this result by introducing the notion of quasi-twist-recurrent functions:
\begin{definition}\label{def:16557}
    We say a function $x\colon\bfQ\lto\overline{\bfF}_p$ is \textbf{quasi-twist-recurrent} (QTR) with respect to the data $(a,b,c,M,N)\in \bfZ_{>0}\times \bfN\times \bfN\times \bfZ_{>0}\times \bfZ_{>0}$, if its support $\opn{Supp}(x)$ is a well-ordered subset of $\bfQ$ such that %
    \begin{enumerate}
        \item for any $q\in\opn{Supp}(x)$, if we expand $aq$ in base $p$ as
              $$aq=w-0.q_1\cdots q_n\cdots,\ w\in\bfZ,\ q_n\in\{0,\cdots,p-1\}$$
              with $\sum q_n<\infty$, then $w\geq -b$ and $\sum q_n\leq c$;
        \item for any integer $w\geq -b$ and any rational number $q$ of the form
              $$q=\frac{1}{a}\left(w-0.q_1\cdots q_n\cdots\right),\ q_n\in\bfN_{<p},\ \sum q_n\leq c,$$
              if there exist $M$ consecutive $0$ in the sequence $q_1,q_2,\cdots$, i.e. $q_{k+1}=q_{k+2}=\cdots=q_{k+M}=0$ for some integer $k\geq 0$, then the coefficient of $x$ at $q$ equals the coefficient of $x$ at the following rational number:
              $$\frac{1}{a}\left(w-0.q_1\cdots q_k\overbrace{0\cdots0}^{M+N}q_{k+M+1}\cdots\right).$$
    \end{enumerate}
    We omit the data $(a,b,c,M,N)$ when it is clear from the context, and we say a subset of $\bfQ$ is QTR if it is the support of a QTR function.
\end{definition}
\Cref{thm:28713} can be restated as follows:
\begin{proposition}\label{prop:54845}
    A Hahn series $x=\sum_q x_q t^q\in\overline{\bfF}_p((t^\bfQ))$ is algebraic over $\overline{\bfF}_p((t))$ if and only if the function $F_x$ is QTR.
\end{proposition}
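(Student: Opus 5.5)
Since $\overline{\bfF}_p((t))$ is a field, an element of $\overline{\bfF}_p((t^\bfQ))$ is integral over it if and only if it is algebraic over it. So the proposition amounts to showing that the two conditions of \Cref{thm:28713} are, after unwinding, exactly the two conditions of \Cref{def:16557}. We also use that $\opn{Supp}(F_x)=\opn{Supp}(x)$ is well-ordered for every Hahn series $x$. The plan is to check both implications by matching data $(a,b,c)$ and $(M,N)$ directly.

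\emph{Condition (1).} Let $q\in\bfQ$ with $aq=w-0.q_1q_2\cdots$, where $w\in\bfZ$ and $\sum q_n<\infty$. A finite digit sum forces a terminating expansion, which is then unique. Hence $q\in S_{a,b,c}$ if and only if $w\geq -b$ and $\sum q_n\leq c$. So $\opn{Supp}(x)\subseteq S_{a,b,c}$ is exactly QTR condition (1).

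The theorem allows $b\in\bfZ$, while \Cref{def:16557} requires $b\in\bfN$. Replacing $b$ by $\max(b,0)$ only enlarges $S_{a,b,c}$. By the ``some (hence any)'' clause of \Cref{thm:28713}, condition (2) still holds for the enlarged data with suitable $M,N$. So we may assume $b\geq 0$ throughout.

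\emph{Condition (2).} Fix $m\geq -b$, $j\geq 1$ and digits $(b_k)$ with $\sum b_k\leq c$. The $n$-th argument $-\sum_{k<j}b_kp^{-k}-p^{-n}\sum_{k\geq j}b_kp^{-k}$ is the digit string $b_1\cdots b_{j-1}$, then $n$ zeros, then $b_j b_{j+1}\cdots$. Thus $c_n$ is the coefficient of $x$ at $\frac1a(m-0.b_1\cdots b_{j-1}0^{n}b_jb_{j+1}\cdots)$. Eventual periodicity ``period $N$ after at most $M$ terms'' means $c_n=c_{n+N}$ for all $n\geq M$.

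\begin{enumerate}
\item \emph{Theorem $\Rightarrow$ QTR.} Let $q=\frac1a(w-0.q_1q_2\cdots)$ with $q_{k+1}=\cdots=q_{k+M}=0$. Take $m=w$, $j=k+1$, $(b_1,\dots,b_k)=(q_1,\dots,q_k)$ and $(b_{k+1},b_{k+2},\dots)=(q_{k+M+1},q_{k+M+2},\dots)$. Then $q$ corresponds to $n=M$, and the number in QTR condition (2) corresponds to $n=M+N$. So $c_M=c_{M+N}$ gives QTR condition (2).
\item \emph{QTR $\Rightarrow$ theorem.} For $n\geq M$, the string for $c_n$ has at least $M$ consecutive zeros in positions $j,\dots,j+M-1$. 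Applying QTR condition (2) with $k=j-1$ inserts $N$ further zeros, giving $c_n=c_{n+N}$.
\end{enumerate}

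The edge case $z=0$ gives a constant sequence, so it needs no argument. Points lying outside $S_{a,b,c}$ have zero coefficient on both sides, by condition (1).

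\emph{Main obstacle.} The only real subtlety is bookkeeping. First, one must apply QTR condition (2) to points with arbitrary leading zeros in the tail. Second, the ranges of $b$ and the role of $T_c\subset(-1,0)$ must match, with the integer part absorbed into $m$. Third, the uniqueness of terminating base-$p$ expansions must be used so that the digit strings on the two sides are literally the same. I would verify these index shifts carefully, since the rest is a direct translation.
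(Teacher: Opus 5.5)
Your proposal is correct and takes the same route as the paper: the paper gives no separate argument and presents the proposition as a direct restatement of Theorem~\ref{thm:28713}, using that integral and algebraic coincide over a field. Your explicit bookkeeping fills in what the paper leaves implicit: matching $q$ with the index $n=M$ and the zero-inserted point with $n=M+N$, taking $k=j-1$ for the converse, replacing $b$ by $\max(b,0)$, and treating the degenerate case $z=0$ separately.
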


On the other hand, Kedlaya proved the following result, which links the algebraicity of $p$-adic Hahn series to that of equal-characteristic Hahn series:
\begin{theorem}[{cf. \cite[Theorem 13.4]{kedlayaAlgebraicityGeneralizedPower2017b}}]\label{thm:47}
    Let $L$ be the completed integral closure of $\overline{\bfF}_p((t))$ in $\overline{\bfF}_p((t^\bfQ))$. Then the completed integral closure of $\breve{\bfQ}_p$ in $\bfL_p$ is the completion of the following set:
    $$\left\{\sum_q[x_q]p^q\in\bfL_p\middle\vert\sum_q x_q t^q\in L\right\}.$$
\end{theorem}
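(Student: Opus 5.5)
The plan is to deduce the statement from tilting (Fontaine–Wintenberger) together with a digit-by-digit comparison between Witt-vector arithmetic and the map $\Theta$. Write $C$ for the completed integral closure of $\breve{\bfQ}_p$ in $\bfL_p$. This is the completion of an algebraic closure of $\breve{\bfQ}_p$, hence a perfectoid field. Fix in $\bfL_p$ the compatible system of roots $p^{1/p^k}$ given by the Hahn monomials. This defines an element $p^\flat$ of the tilt $C^\flat$, and I would identify $C^\flat$ with $L$ via $t\mapsto p^\flat$. The reason this works is that $\overline{\bfF}_p\pparen{t}^{\opn{alg},\wedge}$ is the tilt of $\widehat{\breve{\bfQ}_p^{\opn{alg}}}$, and both fields embed compatibly into the spherical completions $\bfL_p^\flat$ and $\bfL_p$.

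The first key step is a mod $p$ comparison. Reduction modulo $p$ identifies $\calO_{\bfL_p}/p$ with $\calO_{\bfL_p^\flat}/t$ through $\sum[x_q]p^q\mapsto\sum x_qt^q$ restricted to exponents $q<1$. This identification is additive, since Teichmüller sums only produce carries of valuation $\geq 1$. It follows that for every $x\in\calO_L$ one has $\Theta(x)\equiv\theta([x])\pmod{p}$, where $\theta\colon W(\calO_{C^\flat})\to\calO_C$ is Fontaine's map. Moreover, every class in $\calO_C/p$ is of the form $\theta([x])\bmod p$ with $x\in\calO_L$, because $\calO_{C^\flat}/p^\flat\cong\calO_C/p$.

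With this in hand, I would prove the two inclusions by successive approximation.

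\emph{The set $\Theta(L)$ lies in $C$.} Given $x\in\calO_L$, write $\Theta(x)=\theta([x])+p\,y_1$. The remainder $y_1$ is obtained from Witt-vector carry polynomials applied to the digits $x_q$. I claim that its Teichmüller digits are again of the form $\Theta(x_1)$ with $x_1\in\calO_L$, because $L$ is perfect and closed under these polynomial operations. Iterating gives $\Theta(x)=\sum_n p^n\theta([x_n])$, which converges in $C$.

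\emph{The field $C$ lies in the closure of $\Theta(L)$.} Given $c\in\calO_C$, choose $x_0\in\calO_L$ with $c\equiv\Theta(x_0)\pmod p$. Then apply the same procedure to $(c-\Theta(x_0))/p$, and so on. It then remains to show that partial sums $\Theta(x_0)+p\Theta(x_1)+\cdots+p^m\Theta(x_m)$ are limits of single elements $\Theta(z)$ with $z\in L$.

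The main obstacle is exactly this carry step in both directions. One must show that normalizing a finite sum $\sum_n p^n\Theta(x_n)$ into a standard expansion $\sum[z_q]p^q$ produces a coefficient function $z$ with $\sum z_qt^q\in L$. My approach would be to express the digits $z_q$ through the Witt addition polynomials $S_k$, taking $p^{k}$-th roots, in terms of finitely many shifted copies $t^{n}x_n$. Since $L$ is a perfect complete field containing each $t^nx_n$, and the relevant expressions converge $t$-adically, the resulting series lies in $L$. This digit analysis with controlled convergence is where the real work lies. It is also the reason the paper uses the result as a black box.
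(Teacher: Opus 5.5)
The paper does not prove this statement. It imports it from Kedlaya and uses it as a black box; it is one of the two admitted results in the formalization. Judged on its own, your preliminaries are sound: the identification $C^\flat=L$ inside $\bfL_p^\flat$, the ring isomorphism $\calO_{\bfL_p^\flat}/t\cong\calO_{\bfL_p}/p$ induced by $\Theta$, and $\Theta(x)\equiv\theta([x])\pmod p$. But there is a genuine gap at the carry step, and the justification you offer for it does not hold. Note first that $\Theta\colon\bfL_p^\flat\to\bfL_p$ is an isometric bijection. Indeed, if $x,y$ first differ at $q_0$, then $[x_{q_0}]-[y_{q_0}]$ is a unit, so $v(\Theta(x)-\Theta(y))=q_0$. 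Hence $\Theta(L)$ is already closed, and the statement is exactly $\Theta(L)=C$. In particular it forces $\Theta(L)$ to be closed under addition. Both of your inclusions therefore reduce to showing that the digit functions produced by carrying, in $\sum_n p^n\Theta(x_n)$ or in $\Theta(x)-\theta([x])$, lie in $L$. You propose to get this from the fact that $L$ is a perfect complete field containing each $t^nx_n$.

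That does not follow, because carries are not field operations. For $p=2$ one has $[a]+[b]=[a+b]+2[(ab)^{1/2}]+4(\cdots)$ in $W(\overline{\bfF}_2)$. So for $x,y\in\calO_{\bfL_p^\flat}$ supported in $[0,1)$, the standard expansion of $\Theta(x)+\Theta(y)$ has, on $[1,2)$, exactly the digits of $\sum_q(x_qy_q)^{1/2}t^{q+1}$. This is a coefficientwise (Hadamard-type) product of $x$ and $y$. Likewise, for $x=at^q+bt^r$ with $0\le q<r$ one gets $\Theta(x)-\theta([x])\equiv 2\,[(ab)^{1/2}]\,2^{(q+r)/2}\pmod 4$. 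For general $x$, the first carry is an elementary-symmetric expression in the infinitely many monomials $x_qt^q$.

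For infinite supports, these expressions are not obtained from $x$, $y$ or the $t^nx_n$ by ring operations, by Frobenius or its inverse, or by $t$-adic limits. When the support has accumulation points they are transfinite Hahn sums whose terms do not tend to $0$. For example, $x=\sum_{k\ge1}t^{1-p^{-k}}$ lies in $L$, being $t$ times a root of $X^p-X-t^{-1}$, but its pairwise products all have valuation in $[2-2/p,2)$. So two steps remain unproved: that the Teichmüller digits of $y_1$ are of the form $\Theta(x_1)$ with $x_1\in\calO_L$, and the final carry normalization in the other direction.

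What is missing is a closure property of $L$ under such digitwise operations (coefficientwise products, symmetric functions of monomials, shifts). This is a combinatorial fact about coefficient functions, not a formal property of a complete, perfect, algebraically closed field. Compare Hadamard-closure of algebraic power series: over finite fields it comes from Christol's theorem, and in characteristic $0$ it fails. This is the kind of input supplied by Kedlaya's digit-level (automata/twist-recurrence) description of $\overline{\bfF}_p((t))^{\opn{alg}}$ recalled in \Cref{thm:28713}, once extended to the completion. The tilting framework alone does not provide it.
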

\begin{remark}
    Since the field $\bfC_p$ is complete and algebraically closed, it identifies with the $p$-adic completion of the algebraic (integral) closure of $\bfQ_p$. Conseqently, \Cref{thm:47} provides a complete description of the expansion of $p$-adic complex numbers in $\bfL_p$.
\end{remark}
We observe that the combination of \Cref{prop:54845} and \Cref{thm:47} yields a necessary condition for a $p$-adic Hahn series with bounded support to be algebraic over $\breve{\bfQ}_p$:
\begin{proposition}\label{prop:167}
    Let $f=\sum_{q\in\bfQ}[f(q)]p^q\in\bfL_p$ be a $p$-adic Hahn series with bounded support. If $f$ is algebraic over $\breve{\bfQ}_p$, then the function $F_f$ is QTR.
\end{proposition}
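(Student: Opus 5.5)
Since $f$ is algebraic over $\breve{\bfQ}_p$, it lies in $\bfC_p$, viewed as the completed integral closure of $\breve{\bfQ}_p$ inside $\bfL_p$. The plan is to write $f$ as a $p$-adic limit of series $\sum_q[x_q]p^q$ whose untilts $x=\sum_q x_qt^q$ lie in $L$, the completed integral closure of $\overline{\bfF}_p\pparen{t}$ in $\overline{\bfF}_p\pparen{t^\bfQ}$, as provided by \Cref{thm:47}. Then I will use boundedness of $\opn{Supp}(f)$ to replace this limit by a single element coming from an honestly algebraic equal-characteristic series, and finally apply \Cref{prop:54845}.

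\textbf{Step 1 (truncation at the right level).} Choose an integer $B$ with $\opn{Supp}(f)\subset(-\infty,B)$. By \Cref{thm:47} there is $g=\sum_q[x_q]p^q$ with $x\in L$ and $v(f-g)>B$. Here $L$ is itself the $t$-adic completion of the integral closure, so I further choose $y\in\overline{\bfF}_p\pparen{t}^{\opn{alg}}$ with $v(x-y)>B$. I then need the key compatibility fact: if two equal-characteristic series agree below level $B$, their Teichmüller-type images $\Theta(\cdot)$ in $\bfL_p$ also agree below $B$. The reason is that $\Theta(x)-\Theta(y)$ only involves exponents $\geq B$ after the standard expansion, since the terms below $B$ are literally equal and Teichmüller digits are additive-free in the sense of the null-series normalization. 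From this I get $v(f-\Theta(y))>B$ as well.

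\textbf{Step 2 (identifying coefficients).} By uniqueness of the standard expansion (\Cref{prop:49318}), the coefficient functions satisfy $F_f(q)=F_y(q)$ for every $q<B$. Since $F_f$ vanishes for $q\geq B$, we get $F_f=F_y\cdot\mathbf 1_{(-\infty,B)}$. By \Cref{prop:54845} (algebraic means integral up to a power of $t$), $F_y$ is QTR with respect to some data $(a,b,c,M,N)$. So it remains to show that the truncation of a QTR function to $(-\infty,B)$ is again QTR, possibly after changing the data.

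\textbf{Step 3 (truncation preserves QTR; the main obstacle).} Condition (1) of \Cref{def:16557} is inherited by subsets. For condition (2), the operation that inserts $N$ extra zeros sends $\frac1a(w-0.q_1q_2\cdots)$ to $\frac1a(w-0.q_1\cdots q_k0\cdots0q_{k+M+1}\cdots)$. This move keeps the integer part $w$ and only increases the value, by at most a quantity tending to $0$ as $k$ grows. The danger is that it may cross the cutoff $B$ and so destroy the recurrence. To handle this, I would choose $B$ of the form $w_0/a$ with $w_0\in\bfZ$; this is possible because we may enlarge $B$. Every point with integer part $w\leq w_0-1$ lies in $\frac1a(w-1,w]$, so points both before and after the move stay on the same side of $w_0/a$. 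The only boundary case is $w=w_0$ with all digits zero, which the zero-insertion move fixes. Hence truncation at $w_0/a$ commutes with the move, and the truncated function is QTR with the same data.

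Verifying this carefully, including the well-ordering, the convention excluding infinite strings of $p-1$, and the step $x\rightsquigarrow y$ inside $L$ together with the compatibility of $\Theta$ with truncation, is where I expect the real work to lie.
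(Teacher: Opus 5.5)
Your proposal is correct and follows essentially the same route as the paper: approximate $f$ via \Cref{thm:47} and then by an algebraic $y$, match coefficient functions below an integer cutoff, apply \Cref{prop:54845} to $F_y$, and use the fact that truncation at an integer level preserves QTR with the same data (\Cref{lem:36014}). One small point: you do not need to enlarge $B$, since your integer $B$ is already of the form $w_0/a$ with $w_0=aB\in\bfZ$. Enlarging it after choosing $y$ would actually break the coefficient matching, because $a$ depends on $y$.
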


This is a direct consequence of the following lemma:
\begin{lemma}\label{lem:36014}
    For any QTR function $\phi\colon\bfQ\lto\overline{\bfF}_p$, and any integer $r$, the restriction of $\phi$ to $(-\infty,r]\cap\bfQ$ is still QTR.
\end{lemma}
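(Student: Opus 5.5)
We prove the lemma by taking $\psi$ to be the restriction $\phi\cdot\mathbf 1_{(-\infty,r]}$, keeping $(a,b,c,N)$ from the data $(a,b,c,M,N)$ of $\phi$, and enlarging $M$ if necessary. Clearly $\opn{Supp}(\psi)=\opn{Supp}(\phi)\cap(-\infty,r]$ is well-ordered, being a subset of a well-ordered set. Condition (1) of \Cref{def:16557} is inherited verbatim, since it only constrains elements of the support.

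The substance is condition (2). Let $q=\frac1a(w-0.q_1q_2\cdots)$ with $w\geq -b$ and $\sum q_n\leq c$, and suppose there are $M'$ consecutive zeros at positions $k+1,\dots,k+M'$. Let $q'$ be the rational number obtained by inserting $N$ further zeros there. We must show $\psi(q)=\psi(q')$. Since $\phi(q)=\phi(q')$ whenever $M'\geq M$, it suffices to show that $q$ and $q'$ lie on the same side of $r$.

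The key observation is that inserting zeros after position $k$ only moves the tail of the expansion:
$$aq'-aq=\sum_{n>k+M'} q_n\bigl(p^{-n}-p^{-n-N}\bigr)\geq 0,\qquad |aq'-aq|\leq c\,p^{-(k+M')}.$$
So $q\leq q'$, and the two values can straddle $r$ only if $ar\in(aq,aq']$. Both $aq$ and $aq'$ share the same integer part $w$ and the same first $k+M'$ digits. If $ar$ is not of the form $w-0.\ldots$ with these digits, we are done. Since $r$ is an integer, $ar$ is an integer, and the difficulty reduces to checking whether an integer can lie in $(aq,aq']$. Write $aq=w-\delta$ and $aq'=w-\delta'$ with $0\leq\delta'\leq\delta<1$. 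The only integer that can lie in $(w-\delta,w-\delta']$ is $w$ itself, and this happens exactly when $\delta'=0$ and $\delta>0$. But $\delta'=0$ forces all digits $q_n$ to vanish, which in turn gives $\delta=0$. Hence no integer separates $aq$ from $aq'$.

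Therefore $q\leq r\iff q'\leq r$, and so $\psi(q)=\phi(q)\mathbf 1_{q\leq r}=\phi(q')\mathbf 1_{q'\leq r}=\psi(q')$. In particular the same data $(a,b,c,M,N)$ work and no enlargement of $M$ is needed.

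The main obstacle is this boundary comparison: showing that the rescaled cutoff $ar$ cannot fall strictly between $aq$ and $aq'$. The integrality of $r$, and hence of $ar$, is exactly what makes it work. A secondary point needing care is the base-$p$ normalization convention, namely the "$w-0.q_1\cdots$" form with finitely many nonzero digits, which ensures the integer part $w$ is unchanged by the insertion. Once the lemma is established, \Cref{prop:167} follows by taking $r\geq\sup\opn{Supp}(f)$, truncating the QTR function obtained from \Cref{thm:47} and \Cref{prop:54845}.
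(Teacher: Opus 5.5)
Your proof is correct and is essentially the paper's argument: keep the same data $(a,b,c,M,N)$, note $q\le q'$, and use that $ar\in\bfZ$ while $w-1<aq\le aq'\le w$, so $q$ and $q'$ lie on the same side of $r$ (the paper phrases this as $q\le r\Rightarrow w\le ar\Rightarrow q'\le r$). One minor slip: with the cutoff $\mathbf{1}_{q\le r}$, straddling means $ar\in[aq,aq')$ rather than $(aq,aq']$; the extra endpoint $ar=aq$ forces $\delta=0$, hence all digits vanish and $q=q'$, so the argument still goes through.
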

\begin{proof}
    Write $\phi_r$ for the restriction of $\phi$ to $(-\infty,r]$, so that $\phi_r(q)=\phi(q)$ for $q\leq r$ and $\phi_r(q)=0$ for $q>r$; in particular $\phi_r$ and $\phi$ agree on $(-\infty,r]$ and $\opn{Supp}(\phi_r)=\opn{Supp}(\phi)\cap(-\infty,r]$. We check that $\phi_r$ is QTR with the same data $(a,b,c,M,N)$ as $\phi$.

    Since $\opn{Supp}(\phi_r)\subseteq\opn{Supp}(\phi)$ is well-ordered and the first condition of \Cref{def:16557} only constrains elements of the support, it holds for $\phi_r$ with the same $a,b,c$.

    For the second condition, fix an integer $w\geq -b$ and a rational number
    $$q=\frac{1}{a}\left(w-0.q_1\cdots q_n\cdots\right),\qquad q_n\in\bfN_{<p},\ \sum q_n\leq c,$$
    with $q_{k+1}=\cdots=q_{k+M}=0$ for some integer $k\geq 0$, and set
    $$q'\coloneqq\frac{1}{a}\left(w-0.q_1\cdots q_k\overbrace{0\cdots0}^{M+N}q_{k+M+1}\cdots\right).$$
    We must show $\phi_r(q)=\phi_r(q')$. Inserting $N$ extra zeros shifts the digits $q_{k+M+1},q_{k+M+2},\dots$ to the right, so the decimal part of $aq'$ is at most that of $aq$; hence $aq'\geq aq$, i.e.\ $q'\geq q$.

    If $q>r$, then $q'\geq q>r$, so $\phi_r(q)=0=\phi_r(q')$.

    If $q\leq r$, then from $aq=w-0.q_1\cdots\leq ar$ together with $0\leq 0.q_1\cdots<1$ and $w,ar\in\bfZ$ we get $w\leq ar$; therefore $aq'=w-0.q_1\cdots q_k\overbrace{0\cdots0}^{M+N}q_{k+M+1}\cdots\leq w\leq ar$, so $q'\leq r$ as well. Then $\phi_r(q)=\phi(q)$ and $\phi_r(q')=\phi(q')$, which are equal because $\phi$ satisfies the second condition of \Cref{def:16557}.

    In either case $\phi_r(q)=\phi_r(q')$, so $\phi_r$ is QTR.
\end{proof}

\begin{proof}[Proof of \Cref{prop:167}]
    Suppose that $\opn{Supp}(f)$ is bounded from above by some $u\in \bfZ$. Then \Cref{thm:47} shows that there exists $f'\in\bfL_p$ such that $F_f$ and $F_{f'}$ coincide on $(-\infty,u]$ and $f'$ lies in the completed integral closure of $\overline{\bfF}_p((t))$. Similarly, there exists $f''\in \bfL_p^\flat$ such that $F_{f'}$ and $F_{f''}$ coincide on $(-\infty,u]$ and $f''\in \overline{\bfF}_p((t))^{\opn{alg}}$. By \Cref{prop:54845}, the function $F_{f''}$ is QTR, and the result follows from \Cref{lem:36014}.
\end{proof}

\subsection{Ray decomposition of the QTR sets}\label{sec:23337}
Although the definition of QTR sets may appear complicated, we show in this section that if one imposes restrictions on their order type, then a bounded QTR set can be decomposed into finitely many pieces with a very simple structure, which we call the ray decomposition.

Throughout \Cref{sec:23337}, we fix a set $S\subseteq S_{a,b,c}$ with the following properties:
\begin{enumerate}[label=(S\arabic*),ref=\textup{S\arabic*}]
    \item\label{it:65165} %
          $S\subseteq S_{a,b,c,m}$ for some integer $m\geq -b$, where
$$S_{a,b,c,m}=\left\{\frac{1}{a}\left(m-\sum_{k=1}^\infty q_kp^{-k}\right)\middle\vert q_k\in \bfN_{<p},\ \sum q_k\leq c\right\}\subset S_{a,b,c}.$$
    \item\label{it:49436} There exists $M,N\in \bfN$ such that if an element $q\in S$ of the form
          $$q=\frac{1}{a}\left(m-0.q_1\cdots q_n\cdots\right),\ q_n\in\bfN_{<p},\ \sum q_n\leq c$$
          satisfies that there exist $M$ consecutive $0$ in the sequence $q_1,q_2,\cdots$, i.e. $q_{k+1}=q_{k+2}=\cdots=q_{k+M}=0$ for some integer $k\geq 0$, then the following rational number also belongs to $S$:
          $$\frac{1}{a}\left(m-0.q_1\cdots q_k\overbrace{0\cdots0}^{M+N}q_{k+M+1}\cdots\right).$$
    \item\label{it:31201} The order type of $S$ is strictly less than $\omega^2$. In other words, $S$ admits only finitely many accumulation points.
\end{enumerate}
Such a set $S$ is called $(a,b,c,m,M,N)$-admissible.

\begin{remark}
    Note that the condition \Cref{it:49436} is strictly weaker than the one we use to define QTR sets. On the other hand, as shown in the course of the proof of \Cref{lem:42556}, bounded QTR sets are always a finite disjoint union of QTR sets that are contained in some $S_{a,b,c,m}$ for some integer $m\geq -b$. Thus the assumption $S\subseteq S_{a,b,c,m}$ entails no loss of generality when we study the structure of bounded QTR sets.
\end{remark}

We begin by defining the concepts of words and gap vectors:
\begin{definition}\leavevmode
    \begin{enumerate}
        \item By a \textbf{word}, we mean an ordered tuple of elements in $\{1,\cdots,p-1\}$ of finite length. For an element $q\in S$ of the form
              $$q=\frac{1}{a}\left(m-0.q_1\cdots q_n\cdots\right),$$
              the \textbf{word} of $q$ is the tuple of nonzero digits of $0.q_1\cdots q_n\cdots$.
        \item By a \textbf{gap vector}, we mean an ordered tuple of natural numbers of finite length. For an element
              $$q=\frac{1}{a}\left(m-0.q_1\cdots q_n\cdots\right)$$
              of $S$, the $i$-th coordinate of the \textbf{gap vector} of $q$ is the number of consecutive $0$ between the $(i-1)$-th and $i$-th nonzero digits of $0.q_1\cdots q_n\cdots$. In particular, the first coordinate of the gap vector of $q$ is the number of consecutive $0$ before the first nonzero digit of $0.q_1\cdots q_n\cdots$.
    \end{enumerate}
\end{definition}
\begin{example}
    Let $q=\frac{1}{a}(m-0.001104514)$, then the word of $q$ is $(1,1,4,5,1,4)$ and the gap vector of $q$ is $(2,0,1,0,0,0)$.
\end{example}

Since the value of $0.q_1\cdots q_n\cdots$ is determined by the positions and the values of its nonzero digits, an element of $S$ is uniquely determined by the pair consisting of its word and its gap vector. We shall freely identify an element of $S$ with this pair; note that the zeros following the last nonzero digit of $0.q_1\cdots q_n\cdots$ are not recorded by the gap vector.

The following lemma is a direct consequence of \Cref{it:65165}:
\begin{lemma}\label{lem:5966}
    For any element of $S$, the length of its word is at most $c$. In particular, there are only finitely many possible words for elements of $S$.
\end{lemma}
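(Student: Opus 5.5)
Since the statement follows directly from \Cref{it:65165}, the plan is to unpack the definition of $S_{a,b,c,m}$. Take an arbitrary $q\in S$. By \Cref{it:65165}, we can write
$$q=\frac{1}{a}\left(m-\sum_{k=1}^\infty q_kp^{-k}\right)$$
with $q_k\in\bfN_{<p}$ and $\sum_k q_k\leq c$.

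The first step is to check that this representation coincides with the expansion $0.q_1\cdots q_n\cdots$ used to define the word of $q$. Since $\sum q_k\le c<\infty$, only finitely many $q_k$ are nonzero, so the expansion has finite length. In particular, it is not an expansion ending in an infinite string of $(p-1)$'s, and it is therefore the normalized base-$p$ expansion fixed at the beginning of \Cref{sec:55224}. Because $a$ and $m$ are fixed, the digits are then uniquely determined by $q$.

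Next, bound the length of the word. The word of $q$ lists the nonzero digits $q_k$, and each nonzero digit satisfies $q_k\geq 1$. Hence the number of nonzero digits is at most $\sum_k q_k\le c$, so the word has length at most $c$.

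For the finiteness claim, note that a word is a tuple with entries in $\{1,\dots,p-1\}$ of length at most $c$. The number of such tuples is
$$\sum_{\ell=0}^{c}(p-1)^\ell<\infty,$$
so only finitely many words can occur for elements of $S$.

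The only point needing care is the identification of the representation coming from \Cref{it:65165} with the canonical expansion. This is settled by the finiteness of the digit sum, as explained above, and no other obstacle is expected.
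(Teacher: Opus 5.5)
Your proposal is correct and follows the paper's approach. The paper states this lemma without proof as a direct consequence of \Cref{it:65165}, and your unpacking supplies exactly that argument: the digit sum is at most $c$ and each nonzero digit is at least $1$, so there are at most $c$ nonzero digits, and there are only finitely many tuples over $\{1,\dots,p-1\}$ of length at most $c$. Your check that the finite expansion is the normalized one, so that the word is well defined for fixed $a$ and $m$, is a correct extra detail.
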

For any integer $t\geq 1$, denote by $\bfe^{(i)}$ the $i$-th standard basis vector of $\bfN^t$. The condition \Cref{it:49436} can be rephrased as follows:
\begin{lemma}\label{lem:30089}
    Fix a word $\mathbf{d}=(d_1,\ldots,d_t)$ and let $G_{\mathbf{d}}\subseteq \bfN^t$ be the set of gap vectors of elements of $S$ with word $\mathbf{d}$. If $g\in G_{\bfd}$ and $g_i\geq M$ for some $i\leq t$, then $g+N \bfe^{(i)}\in G_{\bfd}$.
\end{lemma}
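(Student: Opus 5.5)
The plan is to translate condition (S2) directly into the language of words and gap vectors. Let $g=(g_1,\ldots,g_t)\in G_{\mathbf d}$ be the gap vector of an element $q\in S$ whose word is $\mathbf d=(d_1,\ldots,d_t)$. Suppose $g_i\geq M$ for some $i\leq t$. I will first locate the position of the nonzero digits of $0.q_1q_2\cdots$. The $j$-th nonzero digit $d_j$ sits at position $P_j=\sum_{l\le j}g_l+j$. So the $g_i$ zeros immediately preceding $d_i$ occupy positions $P_{i-1}+1,\ldots,P_{i-1}+g_i$, where $P_0=0$.

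Next I set $k\coloneqq P_{i-1}\geq 0$. Since $g_i\geq M$, we have $q_{k+1}=\cdots=q_{k+M}=0$, so the hypothesis of (S2) is met with this $k$. Then (S2) says that
$$q'\coloneqq\frac1a\left(m-0.q_1\cdots q_k\overbrace{0\cdots0}^{M+N}q_{k+M+1}\cdots\right)\in S.$$

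Finally I read off the word and gap vector of $q'$. The digit string of $q'$ is obtained from that of $q$ by replacing the block of $M$ zeros at positions $k+1,\ldots,k+M$ with $M+N$ zeros. Equivalently, $N$ extra zeros are inserted immediately before position $k+M+1$, and that position still lies in the zero run preceding $d_i$ or is exactly the position of $d_i$ when $g_i=M$. This insertion leaves the sequence of nonzero digits unchanged, so the word of $q'$ is still $\mathbf d$. It does lengthen the run of zeros between $d_{i-1}$ (or the start, if $i=1$) and $d_i$ by exactly $N$, and leaves every other run unchanged. Hence the gap vector of $q'$ is $g+N\mathbf e^{(i)}$, so $g+N\mathbf e^{(i)}\in G_{\mathbf d}$.

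The argument is essentially bookkeeping, and the only delicate point is the index alignment. I must check that the chosen $k$ places the block of $M$ zeros entirely inside the $i$-th gap. I must also verify that the condition $\sum q_n\le c$ and the representation of $q'$ in $S_{a,b,c,m}$ are preserved, which holds since the digit sum is unchanged. Uniqueness of the word/gap-vector pair, noted before the lemma, then identifies $q'$ with the pair $(\mathbf d,g+N\mathbf e^{(i)})$.
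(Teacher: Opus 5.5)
Your proposal is correct and is the intended argument. The paper gives no written proof and presents the lemma as a direct rephrasing of (S2). Your choice $k=P_{i-1}$ puts the $M$-zero block at the start of the $i$-th gap. Inserting $N$ zeros there keeps the word $\mathbf{d}$ and raises only the $i$-th gap coordinate by $N$, and the uniqueness of finite base-$p$ expansions makes this reading of $q'$'s gap vector legitimate. That is exactly the bookkeeping the paper leaves implicit.
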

We observe that the ordinal bound of $S$ (i.e. condition \Cref{it:31201}) imposes a strong restriction on the gap vectors of elements of $S$:
\begin{lemma}\label{lem:27279}
    For every word $\bfd$ occurring in $S$ and any gap vector $g\in G_{\bfd}$, at most one coordinate of $g$ is at least $M$.
\end{lemma}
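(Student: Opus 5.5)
Suppose, for a contradiction, that some word $\bfd=(d_1,\ldots,d_t)$ occurring in $S$ has a gap vector $g\in G_{\bfd}$ with two coordinates $i<j$ satisfying $g_i\geq M$ and $g_j\geq M$. The plan is to build from $g$ a two-parameter family of elements of $S$ whose order type is at least $\omega^2$, contradicting \Cref{it:31201}.

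First, apply \Cref{lem:30089} repeatedly. Adding $N\bfe^{(i)}$ keeps the $i$-th coordinate at least $M$ and leaves the $j$-th unchanged, and the same holds with the roles of $i$ and $j$ swapped. Hence
$$g+kN\bfe^{(i)}+lN\bfe^{(j)}\in G_{\bfd}\quad\text{for all }k,l\in\bfN.$$
Let $x_{k,l}\in S$ be the element with word $\bfd$ and this gap vector, so that
$$x_{k,l}=\frac1a\bigl(m-D_{k,l}\bigr),\qquad D_{k,l}=0.q_1q_2\cdots$$
is the corresponding base-$p$ fraction.

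Second, describe the order on $\{x_{k,l}\}$. Since $a>0$, we have $x_{k,l}<x_{k',l'}$ if and only if $D_{k,l}>D_{k',l'}$. Increasing the $i$-th gap shifts every nonzero digit from the $i$-th onward further to the right, so $D$ strictly decreases. It is strictly smaller than any value obtained with a smaller $i$-th gap, regardless of the $j$-th gap: the first $i-1$ digits agree, and the $i$-th nonzero digit occurs strictly later. Concretely, for $k<k'$ the fraction $D_{k,l}$ has a nonzero digit $d_i$ at a position where $D_{k',l'}$ has $0$, with identical earlier digits, so $D_{k,l}>D_{k',l'}$ for all $l,l'$. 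For fixed $k$, increasing $l$ strictly decreases $D_{k,l}$ by the same argument at position $j$. Therefore $(k,l)\mapsto x_{k,l}$ is strictly increasing for the lexicographic order on $\bfN\times\bfN$, and $S$ contains a subset of order type $\omega^2$.

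Third, translate this into accumulation points. For each fixed $k$, the sequence $(x_{k,l})_{l}$ is strictly increasing and bounded above by $x_{k+1,0}$, so it converges to some limit $L_k\le x_{k+1,0}$, and $L_k$ is an accumulation point of $S$. Moreover $x_{k,l}<L_k\le x_{k+1,0}<L_{k+1}$, so the $L_k$ are pairwise distinct. Thus $S$ has infinitely many accumulation points, contradicting \Cref{it:31201}. Equivalently, a well-ordered set containing a subset of type $\omega^2$ has order type at least $\omega^2$.

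The only delicate step is the digit comparison in the second step. One must check that shifting nonzero digits later, with trailing digits in $\{0,\ldots,p-1\}$ and only finitely many nonzero digits, gives a strict inequality. This holds because the expansions are finite and the first differing position has a nonzero digit in one fraction and $0$ in the other. Everything else is a direct application of \Cref{lem:30089}.
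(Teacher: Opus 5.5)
Your proof is correct and is essentially the paper's argument: you iterate \Cref{lem:30089} to get $g+kN\bfe^{(i)}+lN\bfe^{(j)}\in G_{\bfd}$, show that the corresponding elements of $S$ increase lexicographically in $(k,l)$, and contradict \Cref{it:31201} with a subset of order type $\omega^2$. Your explicit digit comparison and the extra step producing infinitely many distinct accumulation points $L_k$ spell out details the paper leaves implicit, and your shift $kN\bfe^{(i)}$ is the intended one where the paper's text writes $s\bfe^{(i)}$.
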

\begin{proof}
    Suppose there exist $i<j$ such that $g_i\geq M$ and $g_j\geq M$. Then by \Cref{lem:30089}, one has $g+s\bfe^{(i)}+tN\bfe^{(j)}\in G_{\bfd}$ for any $s,t\in\bfN$. We set $v_{s,t}$ to be the element of $S$ with word $\bfd$ and gap vector $g+s\bfe^{(i)}+tN\bfe^{(j)}$. For any fixed $s$, the sequence $\{v_{s,t}\}_{t\in\bfN}$ is strictly increasing and consequently has order type $\omega$. On the other hand, for any $s<s'$, one has $v_{s,t}<v_{s',t'}$ for any $t,t'\in\bfN$. Therefore, the sequence $\{v_{s,t}\}_{s,t\in\bfN}$ has order type $\omega^2$, which contradicts the assumption on $S$.
\end{proof}
\begin{lemma}\label{lem:19048}
    For any fixed word $\bfd$, one can write
    $$G_{\bfd}=W\cup A_1\cup\cdots \cup A_r,$$
    where $W$ is a finite set, and each $A_i$ has the form $\left\{v^{(i)}+kN \bfe^{(u)}\middle\vert k=0,1,2,\cdots\right\}$, where $v^{(i)}\in G_{\bfd}$ satisfies $v_j^{(i)}\geq M$ if and only if $j=u$.
\end{lemma}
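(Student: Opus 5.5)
Fix the word $\bfd=(d_1,\ldots,d_t)$. The plan is to split $G_{\bfd}$ according to which coordinate, if any, is $\geq M$; by \Cref{lem:27279} there is at most one such coordinate. Let $G^{(0)}$ be the set of $g\in G_{\bfd}$ with all coordinates $<M$. It is contained in $\{0,\ldots,M-1\}^t$, hence finite, and it goes into $W$. For $u=1,\ldots,t$ let $G^{(u)}$ be the set of $g\in G_{\bfd}$ with $g_u\geq M$ and $g_j<M$ for $j\neq u$. Then $G_{\bfd}=G^{(0)}\sqcup G^{(1)}\sqcup\cdots\sqcup G^{(t)}$, and it suffices to treat each $G^{(u)}$ separately.

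Fix $u$. Group the elements of $G^{(u)}$ by the pair consisting of their off-$u$ coordinates $(g_j)_{j\neq u}\in\{0,\ldots,M-1\}^{t-1}$ and the residue of $g_u$ modulo $N$. There are finitely many such classes. Within a class, the elements differ only in the $u$-th coordinate, by multiples of $N$. Choose $v$ in the class with $g_u$ minimal. By \Cref{lem:30089}, applied iteratively (each $v+kN\bfe^{(u)}$ still has $u$-th coordinate $\geq M$), every $v+kN\bfe^{(u)}$ with $k\geq0$ lies in $G_{\bfd}$. Since its other coordinates are unchanged, it lies in $G^{(u)}$ and in the same class. By minimality of $v$, the class is exactly the ray $\{v+kN\bfe^{(u)}\mid k\geq0\}$. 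So each nonempty class gives one $A_i$, with $v^{(i)}=v$ satisfying $v^{(i)}_j\geq M$ if and only if $j=u$.

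Collecting the finitely many rays over all $u$ and all classes, together with the finite set $W=G^{(0)}$, gives the decomposition. If $N$ may be $0$ (it ranges over $\bfN$ in (S2)), the step "consecutive shifts" degenerates. In that case each class is a single point (the residue modulo $N$ is replaced by the exact value of $g_u$), and we would need finiteness differently. I expect this degenerate case, or more generally making sure there are only finitely many classes, to be the only point requiring care. The cleanest fix is to assume $N\geq1$: one may always replace $N$ by a positive multiple, since iterating \Cref{lem:30089} gives closure under adding $N'=kN$, or one can take $N\geq1$ as in the QTR data. The key observation is that the off-$u$ coordinates are bounded by $M$, which makes the number of classes finite.
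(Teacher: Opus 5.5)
Your proof is correct and follows the paper's argument. The paper also puts the vectors with all coordinates $<M$ into $W$, and sorts the remaining ones (which by \Cref{lem:27279} have exactly one coordinate $\geq M$) by the datum $\bigl(u(g),(g_k)_{k\neq u(g)},g_{u(g)}\bmod N\bigr)$. It then picks the element with minimal $u$-th coordinate in each class and applies \Cref{lem:30089} repeatedly to see that the class is the whole progression.

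Your caveat about $N$ is well taken: the paper's proof tacitly needs $N\geq 1$, which the QTR data $N\in\bfZ_{>0}$ supplies, and for $N=0$ the statement can genuinely fail. Note, however, that ``replacing $N$ by a positive multiple'' does nothing when $N=0$, so only the appeal to the QTR data is an actual fix.
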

\begin{proof}
    Take $W$ to be the set of gap vectors in $G_{\bfd}$ with all coordinates less than $M$. Since the gap vectors have fixed length $t$ and each coordinate then lies in $\{0,1,\ldots,M-1\}$, the set $W$ is finite.

    By \Cref{lem:27279}, every $g\in G_{\bfd}\backslash W$ has \emph{exactly} one coordinate that is at least $M$; denote its index by $u(g)\in\{1,\ldots,t\}$. To such a $g$ we attach the datum
    $$\delta(g)=\left(u(g),\ (g_k)_{k\neq u(g)},\ g_{u(g)}\bmod N\right).$$
    Since $u(g)\in\{1,\ldots,t\}$, each $g_k$ with $k\neq u(g)$ lies in $\{0,\ldots,M-1\}$, and the last entry lies in $\{0,\ldots,N-1\}$, the datum $\delta(g)$ ranges over a finite set. Let $\delta_1,\ldots,\delta_r$ be the values actually attained and put $A_j=\left\{g\in G_{\bfd}\backslash W\mid \delta(g)=\delta_j\right\}$, so that
    $$G_{\bfd}=W\cup A_1\cup\cdots\cup A_r.$$

    Fix $j$ and let $u$ be the index recorded in $\delta_j$. Any two elements of $A_j$ agree in every coordinate other than the $u$-th and have $u$-th coordinates congruent modulo $N$, hence differ by an integer multiple of $N\bfe^{(u)}$. Let $v^{(j)}$ be the element of $A_j$ with smallest $u$-th coordinate. Then
    $$A_j\subseteq\left\{v^{(j)}+kN\bfe^{(u)}\mid k=0,1,2,\ldots\right\}.$$
    Conversely $v^{(j)}_u\geq M$, so repeated application of \Cref{lem:30089} gives $v^{(j)}+kN\bfe^{(u)}\in G_{\bfd}$ for every $k\geq0$; each such vector has datum $\delta_j$ and therefore lies in $A_j$. Thus $A_j=\left\{v^{(j)}+kN\bfe^{(u)}\mid k\geq0\right\}$ has the required form, with $v^{(j)}_l\geq M$ if and only if $l=u$.
\end{proof}
\begin{definition}\label{def:33761}
    A \textbf{ray} in $S$ is a subset of $S$ of the form
    \begin{equation}\label{eq:16714}
        \left\{\frac{1}{a}\left(m-0.q_1q_2\cdots q_{s}\overbrace{0\cdots 0}^{kN}q_{s+1}\cdots\right)\middle\vert k=0,1,\cdots\right\},
    \end{equation}
    where $\frac{1}{a}(m-0.q_1q_2\cdots q_n\cdots)$ is a fixed element of $S$ such that $0.q_{s+1}q_{s+2}\cdots\neq 0$ and there is one and only one coordinate of the gap vector of it that is at least $M$.
\end{definition}
\begin{remark}\label{rem:ray-progression}
    Under the identification of an element of $S$ with its (word, gap vector) pair, inserting $kN$ zeros at the unique gap of length at least $M$, say the $u$-th one, corresponds to adding $kN\bfe^{(u)}$ to the gap vector. Hence, for a fixed word $\bfd$, the rays in $S$ with word $\bfd$ are precisely the arithmetic progressions $\{v+kN\bfe^{(u)}\mid k\geq 0\}$ furnished by \Cref{lem:19048}. Geometrically, a ray as \eqref{eq:16714} is a strictly increasing sequence converging to its limit point $\frac{1}{a}(m-0.q_1\cdots q_{s})\in\bfQ$; this limit point is an accumulation point of $S$, which explains why the finiteness of the accumulation points (condition \Cref{it:31201}) controls the number of rays.
\end{remark}
\begin{corollary}\label{coro:8081}
    The set $S$ can be written as a union of a finite set and finitely many rays.
\end{corollary}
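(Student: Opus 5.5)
The argument is a word-by-word assembly of \Cref{lem:19048}, converting gap-vector progressions back into elements of $S$. By \Cref{lem:5966}, only finitely many words $\bfd$ occur for elements of $S$, and each element of $S$ is determined by its (word, gap vector) pair. Hence
$$S=\bigcup_{\bfd}\{q\in S\mid q \text{ has word } \bfd\},$$
a finite union. It therefore suffices to show that, for each fixed word $\bfd=(d_1,\ldots,d_t)$, the elements of $S$ with word $\bfd$ form a finite set together with finitely many rays.

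Fix $\bfd$ and apply \Cref{lem:19048} to write
$$G_{\bfd}=W\cup A_1\cup\cdots\cup A_r,$$
with $W$ finite and $A_j=\{v^{(j)}+kN\bfe^{(u_j)}\mid k\geq 0\}$. Here $v^{(j)}\in G_{\bfd}$ has exactly one coordinate $\geq M$, namely coordinate $u_j$. Since an element of $S$ with word $\bfd$ is determined by its gap vector, the elements with gap vectors in $W$ form a finite set.

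For each $j$, let $x_j\in S$ be the element with word $\bfd$ and gap vector $v^{(j)}$. Write $x_j=\frac{1}{a}(m-0.q_1q_2\cdots)$ and let $s$ be the position just before the $u_j$-th nonzero digit, i.e.\ $s=\sum_{i<u_j}(v^{(j)}_i+1)$. As observed in \Cref{rem:ray-progression}, inserting $kN$ zeros after position $s$ adds exactly $kN\bfe^{(u_j)}$ to the gap vector and leaves the word unchanged. So the elements corresponding to $A_j$ are exactly the set \eqref{eq:16714} built from $x_j$ and $s$.

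It remains to check that this set is a ray in the sense of \Cref{def:33761}. First, $0.q_{s+1}q_{s+2}\cdots\neq 0$, because it contains the $u_j$-th nonzero digit $d_{u_j}$, which exists since $u_j\leq t$. Second, $x_j$ has exactly one gap coordinate $\geq M$, by the property of $v^{(j)}$ in \Cref{lem:19048}. Thus each $A_j$ gives a ray, and the set for the word $\bfd$ is a finite set plus $r$ rays.

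Taking the union over the finitely many words finishes the proof. The only point needing care is the bookkeeping between the index $u$ of the gap vector and the digit position $s$ in \eqref{eq:16714}, since inserting zeros must happen exactly at the $u$-th gap. Everything else follows directly from \Cref{lem:19048} and \Cref{lem:5966}.
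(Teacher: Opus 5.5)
Your proposal is correct and follows the paper's proof, which likewise combines the finiteness of the set of words (\Cref{lem:5966}) with the word-by-word decomposition of gap vectors in \Cref{lem:19048}. You additionally spell out how each progression $\{v^{(j)}+kN\mathbf{e}^{(u_j)}\mid k\geq 0\}$ becomes a ray in the sense of \Cref{def:33761}; the paper leaves this translation to \Cref{rem:ray-progression}.
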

\begin{proof}
    Different decimals $0.q_1q_2\cdots q_n\cdots$ in \Cref{def:33761} give rise to disjoint rays. The result follows from \Cref{lem:5966} and \Cref{lem:19048}.
\end{proof}
In fact, the decomposition in \Cref{coro:8081} can be further refined to a disjoint union of a finite set and finitely many rays. This is guaranteed by the following lemma:
\begin{lemma}\label{lem:59667}
    Let $R_1, R_2$ be two rays in $S$. Then exactly one of the following cases can happen:
    \begin{enumerate}
        \item $R_1\subseteq R_2$ or $R_2\subseteq R_1$;
        \item $R_1\cap R_2$ is a finite set.
    \end{enumerate}
\end{lemma}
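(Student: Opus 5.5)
The plan is to translate everything into the (word, gap vector) coordinates introduced above, where rays become one-parameter arithmetic progressions, and then do a short case analysis. Recall that an element of $S$ is uniquely determined by its word and its gap vector. Inserting zeros into $0.q_1q_2\cdots$ never changes the sequence of nonzero digits, so all elements of a ray share the same word. Moreover, inserting $kN$ zeros right after position $s$ lands inside exactly one gap, say the $j$-th one, and adds $kN\bfe^{(j)}$ to the gap vector. Here we use $0.q_{s+1}q_{s+2}\cdots\neq 0$, so that at least one nonzero digit follows and the inserted zeros are recorded in the gap vector rather than trailing. Hence, as in \Cref{rem:ray-progression}, every ray has the form
$$R=\left\{(\bfd,\ v+kN\bfe^{(j)})\middle\vert k\geq 0\right\}$$
for a word $\bfd$, a gap vector $v\in G_{\bfd}$ and an index $j$. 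In particular, every ray is infinite, because these gap vectors are pairwise distinct. I will not need the condition that $j$ is the unique index with $v_j\geq M$; only the progression shape matters.

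Write $R_1=\{(\bfd,v+kN\bfe^{(j)})\}$ and $R_2=\{(\bfd',w+kN\bfe^{(j')})\}$. If $\bfd\neq\bfd'$, then $R_1\cap R_2=\emptyset$, which is finite. So assume $\bfd=\bfd'$, so that $v,w\in\bfN^t$ for the same $t$. There are two cases.

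\emph{Case $j\neq j'$.} Suppose $g$ is a common gap vector. Since $g\in R_1$, its $j'$-th coordinate equals $v_{j'}$. Since $g\in R_2$, its $j$-th coordinate equals $w_j$. Within $R_1$ the only free coordinate is the $j$-th, and it is now pinned to $w_j$. Hence $g$ is uniquely determined, and $\opn{card}(R_1\cap R_2)\leq 1$.

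\emph{Case $j=j'$.} If $v_l\neq w_l$ for some $l\neq j$, then $R_1\cap R_2=\emptyset$. Otherwise $v$ and $w$ differ only in the $j$-th coordinate, and both rays are progressions of step $N$ in that coordinate. If $v_j\not\equiv w_j\pmod N$, the intersection is empty. If $v_j\equiv w_j\pmod N$, say $v_j\leq w_j$, then $w=v+k_0N\bfe^{(j)}$ for some $k_0\geq 0$. It follows that $R_2\subseteq R_1$, and symmetrically $R_1\subseteq R_2$ when $w_j\leq v_j$.

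Finally, the two alternatives are mutually exclusive. If one ray contains the other, their intersection is a ray, which is infinite. So exactly one of the two cases holds.

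The only step that needs care is the first one: verifying that a ray, as literally defined in \Cref{def:33761} with an arbitrary insertion position $s$, really corresponds to adding $kN\bfe^{(j)}$ to the gap vector for a single fixed index $j$. This requires checking that the position $s$ always falls inside one well-defined gap, and that the nonvanishing of the tail keeps the inserted zeros visible in the gap vector. Once this translation is in place, the rest is the elementary coordinate comparison above.
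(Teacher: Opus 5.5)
Your proof is correct, but it follows a different route from the paper's.

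The paper argues with real values and limits. It writes $R_i=\{\frac{1}{a}(m-\alpha_i-\lambda_i p^{-Nk})\mid k\geq 0\}$ and notes that $R_i$ converges to $\frac{1}{a}(m-\alpha_i)$. When $\alpha_1=\alpha_2$, a single common point $\lambda_1p^{-Nk_1}=\lambda_2p^{-Nk_2}$ already forces one ray into the other. When $\alpha_1\neq\alpha_2$, an infinite intersection would have to converge to two different limits.

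You instead stay in the (word, gap vector) coordinates of \Cref{rem:ray-progression}. There each ray is an arithmetic progression $v+kN\bfe^{(j)}$ moving in one coordinate, and the lemma becomes a comparison of coordinates. This buys you two things:
\begin{itemize}
\item The argument is purely combinatorial and gives a sharper conclusion. When the rays are not nested, $\opn{card}(R_1\cap R_2)\leq 1$, not just finiteness.
\item You also check the mutual exclusivity in ``exactly one'', which the paper leaves implicit.
\end{itemize}

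The paper's route has its own advantages. It is shorter, and it avoids checking that the insertion point $s$ falls inside a single gap; you correctly identify that as the delicate step and settle it using $0.q_{s+1}q_{s+2}\cdots\neq 0$. It also uses the same geometric-progression parametrization and limit-point viewpoint that reappear in \Cref{prop:29055} and \Cref{lem:42556}.

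Your assertion that every ray is infinite uses $N\geq 1$. The paper needs this too, and it holds in the intended setting since QTR data have $N\in\bfZ_{>0}$.
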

\begin{proof}
    Write
    $$R_1=\left\{\frac{1}{a}\left(m-\alpha_1-\lambda_1 p^{-Nk}\right)\middle\vert k=0,1,\cdots\right\},\ R_2=\left\{\frac{1}{a}\left(m-\alpha_2-\lambda_2 p^{-Nk}\right)\middle\vert k=0,1,\cdots\right\},$$
    where $\alpha_1,\alpha_2\in [0,1)$ and $\lambda_1,\lambda_2\in (0,1)$ are rational numbers with finite length base-$p$ expansions. If $\alpha_1=\alpha_2$, then the condition $q\in R_1\cap R_2$ yields $$m-\alpha_1-aq=\lambda_1 p^{-Nk_1}=\lambda_2 p^{-Nk_2}$$
    for some $k_1,k_2\in\bfN$. Without loss of generality, we assume $k_1\leq k_2$. Then one has $\lambda_1 = \lambda_2 p^{-N(k_2-k_1)}$ and consequently $R_1\subseteq R_2$.

    If $\alpha_1\neq \alpha_2$ and $R_1\cap R_2$ is an infinite set, then $R_1\cap R_2$, as an infinite subsequence of $R_1$ (resp. $R_2$), converges to its accumulation point $\frac{1}{a}(m-\alpha_1)$ (resp. $\frac{1}{a}(m-\alpha_2)$), which forces $\alpha_1=\alpha_2$, a contradiction.
\end{proof}
\begin{proposition}\label{prop:29055}
    The set $S$ can be written as a union of a finite set and finitely many rays that are pairwise disjoint.
\end{proposition}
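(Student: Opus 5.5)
We start from \Cref{coro:8081} and write $S=F\cup R_1\cup\cdots\cup R_\ell$, with $F$ finite and $R_1,\dots,R_\ell$ rays. We then refine this decomposition in two stages, using \Cref{lem:59667}.

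The first stage removes containments. Whenever $R_i\subseteq R_j$ for some $i\neq j$, we delete $R_i$ from the list; the union does not change. Repeating this finitely many times, we may assume that no ray of the list is contained in another. In case of equal rays, we keep only one copy. By \Cref{lem:59667}, every pair $R_i,R_j$ with $i\neq j$ then has finite intersection.

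The second stage makes the rays disjoint by discarding finitely many elements. The key observation is that removing the first $k_0$ elements of a ray gives another ray. Concretely, if $R=\{\frac1a(m-\alpha-\lambda p^{-Nk})\mid k\geq 0\}$ is defined by the element $\frac1a(m-0.q_1\cdots q_n\cdots)$, then the truncated set $\{k\geq k_0\}$ is the ray defined by the element with $k_0N$ extra zeros inserted at the unique gap of length $\ge M$. This element lies in $S$ by \eqref{it:49436}, and its gap vector still has exactly one coordinate $\ge M$, namely the same one, which has only grown. So it satisfies the requirements of \Cref{def:33761}.

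Now let $E=\bigcup_{i<j}(R_i\cap R_j)$, which is a finite set. Each $R_i$ is a strictly increasing sequence (\Cref{rem:ray-progression}), so $E\cap R_i$ is contained in the first $k_i$ terms of $R_i$ for some $k_i$. Let $R_i'$ be the tail of $R_i$ starting at index $k_i$; by the observation above it is a ray. The tails $R_1',\dots,R_\ell'$ are pairwise disjoint, since any common element would lie in $E$. Put $F'=F\cup\bigcup_i(R_i\setminus R_i')$, which is finite. Then $S=F'\cup R_1'\cup\cdots\cup R_\ell'$, and we may also replace $F'$ by $F'\setminus\bigcup_i R_i'$ if disjointness from the finite part is wanted.

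The only delicate step is checking that a tail of a ray is again a ray in the sense of \Cref{def:33761}: it must still be generated by an element of $S$ whose gap vector has exactly one large coordinate, with the nonvanishing condition $0.q_{s+1}\cdots\neq0$ preserved. Both properties are clear once rays are viewed through the (word, gap vector) picture of \Cref{rem:ray-progression}.
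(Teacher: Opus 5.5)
Your proposal is correct and follows the paper's proof essentially verbatim: you start from \Cref{coro:8081}, discard rays contained in others using \Cref{lem:59667}, and then replace each ray by a tail (which is again a ray) so that the finitely many pairwise overlaps disappear, absorbing the discarded initial segments into the finite part. The remaining differences are cosmetic refinements of the same argument: per-ray cutoffs $k_i$ instead of a uniform $k_0$, keeping one copy of duplicate rays, and spelling out why a tail is again a ray.
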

\begin{proof}
    By \Cref{coro:8081}, we may write $S=E_0\cup R_1\cup\cdots\cup R_s$, where $E_0$ is a finite set and $R_1,\cdots,R_s$ are rays. By \Cref{lem:59667}, after discarding every ray that is contained in another one, we may assume that no $R_l$ is contained in another; then $R_i\cap R_j$ is a finite set for all $i\neq j$. Each ray is a strictly increasing sequence parametrized by $k=0,1,\cdots$ as in \Cref{def:33761}; for $k_0\geq 0$, let $R_l'$ denote the tail of $R_l$ consisting of the elements with $k\geq k_0$, which is again a ray. Since there are finitely many pairs $(i,j)$ and each $R_i\cap R_j$ is finite, we may choose $k_0$ large enough that $R_1',\cdots,R_s'$ are pairwise disjoint. Then $\bigcup_{l=1}^s(R_l\setminus R_l')$ is a finite set, and setting $E\coloneqq E_0\cup\bigcup_{l=1}^s(R_l\setminus R_l')$ yields the pairwise disjoint decomposition $S=E\sqcup R_1'\sqcup\cdots\sqcup R_s'$.
\end{proof}
\begin{corollary}\label{coro:48108}
    Let $U$ be a bounded QTR set with respect to the data $(a,b,c,M,N)$. If $U$ has only finitely many accumulation points, then $U$ can be written as a union of a finite set and finitely many pairwise disjoint rays, each contained in some $(a,b,c,m,M,N)$-admissible subset of $U$ with $m\geq -b$.
\end{corollary}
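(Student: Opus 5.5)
The plan is to split $U$ according to the integer part $m$ in the base-$p$ expansion and to apply \Cref{prop:29055} to each piece. By condition (1) of \Cref{def:16557}, every $q\in U$ can be written as $q=\frac{1}{a}(m-0.q_1q_2\cdots)$ with $m\geq -b$ and $\sum q_n\leq c$. Set $U_m\coloneqq U\cap S_{a,b,c,m}$. One point needs checking here: the expansion of $aq$ as $w-0.q_1\cdots$ with a finite digit sum is unique, once the terminating-expansion convention is fixed and $0.q_1\cdots\in[0,1)$. Granting this, the sets $U_m$ are pairwise disjoint and $U=\bigsqcup_{m\geq -b}U_m$. Since $U$ is bounded above, say by $r$, we have $m\leq ar+1$ for every nonempty $U_m$. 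Hence only finitely many $m$ occur, namely $-b\leq m\leq ar+1$, and it is enough to treat each $U_m$ separately.

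Next I will check that each $U_m$ is $(a,b,c,m,M,N)$-admissible. Condition \eqref{it:65165} holds by construction. For condition \eqref{it:49436}, take $q\in U_m$ having $M$ consecutive zeros after position $k$. By condition (2) of \Cref{def:16557}, the value of the QTR function at $q$ equals its value at $q'=\frac{1}{a}(m-0.q_1\cdots q_k0^{M+N}q_{k+M+1}\cdots)$. That value is nonzero, so $q'\in U$. The digit sum of $q'$ is unchanged and its integer part is still $m$, so in fact $q'\in U_m$. For condition \eqref{it:31201}, note that $U_m\subseteq U$, so every accumulation point of $U_m$ is an accumulation point of $U$, and there are finitely many of these. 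I should confirm that this matches the paper's equivalence of ``order type $<\omega^2$'' with ``finitely many accumulation points.'' The subset $U_m$ is well-ordered because it lies in the well-ordered set $U$. Its accumulation points are all limits of increasing sequences, and by boundedness there are finitely many of them. Together these give order type $<\omega^2$.

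Finally, I will apply \Cref{prop:29055} to each $U_m$. This writes $U_m=E_m\sqcup R_{m,1}\sqcup\cdots\sqcup R_{m,s_m}$, where each ray $R_{m,j}$ lies in the admissible set $U_m\subseteq U$. Taking the union over the finitely many $m$ gives the finite set $\bigcup_m E_m$ together with finitely many rays. Rays coming from different $m$ are disjoint because the $U_m$ are disjoint, so all the rays are pairwise disjoint. The main obstacle I expect is the bookkeeping in the first step. I need the uniqueness of the representation $aq=w-0.q_1\cdots$ so that the $U_m$ really partition $U$, and I need the integer part $m$ to be preserved under the zero-insertion operation. Both reduce to the convention that the expansion terminates, which holds because $\sum q_n\leq c$ forces only finitely many nonzero digits.
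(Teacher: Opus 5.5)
Your proposal is correct and follows the paper's proof. You partition $U$ into the pairwise disjoint pieces $U\cap S_{a,b,c,m}$, of which only finitely many (with $-b\le m$ and $m$ bounded via an upper bound of $U$) are nonempty; you note that each piece is $(a,b,c,m,M,N)$-admissible with finitely many accumulation points, and you apply \Cref{prop:29055} to each one. The extra bookkeeping you supply (uniqueness of $m=\lceil aq\rceil$, invariance of $m$ and of the digit sum under zero insertion, and deriving (S2) from condition (2) of \Cref{def:16557} via nonvanishing of the coefficient) just makes explicit what the paper leaves implicit.
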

\begin{proof}
    Since $U$ is bounded, there exist finitely many integers $m_i\geq -b$ such that $U\subseteq \bigcup_i S_{a,b,c,m_i}$. The result follows by applying \Cref{prop:29055} to each $S_{a,b,c,m_i}\cap U$, which is $(a,b,c,m_i,M,N)$-admissible and has finitely many accumulation points, together with the fact that the sets $S_{a,b,c,m_i}\cap U$ are disjoint for different $m_i$.
\end{proof}

\subsection{Sparse representatives and finiteness of bounded QTR supports}\label{sec:24235}
\begin{lemma}\label{lem:57121}
    Let $N\geq 1$ be an integer, and let $\delta_1,\cdots,\delta_r\in (0,1)$ with $r\geq 1$ be rational numbers with finite length base-$p$ expansions. Then the set
    $$W=\left\{\delta_ip^{-Nk}\middle\vert1\leq i\leq r,\ k=0,1,2\cdots\right\}$$
    is sparse.
\end{lemma}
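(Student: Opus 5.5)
The plan is to verify \Cref{def:15658} directly. I will argue through digit vectors in $\bbP$, exactly as in the proof of \Cref{eg:new}, and I will show that condition (2) holds for \emph{every} $n\geq 1$.

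\textbf{Setup.} Let $L\geq 1$ be an integer such that every $\delta_i$ has all its nonzero base-$p$ digits in positions $1,\dots,L$. Multiplying by $p^{-Nk}$ shifts the digit string by $Nk$ places to the right. Hence $\frakN_p(\delta_ip^{-Nk})=\frakN_p(\delta_i)$, and every element of $W$ has finite digit sum. This gives
$$\opn{dom}_p(W)=c\coloneqq\max_i\frakN_p(\delta_i)<\infty,$$
which is condition (1). Since each $\delta_i\in(0,1)$, we also have $c\geq 1$ and $W\neq\{0\}$. Fix an index $i_0$ with $\frakN_p(\delta_{i_0})=c$. Then $\opn{Dom}_p(W)$ contains the infinite family $\delta_{i_0}p^{-Nk}$, $k\geq 0$.

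\textbf{Construction.} Choose an integer $K$ with $NK>L$. Given any $n\geq 1$, set $d_j\coloneqq\delta_{i_0}p^{-NKj}$ for $j=1,\dots,n$. The digit supports of the $d_j$ are the blocks $B_j\coloneqq NKj+\{1,\dots,L\}$. These blocks are pairwise disjoint, and two consecutive blocks are separated by at least $NK-L$ positions. Consequently the digit vectors $\uld^{(j)}$ satisfy $\sum_j\uld^{(j)}\in\bbP$, and adding $d_1,\dots,d_n$ involves no carry. This is condition (2a).

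\textbf{Uniqueness, condition (2b).} Let $e_1,\dots,e_n\in W$ with digit vectors $\underline e^{(j)}$, and suppose $\sum e_j\equiv\sum d_j\pmod{\bfZ}$. As in the proof of \Cref{eg:new}, \Cref{lem:18813} gives
$$nc=\Psi\Bigl(\sum_j\uld^{(j)}\Bigr)=\Psi\Bigl(\tau\bigl(\textstyle\sum_j\underline e^{(j)}\bigr)\Bigr)\leq\Psi\Bigl(\sum_j\underline e^{(j)}\Bigr)=\sum_j\Psi(\underline e^{(j)})\leq nc.$$
Equality must hold throughout. So every $e_j$ has digit sum $c$, the sum $\sum_j\underline e^{(j)}$ lies in $\bbP$, and by uniqueness of $\tau$ we get $\sum_j\underline e^{(j)}=\sum_j\uld^{(j)}$ coordinatewise.

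Each $e_j$ is nonzero, and its digit support lies in a window of length at most $L$. That support is contained in $\bigcup_l B_l$, and the gaps between blocks exceed $L$ because $NK>2L$ (I will take $K$ this large). Hence the support of each $e_j$ lies inside a single block $B_{l(j)}$.

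Now restrict to one block $B_l$. The $e_j$ with $l(j)=l$ sum coordinatewise to the pattern of $d_l$, which has digit sum $c$, while each such $e_j$ also has digit sum $c$. Therefore exactly one $e_j$ lands in each block, and its digit vector equals that of $d_l$. So $e_j=d_{l(j)}$, and $j\mapsto l(j)$ is a permutation of $\{1,\dots,n\}$.

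\textbf{Main obstacle.} The only step needing care is the localisation of each $e_j$ to a single block. It relies on the fact that every element of $W$, not only those built from $\delta_{i_0}$, has a digit support of width at most $L$. The choice $NK>2L$, where $L$ bounds the lengths of all the $\delta_i$, is what makes this localisation go through.
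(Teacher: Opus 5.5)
Your proposal is correct and follows essentially the same route as the paper. Both take copies of a maximal-digit-sum $\delta_{i_0}$ shifted far apart (the paper uses spacing $(2s+1)N$, you use $NK>2L$), use the $\Psi$/$\tau$ inequality chain to force each $e_j$ to have digit sum $c$ and $\sum_j e_j$ to be carry-free and equal to $\sum_j d_j$, and then localize each $e_j$ to a single digit block. Your per-block counting argument (the $e_j$ in $B_l$ sum to the digit pattern of $d_l$, so exactly one lies there and equals $d_l$) spells out the paper's terser final step.
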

\begin{proof}
    Take a large enough integer $s\geq 1$ such that $\delta_i=0.\delta_{i,1}\delta_{i,2}\cdots\delta_{i,s}$ for every $i$. Let
    $$C\coloneqq\max\left\{\sum_{j=1}^s\delta_{i,j}\middle\vert i=1,\cdots,r\right\}$$
    be the maximal digit sum of the $\delta_i$. Since the $\delta_i$ are nonzero, one has $C\geq 1$. Without loss of generality, we assume the digit sum of $\delta_1$ is $C$.

    Fix an integer $n\geq 1$. Set $d_i=\delta_1\cdot p^{-(2s+1)Ni}\in W$ for $i=1,\cdots,n$. Then $d_i\in \opn{Dom}_p(W)$. On the other hand, the exponent $p^{-(2s+1)Ni}$ spreads the digits cluster of different $d_i$ so apart that for any positive integers $i_1<i_2$, one has
    $$\opn{dist}\left(\{j\in\bfZ_{\geq}|\uld^{(i_1)}_j\neq 0\},\{j\in\bfZ_{\geq 1}|\uld^{(i_2)}\neq 0\}\right)>s,$$
    where $\opn{dist}(A,B)\coloneqq \inf_{a\in A, b\in B}\lvert a-b\rvert$ is the distance between two subsets $A, B$ of $\bfQ$. In particular, there is no carry in base $p$ when adding $d_1,\cdots,d_n$ together.

    Suppose $e_1,\cdots,e_n$ are elements of $W$ such that $d_1+\cdots+d_n-(e_1+\cdots+e_n)$ is an integer. Let $\uld^{(i)}$ (resp. $\underline{e}^{(i)}$) be the element in $\bbP$ such that $\norm{\uld^{(i)}}=d_i$ and $\norm{\underline{e}^{(i)}}=e_i$. Then
    \begin{equation}\label{eq:14517}
        \frakN_p\left(\sum_i e_i\right)=\Psi\left(\tau\left(\sum_i \underline{e}^{(i)}\right)\right)\leq \Psi\left(\sum_i \underline{e}^{(i)}\right)=\sum_i \Psi\left(\underline{e}^{(i)}\right)\leq n\cdot C=\frakN_p\left(\sum_i d_i\right).
    \end{equation}
    Since $d_1+\cdots+d_n-(e_1+\cdots+e_n)$ is an integer, we know that $\frakN_p\left(\sum_i e_i\right)=\frakN_p\left(\sum_i d_i\right)$ and consequently all inequalities in \Cref{eq:14517} are equalities. In particular, one has $\frakN_p(e_i)=\Psi(\underline{e}^{(i)})=C$ for each $i$, and there is no carry in base $p$ when adding $e_1,\cdots,e_n$ together. As a result, we know that $\sum_i d_i=\sum_i e_i$.

    Fix $l\in\bfZ_{\geq 1}$. Since $\opn{diam}(\{j\in\bfZ_{\geq 1}|\underline{e}^{(l)}_j\neq 0\})$ does not exceed $s$, we conclude that the set
    $$\left\{i\middle\vert\exists w\in\bfZ_{\geq 1},\ \uld^{(i)}_w\neq 0\wedge \underline{e}^{(l)}_w\neq 0\right\}$$
    has at most one element, i.e. the digits of $e_l$ overlap with those of $d_i$ for at most one $i$.

    On the other hand, the conditions
    \begin{enumerate}[label=(\arabic*),ref=\textup{(\arabic*)}]
        \item there is no carry in base $p$ when adding $d_1,\cdots,d_n$ (resp. $e_1,\cdots,e_n$) together;
        \item $d_1,\cdots,d_n$ (resp. $e_1,\cdots,e_n$) have the same digit sum $C$;
        \item $\sum_i d_i=\sum_i e_i$;
    \end{enumerate}
    ensure that $e_l$ is identical to certain $d_i$. We conclude that $e_1,\cdots,e_n$ are just a permutation of $d_1,\cdots,d_n$. This shows that $W$ is sparse.
\end{proof}

\begin{lemma}\label{lem:42556}
    Let $f\in\bfL_p$ be a $\breve{\bfQ}_p$-algebraic $p$-adic Hahn series such that the support is bounded and admits finitely many (at least one) accumulation points. Then there exists $S'\subseteq \opn{Supp}(f)$, $\lambda\in\bfQ$ and an integer $T\geq 1$ such that $\opn{Supp}(f)\backslash S'$ is a finite set and $-T(S'-\lambda)$ admits a sparse nonzero set of representatives modulo $\bfZ$.
\end{lemma}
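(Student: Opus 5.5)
By \Cref{prop:167}, the algebraicity of $f$ over $\breve{\bfQ}_p$ together with bounded support makes $F_f$ QTR with respect to some data $(a,b,c,M,N)$. Set $U\coloneqq\opn{Supp}(f)$. Since $U$ is bounded and has finitely many accumulation points, \Cref{coro:48108} gives a decomposition $U=E\sqcup R_1\sqcup\cdots\sqcup R_s$, with $E$ finite and the $R_l$ pairwise disjoint rays. Each $R_l$ lies in an admissible subset for some $m_l\geq -b$.

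Since $U$ has at least one accumulation point, $s\geq 1$. Otherwise $U$ would be finite, because every accumulation point of $U$ arises as the limit of a ray (\Cref{rem:ray-progression}). In the notation of \Cref{lem:59667}, write
$$R_l=\left\{\tfrac{1}{a}\left(m_l-\alpha_l-\lambda_l p^{-Nk}\right)\middle\vert k\geq 0\right\},$$
where $\alpha_l\in[0,1)$ and $\lambda_l\in(0,1)$ have finite base-$p$ expansions.

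The plan is to keep only the rays that share a single limit class modulo $\frac{1}{a}\bfZ$. Pick one ray, say $R_1$, and take $\lambda\coloneqq\frac{1}{a}(m_1-\alpha_1)$. Let $S'$ be the union of all rays $R_l$ with $\frac{1}{a}(m_l-\alpha_l)\equiv\lambda \pmod{\frac{1}{a}\bfZ}$, that is, $\alpha_l=\alpha_1$. With this choice $U\setminus S'$ is not finite whenever other classes occur, so this naive version fails. The fix is to shift the problem: the set $-a(S'-\lambda)$ is congruent modulo $\bfZ$ to $\{\lambda_lp^{-Nk}\}$, and this is sparse by \Cref{lem:57121}. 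For several classes $\alpha_l$, the elements $-a(x-\lambda)$ for $x\in R_l$ are $\equiv \alpha_l-\alpha_1+\lambda_lp^{-Nk}$. For $k$ large these have base-$p$ digits equal to those of $(\alpha_l-\alpha_1)\bmod 1$ followed by a far-shifted copy of $\lambda_l$.

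I therefore take $T=a$, take $S'$ to be all tails of the rays (the rest is finite), and set $\lambda=0$, or any convenient value. The set of representatives is then
$$W=\left\{\beta_l+\lambda_lp^{-Nk}\right\},\qquad \beta_l\coloneqq(-m_l+\alpha_l)\bmod 1 .$$
For $k$ large the two digit clusters do not interact, so adding them produces no carries.

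The main work is to verify sparseness of this $W$ directly, adapting the proof of \Cref{lem:57121}. Choose $C=\max_l(\frakN_p(\beta_l)+\frakN_p(\lambda_l))$ and pick a ray attaining it. The obstacle is that all $d_i$ share the same prefix $\beta_1$, so summing $n$ of them makes the digits of $\beta_1$ carry once $n\geq p$. Condition (2a) of \Cref{def:15658} then breaks for all large $n$. To avoid this I would choose $\lambda$ so that the shared prefix is removed, i.e. so that only the pure tails $\lambda_lp^{-Nk}$ remain. This works only for rays with $\beta_l=\beta_1$; rays in other classes then contribute representatives $(\beta_l-\beta_1)+\lambda_lp^{-Nk}$. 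The digit sum of such a representative is at least $\frakN_p(\lambda_l)$, and I expect to handle this by choosing $\beta_1$ to be a class whose pure tail $\lambda_1$ has \emph{maximal} digit sum among all tails, arranged so that the shifted $d_i$ still dominate.

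Concretely, the orphan argument runs the chain of inequalities \Cref{eq:14517}, which forces every $e_i$ to have digit sum exactly $C$ and forces the sum of the $e_i$ to be carry-free. The $e_i$ from foreign classes carry the extra digits of $(\beta_l-\beta_1)$ in the leading positions. Those positions are empty in $\sum_i d_i$, so equality of the sums excludes such $e_i$. The rest then proceeds exactly as in \Cref{lem:57121}. The step I expect to be most delicate is this bookkeeping of the foreign classes together with the choice of $\lambda$ that avoids carries in the prefix. If it fails, I would fall back to replacing $S'$ by a single class of rays and handling the other classes through the fact that the transcendence argument of \Cref{thm:23791} only needs one dominant class.
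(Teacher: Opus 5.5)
There is a genuine gap. You keep $T=a$ and try to absorb the rays with a different limit class ($\alpha_l\neq\alpha_1$) by choosing $\lambda$ and doing digit-sum bookkeeping. This cannot work in general. With $T=a$, an element of $R_l$ has representative $\epsilon_l+\lambda_lp^{-Nk}$, where the prefix satisfies $\epsilon_l\equiv\alpha_l+a\lambda\pmod{\bfZ}$.

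These prefixes are pairwise distinct for distinct $\alpha_l$, so at most one class has $\epsilon_l=0$. For large $k$, a foreign element has digit sum $\frakN_p(\epsilon_l)+\frakN_p(\lambda_l)$, and this can exceed every pure-tail digit sum. When it does, $\opn{Dom}_p(W)$ consists, up to finitely many elements, of elements that reproduce one of finitely many nonzero prefixes. By pigeonhole, any $n$ of them produce a carry once $n$ is large, so condition (2a) of \Cref{def:15658} fails for all large $n$.

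Concretely, take $a=1$ and the rays $\{-p^{-1-Nk}\}$ and $\{-p^{-1}-p^{-2-Nk}\}$.
\begin{itemize}
\item With $\lambda=0$ you get $\opn{Dom}_p(W)=\{p^{-1}+p^{-2-Nk}\}$, and any $p$ of these carry in the first digit.
\item With $\lambda=-p^{-1}$ you get $\opn{Dom}_p(W)=\{1-p^{-1}+p^{-1-Nk}\}_{k\geq1}$, and any two of these carry.
\item By the argument above, every other $\lambda$ fails as well.
\end{itemize}
Both tails have digit sum $1$ here, so picking the class with maximal tail does not justify your step ``arranged so that the shifted $d_i$ still dominate''. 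Your fallback of keeping a single class leaves infinitely many points outside $S'$. But \Cref{thm:29075} needs $\opn{Supp}(f)\setminus S'$ to be finite, so that $f_0$ is algebraic.

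The missing idea is to use the freedom in $T$, not just in $\lambda$. The limit points $\Lambda_l\coloneqq a^{-1}(m_l-\alpha_l)$ lie in $a^{-1}\bfZ[1/p]$. So you can choose $T$ with $a\mid T$ and $T(\Lambda_l-\Lambda_1)\in\bfZ$ for all $l$, for example $T=ap^L$ with $L\gg0$.

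Now take $\lambda=\Lambda_1$. For $q\in R_l$,
\[
-T(q-\Lambda_1)=-T(\Lambda_l-\Lambda_1)+\tfrac{T}{a}\lambda_l\,p^{-Nk}\equiv\tfrac{T}{a}\lambda_l\,p^{-Nk}\pmod{\bfZ},
\]
so all the prefixes vanish modulo $\bfZ$ at once. The rescaled tails $\gamma_l\coloneqq\frac{T}{a}\lambda_l$ may be $\geq1$. So discard the first $K$ points of each ray, which is a finite deletion, to get $\delta_l\coloneqq\gamma_lp^{-NK}\in(0,1)$. The resulting representatives $\{\delta_lp^{-Nj}\}$ are sparse by \Cref{lem:57121} exactly as stated, and no new carry analysis is needed.

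In the example above, $T=p$ sends the two rays to $\{p^{-Nk}\}_{k\geq1}\cup\{p^{-1-Nk}\}_{k\geq0}$ modulo $\bfZ$. The $k=0$ point of the first ray becomes an integer and is dropped.
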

\begin{proof}
    \Cref{prop:167} implies that the coefficient function $F_f$ of $f$ is QTR for certain $(a,b,c,M,N)\in \bfZ_{>0}\times \bfN\times \bfN\times \bfZ_{>0}\times \bfZ_{>0}$. This implies that $\opn{Supp}(f)\subseteq S_{a,b,c}$. Since $\opn{Supp}(f)$ is bounded, there exists a finite index set $I\subseteq \bfZ\cap [-b,\infty)$ such that $\opn{Supp}(f)\subseteq\bigcup_{m\in I} S_{a,b,c,m}$. For every $m\in I$, we set $S_m=\opn{Supp}(f)\cap S_{a,b,c,m}$. Then $S_m$ is $(a,b,c,m,M,N)$-admissible. By \Cref{prop:29055}, $S_m$ can be written as the pairwise disjoint union of a finite set and finitely many rays in $S_m$. Since the rays are also disjoint across different $m$, we set $S''=\bigsqcup_{l=1}^s R_l$ to be the (nonempty) union of all rays in $S_m$ for all $m\in I$. Then $\opn{Supp}(f)\backslash S''$ is a finite set, and $S''$ is a disjoint union of finitely many rays. For each ray $R_l$, we write
    $$R_l=\left\{a^{-1}\left(m_l-\alpha_l-\beta_l p^{-Nk}\right)\middle\vert k\in\bfN\right\},$$
    where $m_l\geq -b$ is an integer and $\alpha_l\in [0,1), \beta_l\in (0,1)$ are rational numbers with finite base-$p$ expansions. Note that $R_l$ has rational limit point $\lambda_l\coloneqq a^{-1}(m_l-\alpha_l)$.

    Choose an integer $T\geq 1$ such that $a$ divides $T$ and $T(\lambda_l-\lambda_1)\in\bfZ$ for every $l=1,2,\cdots,s$. Then for any $l=1,\cdots,s$ and any $q=a^{-1}(m_l-\alpha_l-\beta_l p^{-Nk})\in R_l$,
    $$-T(q-\lambda_1)=-T(\lambda_l-\lambda_1)+\frac{T}{a}\beta_l p^{-Nk}\equiv \gamma_l p^{-Nk}\pmod{\bfZ},$$
    where $\gamma_l=\frac{T}{a}\beta_l$ is a nonzero rational number. Since $\frac{T}{a}$ is an integer and $\beta_l$ has finite base-$p$ expansion, $\gamma_l$ has finite base-$p$ expansion, and hence $\gamma_l\in\bfZ[1/p]$. Choose a large enough integer $K\geq 0$ such that $0<\gamma_l p^{-NK}<1$ for every $l=1,\cdots,s$. We set $\delta_l=\gamma_l p^{-NK}$ for every $l=1,\cdots,s$.

    We delete the first $K$ points of each ray in $S''$ and denote the resulting set by $S'$, i.e.
    $$S'=\left\{a^{-1}\left(m_l-\alpha_l-\beta_l p^{-N(j+K)}\right)\middle\vert l=1,\cdots,s,\ j\in\bfN\right\}.$$
    Since this is a finite deletion, $\opn{Supp}(f)\backslash S'$ is still a finite set. On the other hand, for any $q=a^{-1}(m_l-\alpha_l-\beta_l p^{-N(j+K)})\in S'$, one has
    $$-T(q-\lambda_1)\equiv \delta_l p^{-Nj} \pmod{\bfZ}.$$
    Conversely, for any $l=1,\cdots,s$ and any $j\in\bfN$, there exists $q=a^{-1}(m_l-\alpha_l-\beta_l p^{-N(j+K)})\in S'$ such that
    $$-T(q-\lambda_1)\equiv \delta_l p^{-Nj}\pmod{\bfZ}.$$
    Thus the elements $\delta_l p^{-Nj}$ for $l=1,\cdots,s$ and $j=1,2,\cdots$, after deleting the duplicates, form a nonzero set of representatives of $-T(S'-\lambda_1)$ modulo $\bfZ$, which is sparse by \Cref{lem:57121}.
\end{proof}

\begin{theorem}\label{thm:29075}
    Let $f$ be a $p$-adic Hahn series satisfying the following conditions:
    \begin{enumerate}
        \item $f$ is algebraic over $\breve{\bfQ}_p$;
        \item $\opn{Supp}(f)$ is bounded and admits finitely many accumulation points.
    \end{enumerate}
    Then $\opn{Supp}(f)$ is a finite set.
\end{theorem}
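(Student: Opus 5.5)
We argue by contradiction. Suppose $\opn{Supp}(f)$ is infinite. Since it is bounded and well-ordered, it has at least one accumulation point, and by hypothesis only finitely many. So \Cref{lem:42556} applies and gives $S'\subseteq\opn{Supp}(f)$ with finite complement, a rational $\lambda$, and an integer $T\geq 1$ such that $-T(S'-\lambda)$ admits a sparse nonzero set of representatives modulo $\bfZ$. The plan is to turn this into a series satisfying the hypotheses of \Cref{thm:23791}. That theorem would make the series transcendental over $\breve{\bfQ}_p$, contradicting algebraicity.

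Concretely, I would write $f=f_{\mathrm{fin}}+f'$, where
$$f_{\mathrm{fin}}=\sum_{q\in\opn{Supp}(f)\setminus S'}[f(q)]p^q,\qquad f'=\sum_{q\in S'}[f(q)]p^q.$$
These are the standard expansions of the two pieces, and they are well defined since both supports are well-ordered subsets of $\opn{Supp}(f)$.

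The finite part $f_{\mathrm{fin}}$ is a finite sum of terms $[x]p^{q}$ with $q\in\bfQ$. Each such term is algebraic over $\breve{\bfQ}_p$, since $([x]p^q)^{D}\in\breve{\bfQ}_p$ for a denominator $D$ of $q$. Hence $f'=f-f_{\mathrm{fin}}$ is algebraic over $\breve{\bfQ}_p$.

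Next set $g\coloneqq p^{-\lambda}f'$, where $p^{-\lambda}$ is the Hahn series $[1]p^{-\lambda}$. Multiplying by $p^{-\lambda}$ shifts exponents without touching the Teichmüller coefficients, because $[a][b]=[ab]$ and $p^{q}p^{-\lambda}=p^{q-\lambda}$ hold in the quotient. So $\opn{Supp}(g)=S'-\lambda$ and $-T\cdot\opn{Supp}(g)=-T(S'-\lambda)$. Moreover $p^{-\lambda}$ is algebraic over $\breve{\bfQ}_p$, so $g$ is algebraic over $\breve{\bfQ}_p$.

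By construction, $-T\cdot\opn{Supp}(g)$ has a sparse set $W\neq\{0\}$ of representatives modulo $\bfZ$. Then \Cref{thm:23791} says $g$ is transcendental over $\breve{\bfQ}_p$, a contradiction. Therefore $\opn{Supp}(f)$ is finite.

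The main step to check carefully is the support computation: that subtracting the finitely supported $f_{\mathrm{fin}}$ and multiplying by $p^{-\lambda}$ change the standard expansion exactly as claimed. For the subtraction, the two pieces have disjoint supports, and by the uniqueness of standard expansions in \Cref{prop:49318} the expansion of $f$ is their juxtaposition, so no Witt-vector carries occur. For the multiplication, the point is that multiplying by a single monomial $[1]p^{-\lambda}$ is a shift on standard expansions; I would verify this on the lift in $\breve{\bfZ}_p\pparen{t^\bfQ}$, where it is multiplication by $t^{-\lambda}$, which preserves $\calN$.
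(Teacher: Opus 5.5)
Your proof is correct and follows the paper's argument essentially verbatim: you argue by contradiction, invoke \Cref{lem:42556} to split $\opn{Supp}(f)$ into a finite part and $S'$, deduce that the $S'$-part is algebraic, shift by $p^{-\lambda}$ so the support becomes $S'-\lambda$, and apply \Cref{thm:23791}. Your extra checks fill in details the paper leaves implicit, namely the existence of an accumulation point by Bolzano--Weierstrass, the algebraicity of the monomials $[x]p^q$, and the support computations via the lift $\sum[f(q)]t^q$ in $\breve{\bfZ}_p\pparen{t^\bfQ}$.
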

\begin{proof}
    Suppose that $\opn{Supp}(f)$ is an infinite set. Then there exists at least one accumulation point of $\opn{Supp}(f)$.

    By \Cref{lem:42556}, we can write $\opn{Supp}(f)=W\sqcup S'$, where $W$ is a finite set, and $S'$ is an infinite set such that $-T(S'-\lambda)$ admits a nonzero sparse set of representatives modulo $\bfZ$ for some $\lambda\in\bfQ$ and integer $T\geq 1$. We write $f=f_0+f_1$ according to the decomposition $\opn{Supp}(f)=W\sqcup S'$, i.e., $f_0$ (resp. $f_1$) is the $p$-adic Hahn series consisting of the terms of $f$ with support in $W$ (resp. $S'$). Since $W$ is a finite set, $f_0$ is algebraic over $\breve{\bfQ}_p$. This forces $f_1$, and consequently $f_1\cdot p^{-\lambda}$, to be algebraic over $\breve{\bfQ}_p$. Note that the support of $f_1\cdot p^{-\lambda}$ is just $S'-\lambda$, and \Cref{thm:23791} implies that $f_1\cdot p^{-\lambda}$ is transcendental over $\breve{\bfQ}_p$, a contradiction.
\end{proof}

\begin{corollary}\label{coro:11594}
    Let $f\in\bfL_p$ be a $p$-adic algebraic number. Then $\opn{Supp}(f)$ has either no accumulation points or infinitely many accumulation points. In other words, its order type is either finite or no less than $\omega^2$.
\end{corollary}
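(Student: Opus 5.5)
The plan is to reduce \Cref{coro:11594} to \Cref{thm:29075} by truncation. Let $f$ be algebraic over $\bfQ_p$, hence over $\breve{\bfQ}_p$. Suppose $\opn{Supp}(f)$ has finitely many accumulation points but at least one; we will derive a contradiction. By \Cref{rmk:37294}, every accumulation point is rational.

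First I pick an integer $r$ such that every accumulation point lies strictly below $r$. I then split $f=f_{\leq r}+f_{>r}$ according to whether the exponent is $\leq r$ or $>r$. The key claim is that $f_{\leq r}$ is algebraic over $\breve{\bfQ}_p$. Its support is bounded and has the same accumulation points as $\opn{Supp}(f)$, because nothing accumulates at or near $r$. \Cref{thm:29075} would then force $\opn{Supp}(f_{\leq r})$ to be finite, contradicting the existence of an accumulation point.

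The algebraicity of the truncation is the main obstacle. My first approach is to use the remark after \Cref{thm:47} identifying $\bfC_p$ inside $\bfL_p$. Since $f\in\bfC_p$, there is some $g\in\overline{\bfQ}_p$ with $v_p(f-g)>r$, so $f$ and $g$ have identical coefficients on $(-\infty,r]$. Consequently $f_{\leq r}=g_{\leq r}$. This does not yet give algebraicity of $f_{\leq r}$, only that the truncation is realized by a genuinely algebraic element.

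I will therefore avoid needing algebraicity of the truncation at all, by going back into the proof of \Cref{thm:29075}. Three ingredients are needed there.
\begin{enumerate}
\item \Cref{prop:167} is applied to obtain QTR for $F_f$ on $(-\infty,r]$. That proof only uses agreement with an element of the completed integral closure on a bounded-above region, together with \Cref{lem:36014}. So it applies to $g$: $F_g$ restricted to $(-\infty,r]$ is QTR, and this restriction equals $F_f$ there.
\item The ray decomposition (\Cref{coro:48108}, \Cref{lem:42556}) then applies to $\opn{Supp}(f)\cap(-\infty,r]$. It produces $S'\subseteq\opn{Supp}(f)\cap(-\infty,r]$ with finite complement in that set, together with a sparse set of representatives of $-T(S'-\lambda)$.
\item Instead of the decomposition $f=f_0+f_1$ with $f_0$ of finite support, I write $f=f_0+f_1+f_{>r}$.
\end{enumerate}

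The new term $f_{>r}$ is the delicate point, because it is not of finite support. I would handle it by choosing $r$ generically, so that $\opn{Supp}(f)\cap(r-1,\infty)$ has no accumulation points locally. Alternatively, one can apply \Cref{thm:29075} directly to $g$ in place of $f$. Here $g\in\overline{\bfQ}_p$, and its support has accumulation points exactly matching those of $f$ below $r$. We also need $g$ to have bounded support with finitely many accumulation points; in particular, algebraic elements have support with $\min$ bounded below and accumulation points tending to $+\infty$.

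If $\opn{Supp}(g)$ turns out to be unbounded, I truncate $g$ at $r$ and use the fact that $g-g_{\leq r}$ has valuation greater than $r$. Combining this with the sparseness argument of \Cref{thm:23791}, specialized at an exponent $q_0\le r$, the tail contributes nothing to the coefficient identity \Cref{eq:7609} at $q_0$ once $r$ is large relative to $n\cdot\max$ of the support. This localized version of \Cref{thm:23791} is the step I would expect to require the most care.
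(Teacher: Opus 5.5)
Your proposal has a genuine gap at exactly the step you flagged: the tail $f_{>r}$. Items 1 and 2 are fine. The proof of \Cref{prop:167} together with \Cref{lem:36014} makes $F_f|_{(-\infty,r]}$ QTR for every integer $r$, and the ray decomposition then applies. The problem is what comes next. The proof of \Cref{thm:29075} feeds \Cref{thm:23791} an \emph{algebraic} series whose entire support is the union of the rays. In your decomposition $f=f_0+f_1+f_{>r}$, nothing makes $f_1$ algebraic, and choosing $r$ ``generically'' does not change this.

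Your claim that $f_{\leq r}$ is algebraic is in fact false in general. The series $\frakA=\sum_{k\geq 1}p^{-1/p^k}$ lies in $\bfC_p$, because its $\Theta$-preimage is a root of $X^p-X-t^{-1}$ and so \Cref{thm:47} applies. Hence some $g\in\overline{\bfQ}_p$ has $v_p(g-\frakA)>0$, and then $g_{\leq 0}=\frakA$ is transcendental.

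Your approximation detour gives nothing new: $f$ is itself algebraic, so one can take $g=f$. Applying \Cref{thm:29075} to $g$ therefore meets the same unbounded support.

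The ``localized'' \Cref{thm:23791} also fails as sketched. \Cref{eq:7609} at $q_0$ is the $T$-null condition for the whole coset $q_0+\frac{1}{T}\bfZ$, summed over all $w\in\bfZ$. Monomials that mix tail exponents with ray exponents do land in that coset, so the tail does contribute. Also, ``$n\cdot\max$ of the support'' is meaningless when the support is unbounded.

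The most you could hope for is that the tail contributions have larger valuation than the orphaned term. That requires the cut $r$ to exceed roughly $v_p(a_n)+v_p(n!)+n\lambda_{\max}-(n-1)\min\opn{Supp}(f)$, where $\lambda_{\max}$ is the last accumulation point. The support points between $\lambda_{\max}$ and $r$ then have two possible fates, and both fail:
\begin{enumerate}
\item Subtract them off. This changes $P$, and its degree can grow as $r$ grows, so the required cut moves with it.
\item Keep them as extra representatives. This destroys the uniqueness in \Cref{lem:unique-phi-at-q0}.
\end{enumerate}
Nothing in your proposal breaks this circularity.

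The underlying issue is that the corollary is meant under the bounded-support hypothesis of \Cref{thm:16129}; the formalized version carries \texttt{IsBounded f.support}. Without that hypothesis the order-type sentence is simply false, which your proposal never addresses. For example, $1/(1-p)=\sum_{n\geq 0}p^n\in\bfQ_p$ has support $\bfN$, whose order type is $\omega$.

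With boundedness, no truncation is needed and the argument is immediate:
\begin{enumerate}
\item Being $\bfQ_p$-algebraic implies being $\breve{\bfQ}_p$-algebraic.
\item By \Cref{thm:29075}, finitely many accumulation points force finite support, hence no accumulation points at all.
\item For a bounded well-ordered set of rationals, having no accumulation point means being finite.
\item An order type $\omega\cdot k+m<\omega^2$ permits at most $k$ accumulation points, which gives the order-type formulation.
\end{enumerate}
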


\appendix
\section{AI-assisted formalization in Lean 4}\label{sec:formal}
\subsection{Overview of the formalization project}
The formalization of this paper, which we refer to as the \texttt{FormalizedSparse} project, contains approximately 24,000 lines of Lean code (including docstrings) and is organized into the following files:
\subsubsection{\texttt{WittVector.lean}}
This file contains our realization of $\breve{\bfQ}_p$, the completed maximal unramified extension of $\bfQ_p$, and its ring of integers $\breve{\bfZ}_p$.

Although Mathlib already contains a formalization of $\bfQ_p$, the lack of infrastructure for ramification theory in Mathlib makes it difficult to define $\breve{\bfQ}_p$ literally as the union of all finite unramified extensions of $\bfQ_p$. Instead, we start with $\breve{\bfZ}_p$, which we define as the ring of Witt vectors over $\overline{\bfF}_p$:
\begin{leancode}
-- The algebraic closure of `𝔽ₚ`
abbrev Fpbar (p : ℕ) [Fact (Nat.Prime p)] := AlgebraicClosure (ZMod p)
notation "𝔽ᵃ_[" p "]" => Fpbar p

-- The ring of integers of completed maximal unramified extension of `ℚₚ`,
abbrev OQpUn (p : ℕ) [Fact (Nat.Prime p)] := WittVector p (Fpbar p)
notation "ℤᵘⁿ_[" p "]" => OQpUn p
\end{leancode}
Then we define $\breve{\bfQ}_p$ as the fraction field of $\breve{\bfZ}_p$, with the induced valuation:
\begin{leancode}
abbrev QpUn (p : ℕ) [Fact (Nat.Prime p)] :=
  WithVal ((IsDiscreteValuationRing.maximalIdeal (ℤᵘⁿ_[p])).valuation 
    ((FractionRing (ℤᵘⁿ_[p]))))
notation "ℚᵘⁿ_[" p "]" => QpUn p
\end{leancode}

We establish several \lean{instance} around \lean{ℚᵘⁿ_[p]}, such as the fact that it is a complete rank-$1$ valued field. In addition, we define the embedding from $\bfQ_p$ to $\breve{\bfQ}_p$: one has $\bfZ_p\cong W(\bfF_p)$, which injects into $\breve{\bfZ}_p\coloneqq W(\overline{\bfF}_p)$. This extends to an embedding $\bfQ_p\hookrightarrow \breve{\bfQ}_p$:
\begin{leancode}
-- The embedding from ℚ_[p] to ℚᵘⁿ_[p].
noncomputable def Qp_embd {p : ℕ} [Fact (Nat.Prime p)] : ℚ_[p] →+* ℚᵘⁿ_[p] :=
  @IsFractionRing.map ℤ_[p] ℤᵘⁿ_[p] ℚ_[p] ℚᵘⁿ_[p] _ _ _ _ _ _ _ _ _
    ((WittVector.map (algebraMap (ZMod p) (𝔽ᵃ_[p]))).comp
      (WittVector.fromPadicInt p)) ... -- The map $\bfZ_p\lto\breve{\bfZ}_p$ is injective. Omitted.
\end{leancode}
Finally, we formalize a lemma to show that this embedding preserves the valuation.

\subsubsection{\texttt{PAdicHahnSeries.lean}}
This file contains the formalization of $\bfL_p$, the field of $p$-adic Hahn series. The material is mostly taken from \cite{poonenMAXIMALLYCOMPLETEFIELDS1993}.

Since the equal-characteristic Hahn series is already available in Mathlib, we formalize the ring $\breve{\bfZ}_p\pparen{t^\bfQ}$ as:
\begin{leancode}
abbrev LiftedPAdicHahnSeries (p : ℕ) [Fact (Nat.Prime p)] := HahnSeries ℚ (ℤᵘⁿ_[p])
\end{leancode}

We define the null series condition (cf. \Cref{prop:49318}) as a predicate \lean{IsNullSeries} on \lean{LiftedPAdicHahnSeries}, and define the ideal $\calN$ as the set of null series:
\begin{leancode}
def NullSeriesIdeal (p : ℕ) [Fact (Nat.Prime p)] : Ideal (LiftedPAdicHahnSeries p) where
  carrier := {x | IsNullSeries x}
  ... -- Omitted.
\end{leancode}
We provide an \lean{instance} to show that $\calN$ is a maximal ideal of $\breve{\bfZ}_p\pparen{t^\bfQ}$, so that the quotient $\breve{\bfZ}_p\pparen{t^\bfQ}/\calN$ is a field (cf. (1) of \Cref{prop:49318}).

Instead of directly defining $\bfL_p$ as the quotient of $\breve{\bfZ}_p\pparen{t^\bfQ}$ by $\calN$, we first formalize (2) of \Cref{prop:49318}:
\begin{leancode}
-- Every element of $\breve{\bfZ}_p\pparen{t^\bfQ}/\calN$ has a unique lift of the form $\sum_{q\in\bfQ}[f(q)]t^q$, 
-- with the support a well-ordered set.
theorem exists_canonical_expansion {p : ℕ} [Fact (Nat.Prime p)] :
  ∀ A : (LiftedPAdicHahnSeries p) ⧸ (NullSeriesIdeal p),
    ∃! (s : {f : ℚ → 𝔽ᵃ_[p] // (Function.support f).IsPWO}),
      Ideal.Quotient.ringCon (NullSeriesIdeal p)
      A.out (LiftedPAdicHahnSeries.from_coeff s.val s.prop)
\end{leancode}
This allows us to define a valuation on $\breve{\bfZ}_p\pparen{t^\bfQ}/\calN$ by considering the minimum of the support of this unique lift:
\begin{leancode}
def val (p : ℕ) [Fact (Nat.Prime p)] :
  AddValuation ((LiftedPAdicHahnSeries p) ⧸ (NullSeriesIdeal p)) (WithTop ℚ) where
  toFun x :=
    if h : x = 0 then (⊤ : WithTop ℚ) --If $x=0$, then its valuation is $\infty$.
    else ((support_IsPWO x).isWF.min (support_nonempty_of_nonzero p x h) : WithTop ℚ)
  ... -- Omitted.
\end{leancode}
And finally we define $\bfL_p$ as the fraction field of $\breve{\bfZ}_p\pparen{t^\bfQ}/\calN$:
\begin{leancode}
abbrev pAdicHahnSeries (p : ℕ) [Fact (Nat.Prime p)] : Type _ :=
  (LiftedPAdicHahnSeries p) ⧸ (NullSeriesIdeal p)
notation "𝕃_[" p "]" => pAdicHahnSeries p
\end{leancode}

Several facilities around $\bfL_p$, such as its support and the coefficients (as a function of type \lean{ℚ → 𝔽ᵃ_[p]}), are also formalized in this file.

To deliver the $\bfQ_p$-transcendence and $\breve{\bfQ}_p$-transcendence results, we provide the embedding from $\breve{\bfQ}_p$ to $\bfL_p$. It is induced by the embedding from $\breve{\bfZ}_p$ to $\bfL_p$, which maps an element $w$ to the image of $w\cdot t^0$ in the quotient $\breve{\bfZ}_p\pparen{t^\bfQ}/\calN$.
\begin{leancode}
def ZpUn_embd {p : ℕ} [Fact (Nat.Prime p)] : ℤᵘⁿ_[p] →+* 𝕃_[p] where
  toFun a := Ideal.Quotient.mk (NullSeriesIdeal p) (HahnSeries.single 0 a)
  ... -- Omitted.

def QpUn_embd {p : ℕ} [Fact (Nat.Prime p)] : ℚᵘⁿ_[p] →+* 𝕃_[p] :=
  IsFractionRing.map (j := ZpUn_embd (p := p)) ZpUn_embd_injective
\end{leancode}

Finally, we show by induction on the cardinality of the support that an element of $\bfL_p$ with finite support must be algebraic over $\bfQ_p$:
\begin{leancode}
lemma alg_of_fin_supp (p : ℕ) [Fact (Nat.Prime p)] (f : 𝕃_[p]) 
  (hf : f.support.Finite) : IsAlgebraic ℚ_[p] f
\end{leancode}

\subsubsection{\texttt{Tscaled.lean}}
This file corresponds to \Cref{sec:tscaled} of this paper. We use \lean{ℤᵘⁿ_[p,T]} (resp. \lean{ℚᵘⁿ_[p,T]}) to denote the ring $\breve{\bfZ}_{p,T}$ (resp. $\breve{\bfQ}_{p,T}$), and use \lean{𝕃_[p,T]} to denote the quotient of $\breve{\bfZ}_{p,T}\pparen{t^\bfQ}$ by $\calN_T$. After we formalize \Cref{lem:14623} and \Cref{lem:21624}, the isomorphism $\sigma$ in \Cref{prop:18592} can be delivered:
\begin{leancode}
def σ : 𝕃_[p] ≃+* 𝕃_[p,T] := ... -- Omitted.

-- $\text{\normalfont\Cref{prop:18592}}$, $\sigma$ is given by $\sum_{q\in\bfQ}[f(q)]p^q\mapsto \sum_{q\in\bfQ}[f(q)]p^q$.
theorem σ_coeff_compat (f : 𝕃_[p]) : (σ p T f).coeff = f.coeff
\end{leancode}

\subsubsection{\texttt{Sparse.lean}}
This file corresponds to \Cref{def:25951}, \Cref{def:15658} and \Cref{sec:sparse} of this paper, which is about the sparseness, $(c,n)$-sparseness and the related infrastructure.

Fortunately, Mathlib already contains the function \lean{Real.digits}, which extracts the digits of a real number in a given base. This allows us to formalize the $p$-digit sum of a rational number, and consequently $\opn{dom}_p(S)$ and $\opn{Dom}_p(S)$ of a set $S\subseteq\bfQ$ with relatively little effort:
\begin{leancode}
abbrev decDigits (p : ℕ) [Fact (Nat.Prime p)] (q : ℚ) : ℕ+ → Fin p :=
  fun n => Real.digits (Int.fract q) p ((n : ℕ) - 1)

/- The `p-digit sum`, `𝔑ₚ(q)` in $\text{\normalfont\Cref{def:25951}}$ (1)-/
def pDigitSum (p : ℕ) [Fact (Nat.Prime p)] (q : ℚ) : WithTop ℕ :=
  if h : (Function.support (decDigits p q)).Infinite then ⊤
  else ∑ n ∈ (Set.not_infinite.1 h).toFinset, (decDigits p q n).val

/- `dominant p-digit sum` of S, $\text{\normalfont\Cref{def:25951}}$ (2)-/
def dom (p : ℕ) [Fact (Nat.Prime p)] (S : Set ℚ) : WithTop ℕ :=
  sSup {pDigitSum p  q | q ∈ S}

/- `p-digit dominant part` of S, $\text{\normalfont\Cref{def:25951}}$ (2)-/
def Dom (p : ℕ) [Fact (Nat.Prime p)] (S : Set ℚ) : Set ℚ :=
  {q ∈ S | pDigitSum p q = dom p S}
\end{leancode}

With these preparations, we present the formalized version of \Cref{def:15658}:
\begin{leancode}
/- The sparse condition, as a predicate -/
def IsSparse (p : ℕ) [Fact (Nat.Prime p)] (S : Set ℚ) : Prop :=
  S ⊆ Set.Ico 0 1 ∧ dom p S < ⊤ ∧
  ∃ D : Set ℕ+, D.Infinite ∧ ( ∀ n ∈ D, ∃ d : Fin n → Dom p S, (
      ∀ i : ℕ+, ∑ (j : Fin n), (decDigits p (d j) i).val < p
      -- No carrying when adding d₁, d₂, ..., dₙ together.
    ) ∧ ( ∀ e : Fin n → Dom p S,
      (∑ i, (d i).val -∑ i, (e i).val).isInt →
        ∃ perm : Equiv.Perm (Fin n), ∀ i, d i = e (perm i)
      -- Unique up to a permutation of $\{1,\cdots n\}$.
    ) )
\end{leancode}

Since \Cref{sec:sparse} contains mostly implementation-level details, we will not demonstrate most of the formalization here, except for \Cref{eg:new}: the $p$-digit disjoint subset of $[0,1)\cap\bfQ$ is sparse:
\begin{leancode}
lemma IsSparse_of_digit_disjoint (p : ℕ) [Fact (Nat.Prime p)] (A : ℕ → Set ℕ+)
(hA1 : ∀ n, (A n).Nonempty) (hA2 : ∀ i j, (A i) ∩ (A j) ≠ ∅ → i = j)
(hA3 : ∀ n, (A n).Finite)
(hAsup : ∃ K : ℕ, (∀ n, (hA3 n).toFinset.card ≤ K) ∧
          {n | (hA3 n).toFinset.card = K}.Infinite) :
  IsSparse p {∑ r ∈ (hA3 i).toFinset, (p : ℚ) ^ (-(r: ℤ)) | i : ℕ }
\end{leancode}

\subsubsection{\texttt{MainTheorem.lean}}
The single objective of this file is to formalize the proof of \Cref{thm:main}:
\begin{leancode}
theorem main_theorem (p : ℕ) [Fact (Nat.Prime p)] (f : 𝕃_[p]) (T : ℕ+)
(W : Set ℚ) (hW1 : W ≠ {0}) (hW2 : IsSparse p W)
(hf : IsRepModZ W {-1 * T * q | q ∈ f.support}) :
  ¬ IsAlgebraic ℚᵘⁿ_[p] f
\end{leancode}
Here \lean{IsRepModZ} is a predicate to express the condition that $W$ is a set of representatives of $-T\cdot\opn{Supp}(f)$ modulo $\bfZ$.

A variant of this theorem, which replaces the algebraicity over $\breve{\bfQ}_p$ by the algebraicity over $\bfQ_p$, is also formalized in this file.

\subsubsection{\texttt{QuasiTwistRecurrent.lean}}
This file corresponds to \Cref{sec:5330}. The key concept is that of a quasi-twist-recurrent (QTR) function (\Cref{def:16557}). Throughout, the base-$p$ digit string $0.q_1\cdots q_n\cdots=\sum_{i\geq 1}q_ip^{-i}$ is modeled by a finitely supported \lean{d : ℕ →₀ ℕ}, where \lean{d i} is the digit $q_{i+1}$. It is implemented as a predicate as follows:
\begin{leancode}
def IsQTR {p : ℕ} [Fact (Nat.Prime p)] (x : ℚ → 𝔽ᵃ_[p])
(a : ℕ+) (b c : ℕ) (M N : ℕ+) : Prop :=
  (Function.support x).IsWF ∧ -- The support is a well-ordered subset of $\mathbf{Q}$.
  (Function.support x ⊆ Kedlaya.Sabc p a b c) ∧ -- The support is contained in $S_{a,b,c}$.
  -- The $M$-zero-run ⇒ $N$-zero-insertion recurrence.
  ∀ (w : ℤ), -(b : ℤ) ≤ w → ∀ (d : ℕ →₀ ℕ), (∀ i, d i < p) →
      (d.sum fun _ v => v) ≤ c →
    ∀ (k : ℕ), (∀ i, k ≤ i → i < k + (M : ℕ) → d i = 0) →
      x ((1 / (a : ℚ)) * ((w : ℚ) - d.sum fun i v => 
        (v : ℚ) * (p : ℚ) ^ (-(i + 1 : ℤ)))) = x ((1 / (a : ℚ)) * ((w : ℚ) -
          (Finsupp.mapDomain (fun i => 
            if i < k + (M : ℕ) then i 
            else i + (N : ℕ)) d).sum fun i v => (v : ℚ) * (p : ℚ) ^ (-(i + 1 : ℤ))))
\end{leancode}
Here \lean{Kedlaya.Sabc p a b c} is the set $S_{a,b,c}$ of \Cref{thm:28713}, and the insertion of $N$ zeros into a length-$M$ zero gap is realized by \lean{Finsupp.mapDomain}. Building on Kedlaya's criterion (\Cref{thm:28713,thm:47}), the main result of this file is the necessary QTR condition for bounded $\breve{\bfQ}_p$-algebraic $p$-adic Hahn series (\Cref{prop:167}):
\begin{leancode}
theorem isQTR_of_isAlgebraic_of_bddSupport {p : ℕ} [Fact (Nat.Prime p)]
(f : 𝕃_[p]) (halg : IsAlgebraic ℚᵘⁿ_[p] f) (hbdd : IsBounded f.support) :
  ∃ (a : ℕ+) (b c : ℕ) (M N : ℕ+), IsQTR f.coeff a b c M N
\end{leancode}

\subsubsection{\texttt{RayDecomposition.lean}}
This file corresponds to \Cref{sec:23337}. The central concept is that of a ray (\Cref{def:33761}). Using the same finsupp digit model as above, a ray with base element \lean{d_base} is obtained by inserting $kN$ zeros at a fixed gap position \lean{shift_pos} for $k=0,1,2,\cdots$. It is formalized as a predicate as follows:
\begin{leancode}
def IsRay (p : ℕ) [Fact (Nat.Prime p)] (a : ℕ+) (c : ℕ) (m : ℤ) (N : ℕ+) (S : Set ℚ) 
(R : Set ℚ) : Prop := R ⊆ S ∧
  ∃ (d_base : ℕ →₀ ℕ) (shift_pos : ℕ),
    (∀ i, d_base i < p) ∧ (d_base.sum fun _ v => v) ≤ c ∧
    R = { q : ℚ | ∃ k : ℕ,
      q = (1 / (a : ℚ)) * ((m : ℚ) -
        (Finsupp.mapDomain
          (fun i => if i < shift_pos then i else i + k * (N : ℕ)) d_base).sum
          fun i v => (v : ℚ) * (p : ℚ) ^ (-(i + 1 : ℤ))) }
\end{leancode}
The main result of this file is the ray decomposition of bounded QTR sets (\Cref{coro:48108}): a bounded QTR set with finitely many accumulation points is a union of a finite set and finitely many pairwise disjoint rays. Accumulation points are formalized via \lean{derivedSet}, and \lean{rays.sup id} denotes the union of the finite family \lean{rays}:
\begin{leancode}
/- Ray decomposition of bounded QTR sets -/
theorem qtr_ray_decomposition {p : ℕ} [Fact (Nat.Prime p)]
{a : ℕ+} {b c : ℕ} {M N : ℕ+} {x : ℚ → 𝔽ᵃ_[p]}
(hqtr : IsQTR x a b c M N) (hbdd : Bornology.IsBounded (Function.support x))
(hfin_acc : (derivedSet (Function.support x)).Finite) :
  ∃ (E : Finset ℚ) (rays : Finset (Set ℚ)),
    (∀ R ∈ rays, ∃ (m : ℤ) (hm : -(b : ℤ) ≤ m),
      IsRay p a c m N (Function.support x) R ∧
        IsAdmissible p a (b : ℤ) c m hm M N (Function.support x ∩ Sabc_m p a c m)) ∧
      (∀ R₁ ∈ rays, ∀ R₂ ∈ rays, R₁ ≠ R₂ → Disjoint R₁ R₂) ∧
      Function.support x = ↑E ∪ rays.sup id
\end{leancode}
Here \lean{IsAdmissible} and \lean{Sabc_m} are the admissibility conditions (S1)--(S3) and the $m$-slice $S_{a,b,c,m}$ from \Cref{sec:23337}, whose formalization we omit.

\subsubsection{\texttt{BoundedSupport.lean}}
This file corresponds to the subsection \Cref{sec:24235}. Combining the ray decomposition with the sparseness of the explicit witness set (the formalized version of \Cref{lem:57121}) and the main transcendence theorem \lean{main_theorem} (\Cref{thm:23791}), we formalize the finiteness of bounded QTR supports (\Cref{thm:29075}):
\begin{leancode}
theorem finite_support_of_qpun_algebraic_of_bounded_support 
{p : ℕ} [Fact (Nat.Prime p)]
(f : 𝕃_[p]) (hf1 : IsAlgebraic ℚᵘⁿ_[p] f) (hf2 : IsBounded f.support)
(hf3 : (derivedSet ((Rat.cast : ℚ → ℝ) '' f.support)).Finite) :
  f.support.Finite
\end{leancode}
And consequently \Cref{coro:11594}:
\begin{leancode}
-- $\bfQ_p$-algebraic + bounded support ⇒ $\opn{card}\{\text{accumulation points}\}=$ $0$ or $\infty$.
theorem support_accpt_empty_or_infinite_of_qp_algebraic_of_bounded_support
{p : ℕ} [Fact (Nat.Prime p)] 
(f : 𝕃_[p]) (hf1 : IsAlgebraic ℚ_[p] f) (hf2 : IsBounded f.support) :
  (derivedSet ((Rat.cast : ℚ → ℝ) '' f.support)) = ∅ ∨
  (derivedSet ((Rat.cast : ℚ → ℝ) '' f.support)).Infinite

-- The order type variant.
theorem order_type_of_qp_algebraic_of_bounded_support {p : ℕ} [Fact (Nat.Prime p)]
    (f : 𝕃_[p]) (hf1 : IsAlgebraic ℚ_[p] f) (hf2 : IsBounded f.support) :
    typeLT f.support < omega0 ∨ typeLT f.support ≥ omega0^2
\end{leancode}

We also formalize a literal version of \Cref{thm:16129} in this file, which we omit here for brevity.

\subsubsection{\texttt{HuangStefanescu.lean}}
This file contains the formalization of two quick corollaries of \Cref{thm:16129}: \Cref{coro:3344} and \Cref{prop:31772}.
\begin{leancode}
-- $\bfQ_p$-algebraic + strictly monotone support ⇒ unbounded support
theorem tendsto_atTop_of_strictMono_support_of_qpun_algebraic
{p : ℕ} [Fact (Nat.Prime p)] (s : ℕ → ℚ) (hs : StrictMono s) (f : 𝕃_[p])
(hf1 : IsAlgebraic ℚᵘⁿ_[p] f) (hf2 : f.support = Set.range s) :
  Tendsto s atTop atTop

-- The $p$-adic analogue of the result of Huang and Stefanescu.
theorem padic_huang_stefanescu_tfae (p : ℕ) [Fact (Nat.Prime p)] 
(f : 𝕃_[p]) (hf : f.support ⊆ {-(p : ℚ) ^ (-(i : ℤ)) | i : ℕ+}) :
  List.TFAE [f.support.Finite, IsAlgebraic ℚᵘⁿ_[p] f, IsAlgebraic ℚ_[p] f]
\end{leancode}

\subsubsection{\texttt{Kedlaya.lean}}
This file records the deep results of Kedlaya (\Cref{thm:28713,thm:47}) that we use as black boxes. Reproving them is outside the scope of this formalization, so their statements are given here and their proofs are the only intentional \lean{admit}s in the project. The shared dependency is the support set $S_{a,b,c}$ of \Cref{thm:28713}, where the base-$p$ digit sequence is again modeled by a finsupp \lean{d : ℕ →₀ ℕ}:
\begin{leancode}
/- $\text{\normalfont\cite[Definition 2.1]{kedlayaAlgebraicityGeneralizedPower2017b}}$ -/
def Sabc (a : ℕ+) (b c : ℕ) : Set ℚ :=
  { s : ℚ | ∃ (n : ℤ) (d : ℕ →₀ ℕ),
    -b ≤ n ∧ (∀ i, d i < p) ∧ (d.sum fun _ v => v) ≤ c ∧
    s = (1 / (a : ℚ)) *
      ((n : ℚ) - d.sum fun i v => (v : ℚ) * (p : ℚ) ^ (-(i + 1 : ℤ))) }
\end{leancode}
The integrality criterion (\Cref{thm:28713}) is then stated as follows, where \lean{(𝔽ᵃ_[p])⸨X⸩} is the Laurent series field $\overline{\bfF}_p\pparen{t}$; note that integrality and algebraicity coincide over a field, so we phrase downstream results in terms of algebraicity:
\begin{leancode}
/- $\text{\normalfont\cite[Theorem 11.11]{kedlayaAlgebraicityGeneralizedPower2017b}, \cite[Theorem 15]{kedlayaAlgebraicClosurePower2001}}$ -/
theorem kedlaya_2001a_theorem15 (x : HahnSeries ℚ (𝔽ᵃ_[p])) :
  IsIntegral (𝔽ᵃ_[p])⸨X⸩ x ↔ ∃ a : ℕ+, ∃ b c : ℕ,
    (x.support ⊆ Sabc p a b c) ∧ -- $x$ is supported on some $S_{a,b,c}$.   
    (∃ M N : ℕ+, ...) -- the twist functions $f_m$ are eventually periodic, omitted.
    := by admit
\end{leancode}
The second black box is Kedlaya's description of the completed integral closure of $\breve{\bfQ}_p$ in $\bfL_p$ (\Cref{thm:47}), which is the input to \Cref{prop:167}:
\begin{leancode}
/- $\text{\normalfont\cite[Theorem 13.4]{kedlayaAlgebraicityGeneralizedPower2017b}}$ -/
theorem kedlaya_2017_theorem13_4 :
  closure (integralClosure ℚᵘⁿ_[p] 𝕃_[p]).carrier =
  closure { f : 𝕃_[p] | ∃ f' : HahnSeries ℚ (𝔽ᵃ_[p]), IsAlgebraic 𝔽ᵃ_[p] f' ∧
    (exists_canonical_expansion f).choose.val = f'.coeff } := by admit
\end{leancode}

\subsection{AI-assisted formalization}
The agentic auto-formalization system Archon, which is based on Claude Opus 4.8 and developed by the AI4Math team at BICMR, Peking University, greatly accelerated our formalization process. By design, once the blueprint or the corresponding informal proof of the project is provided, Archon works fully autonomously to complete the project-level informal-to-formal translation of the statements (as well as mathematical definitions) and to formalize the proof.

\subsubsection{Good translation of the statements and definitions}
Specific to our project, our experience suggests that it is better to write all the formalized definitions and statements by a human mathematician with solid experience in Lean\footnote{In the \texttt{FormalizedSparse} project, we formalized all the definitions and statements by hand.}, and then let Archon fill in the proofs. While Archon is capable of independently formalizing mathematical concepts, it, like other artificial intelligence systems, lacks adequate mathematical intuition to formulate definitions in a manner that facilitates their subsequent application.

For example, at the beginning of the project, we let Archon formalize $\breve{\bfQ}_p$, the completed maximal unramified extension of $\bfQ_p$. As we observed, Archon made considerable efforts to formalize the definition of $\breve{\bfQ}_p$ literally, but without success. In fact, the mathematical insight here is that the ramification information is not what is actually needed. Instead, the key property of $\breve{\bfQ}_p$ that is repeatedly used in this project is that every element of $\breve{\bfQ}_p$ can be uniquely written as a $p$-adic Laurent series whose coefficients are Teichmüller representatives. This is the perfect scenario for applying the ring of Witt vectors, which is already formalized in Mathlib.

\subsubsection{Automatic formalization: proof and revision}
After the formalized statements and definitions are settled\footnote{We also built up several helper results (with \lean{sorry}) that we expected to be useful during the formalization process.}, we let Archon fill in the proofs without supervision. The results are quite satisfactory: Archon is able to complete most proofs in a way that closely follows the informal proof, and the generated code (as well as the docstrings) is mostly readable. We learned several things from this process:
\begin{enumerate}
    \item During its work, the formalization by Archon helped us find several mistakes (which are now fixed) and subtleties in the original version of the informal proof:
    \begin{enumerate}
        \item \Cref{thm:main} does not hold for the sparse set $W=\{0\}$.
        \item We did not require the supremum in \Cref{eg:61947} to be attained by infinitely many $i$ in the original version, which is also necessary for the proof to work.
        \item As we mentioned at the beginning of \Cref{sec:tscaled}, some elements in $\bfL_p$ were casually written in the form $\sum_{q\in [0,1/T)}c_q p^q$, with $c_q\in\breve{\bfQ}_{p,T}$ for every $q$. This is not a well-defined element of $\bfL_p$, unless we consider the $T$-scaled realization (i.e., \lean{𝕃_[p,T]}) of $\bfL_p$ and the isomorphism $\sigma$ in \Cref{prop:18592}.
        \item The phrase ``accumulation points'' could be ambiguous for a set of rational numbers (cf. \Cref{rmk:37294}).
    \end{enumerate}
    These fragility, all of which have been fixed in the current version, are related to the technical details of the proof, and would not have been easy to find without the help of formalization. Archon recorded these mistakes in the provisional docstrings. For the first two mistakes, Archon added the necessary condition by itself and continued the formalization process without any human intervention. For the other two issues, Archon failed to complete the formalization and terminated with a detailed report.
    \item Archon's ability to backtrack is impressive. When formally proving \Cref{thm:main}, which is highly combinatorial, we gave no hint to Archon about the structure of the proof, except for the informal proof itself. Archon made multi-level plans to divide the proof into several lemmas and assemble them to complete the proof. During the work of Archon, we observed that some of its intermediate lemmas were incorrect, and Archon was able to backtrack and revise the proof plan to fix the mistake without any human intervention.
\end{enumerate}

\subsection{How formalization helps mathematical research}
For mathematicians with limited experience in formalization, agent-based systems such as Archon may eventually provide a practical way to validate proofs in a largely black-box manner. One can envision a future workflow in which a paper written in natural language is automatically translated into a formal proof object and subsequently verified by the system after extensive computation.

For mathematicians with some experience in formalization, we believe that mathematical research can benefit substantially from a human-in-the-loop workflow. In such a workflow, researchers formulate definitions and statements of intermediate results in formal language, while AI systems assist with the labor-intensive formalization process. Successfully formalized intermediate results then provide verified foundations for subsequent arguments, while failed formalizations may help detect errors at an early stage of the research process.

\printbibliography
\end{document}